\newtheorem{theorem}{Theorem}[section]
\newtheorem{proposition}[theorem]{Proposition}
\newtheorem{lemma}[theorem]{Lemma}
\newtheorem{result}[theorem]{Result}
\newtheorem{rem}[theorem]{Remark}
\def\cC{\mathcal C}
\def\cD{\mathcal D}
\def\cE{\mathcal E}
\def\cQ{\mathcal Q}
\def\cP{\mathcal P}
\def\cX{\mathcal X}
\def\l{\ell}
\def\Aut{\mbox{\rm Aut}}
\def\K{\mathbb{K}}
\def\PG{{\rm{PG}}}
\def\ord{\mbox{\rm ord}}
\def\deg{\mbox{\rm deg}}
\def\Aut{\mbox{\rm Aut}}
\def\supp{\mbox{\rm Supp}}
\def\gg{\mathfrak{g}}
\newcommand{\PSL}{\mbox{\rm PSL}}
\newcommand{\PGL}{\mbox{\rm PGL}}
\newcommand{\AGL}{\mbox{\rm AGL}}
\newcommand{\PSU}{\mbox{\rm PSU}}
\newcommand{\PGU}{\mbox{\rm PGU}}
\newcommand{\SU}{\mbox{\rm SU}}
\newcommand{\Sz}{\mbox{\rm Sz}}
\newcommand{\aut}{\mbox{\rm Aut}}
\newcommand{\diag}{\mbox{\rm diag}}
\newcommand{\vf}{\varphi}
\def\supp{{\rm Supp}}
\newcommand{\ha}{{\textstyle\frac{1}{2}}}
\newcommand{\bA}{{\bf A}}
\newcommand{\bS}{{\bf S}}
\date{}
\begin{document}

\title{Curves with more than one inner Galois point}
\date{}
\author{G\'abor Korchm\'aros, Stefano Lia and Marco Timpanella}

\maketitle

\vspace{0.5cm}\noindent {\em Keywords}:
Algebraic curves, algebraic function fields, positive characteristic, automorphism groups.
\vspace{0.2cm}\noindent

\vspace{0.5cm}\noindent {\em Subject classifications}:
\vspace{0.2cm}\noindent  14H37, 14H05.


\begin{abstract}

 Let $\cC$ be an irreducible plane curve of $\PG(2,\mathbb{K})$ where $\mathbb{K}$ is an algebraically closed field of characteristic $p\geq 0$. A point $Q\in \cC$ is an inner Galois point for $\cC$ if the projection $\pi_Q$ from $Q$ is Galois. Assume that $\cC$ has two different inner Galois points $Q_1$ and $Q_2$, both simple. Let $G_1$ and $G_2$ be the respective Galois groups. Under the assumption that $G_i$ fixes $Q_i$, for $i=1,2$, we provide a complete classification of $G=\langle G_1,G_2 \rangle$ and we exhibit a curve for each such $G$. Our proof relies on deeper results from group theory.
\end{abstract}
\maketitle
    \section{Introduction}
In this paper, $\cX$ stands for a (projective, geometrically irreducible, non-singular) algebraic curve defined over an algebraically closed field $\mathbb{K}$ of characteristic $p\geq 0$. Also, $\cC$ stands for  a plane model of $\cX$, that is, for a plane curve $\cC$ defined over $\mathbb{K}$ and birationally equivalent to $\cX$. Let $\varphi$ be a morphism $\cX \mapsto \PG(2,\mathbb{K})$ which realizes it, so that $\varphi$ is  birational onto its image $\cC$. Further, $\mathbb{K}(\cX)$ denotes the function field of $\cX$, and $\aut(\cX)$ stands for the automorphism group of $\cX$ which fixes $\mathbb{K}$ element-wise. A point $Q$ in $PG(2,\mathbb{K})$ is a \emph{Galois point} for $\cC$ if the projection $\pi_Q$ from $Q$ is Galois; more precisely, if the field extension $\mathbb{K}(\cX)/\pi^*_Q(\mathbb{K}(\PG(1,\mathbb{K}))$ is Galois.
In this case,  if $G$ is the Galois group which realizes $\pi_Q$, then $Q$ \emph{is a Galois point with Galois group} $G$.
A Galois point $Q$ is either {\emph{inner}} or {\emph{outer}} according as $Q\in \cC$ or $Q\in PG(2,\mathbb{K})\setminus \cC$.
An inner Galois point may be a singular point of $\cC$.

The concept of a Galois point is due to H. Yoshihara and dates back to late 1990s; see \cite{yu}. Ever since, several papers have been dedicated to studies on Galois points, especially on the number of Galois points of a given plane curve. For non-singular plane curves, that number is already known \cite{fuka1,yu}. Nevertheless, for plane models with singularities the picture is much more involved, as it emerges from several recent papers \cite{fuka1,fuka2,fukas,fuka3,fukahase, FukaHiga,fukasp,hom,miu,yu1} where the authors focused on the problem of determining plane curves with at least two Galois points.

In this context, our paper is about plane models $\cC$ of $\cX$ with two different inner Galois points $\varphi(P_1)$ and $\varphi(P_2)$ both simple, or more generally unibranch. Here $\cC$ is \emph{unibranch} at its point $Q$ if $\varphi(P)=\varphi(R)=Q$ implies $P=R$.

Let $\varphi(P_1),\varphi(P_2)\in \cC$ be two different inner Galois points with Galois groups $G_1$ and $G_2$ respectively. Then
\begin{itemize}
\item[(I)] The quotient curves $\cX/G_1$ and $\cX/G_2$ are rational.
\end{itemize}
From now on we assume that $G_i$ fixes $P_i$, for $i=1,2$.
By Lemma \ref{profuka1} (see also \cite{fukas}), $\varphi(P_1)$ and $\varphi(P_2)$ are simple if the following two properties hold.
\begin{itemize}
\item[(II)] $G_1$ and $G_2$ have trivial intersection.
\item[(III)] In the divisor group of $\cX$,
$P_1+\sum_{\sigma\in G_1}\sigma(P_2)=P_2+\sum_{\tau\in G_2}\tau(P_1).$
\end{itemize}
Since (I),(II),(III) are independent of the model $\cC$, general properties of inner Galois points can be obtained by investigating curves $\cX$ with two subgroups $G_1,G_2\in \aut(\cX)$ satisfying (I),(II) and (III) with $|\supp(D|>2$. In this paper we go in that direction pursuing the strategy of using not only function field theory but also deeper results from group theory. Our starting point is to look inside the action of $G=\langle G_1,G_2\rangle$ on the support $\Omega$ of the divisor in (III). Lemma \ref{profuka1} shows that the action of $G_i$ on $\Omega\setminus\{P_i\}$ is sharply transitive, and hence $G$  induces on $\Omega$ a doubly transitively permutation group. Furthermore, a $1$-point stabilizer of $G$ is solvable.
It should be noticed that some non-trivial element of $G$ may fix $\Omega$ pointwise. In other words, the kernel $K$ of the permutation representation $\bar{G}$ of $G$ on $\Omega$ may be non-trivial so that $\bar{G}=G/K$ is the doubly transitive permutation group induced by $G$ on $\Omega$.
Since all doubly transitive permutation groups with solvable $1$-point stabilizer have been classified in 1970's by Holt \cite{holt} and O'Nan \cite{onan},  this gives a chance to determine the possibilities for $\bar{G}$  and then recover $G$ from $\bar{G}$ using Schur multipliers. In this strategy, an important simplification is that $G_1$ is a normal subgroup of the stabilizer of $P_1$ in $G$. Also, a natural idea is to regard $G$ as a doubly transitive group space on $\Omega$ where $G_1$ is a normal subgroup of a $1$-point stabilizer of $G$ and $G_1$ is sharply transitive on the remaining points of $\Omega$. Such doubly transitive group spaces were completely determined by Hering \cite{her}. It turns out that Hering's result provides a complete list of possibilities for $G$ and its action on $\Omega$. The question of which of these possibilities actually occur for some curve $\cX$ is completely answered in our main theorem.
\begin{theorem}
\label{th1}  Let $\cC$ be a plane model of $\cX$  associated with the morphism $\varphi: \cX\mapsto PG(2,\mathbb{K})$. Let $P_1,P_2\in \cX$ be two distinct points together with two distinct  subgroups $G_1,G_2$ of $\aut(\cX)$ such that $\varphi(P_1)$ and $\varphi(P_2)$ are simple Galois points of $\cC$ with Galois groups $G_1$ and $G_2$, respectively. If $G_i$ fixes $P_i$ for $i=1,2$ then $G=\langle G_1, G_2\rangle$ is isomorphic to one of the following groups:
\begin{itemize}
\item[(i)] $\PSL(2,q), {\rm{SL}}(2,q), Sz(q), \PSU(3,q), {\rm{SU}}(3,q), Ree(q)$ where $q$ is a power of $p$, and  $\deg(\cC)$ equals $q+1$ in the linear case, $q^{2}+1$ in the Suzuki case and $q^{3}+1$ in the unitary and Ree case. Here $G$ is supposed to be non-solvable.
\item[(ii)] $\rm{P\Gamma L}(2,8)$, $p=3$, and $\deg(\cC)=28$.
\item[(iiia)] ${\rm{AGL}}(1,m)$ for a prime power $m$ of $p$, $\deg(\cC)=m$, and $\cX$ is rational.
\item[(iiib)] ${\rm{AGL}}(1,3)$, $p\neq 3$, $\deg(\cC)=3$,  and $\cX$ is rational.
\item[(iiic)] ${\rm{AGL}}(1,4)$, $p\neq 2$, $\deg(\cC)=4$,  and $\cX$ is rational.
\item[(iva)] ${\rm{AGL}}(1,m)$, for $m=3,4,5,7$, $p\neq 2,3$, $\deg(\cC)=m$ and $\cX$ is elliptic.
\item[(ivb)] ${\rm{AGL}}(1,m)$, for $m=3,4,5,7$, $p=3$, $\deg(\cC)=m$, and $\cX$ is elliptic.
\item[(ivc)] ${\rm{PSU}}(3,2)$, $p=2$, $|\Omega|=9$, and $\cX$ is elliptic.
\item[(ivd)] ${\rm{AGL}}(1,m)$, for $m=3,5,7$, $p=2$, $\deg(\cC)=m$, and $\cX$ is elliptic.
\item[(ive)] $(C_5\times C_5)\times {\rm{SL}}(2,3)$, for $p=2$, $\deg(\cC)=25$, and $\cX$ is elliptic.
\item[(va)] ${\rm{SU}}(3,2)$, $p=2$, and $\gg(\cX)=10$.
\item[(vb)] ${\rm{SL(2,3)}}$, $p\neq 2,3$ and $\gg(\cX)=3$.
\end{itemize}
\end{theorem}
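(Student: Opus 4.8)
The plan is to convert the hypotheses into a question about the permutation action of $G$ on the finite set $\Omega$, resolve that question group-theoretically, and only then return to geometry to decide which abstract possibilities are actually carried by a curve.

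First I would fix the combinatorial data. Using (III), put $D=P_1+\sum_{\sigma\in G_1}\sigma(P_2)$ and $\Omega=\supp(D)$. By Lemma \ref{profuka1} each $G_i$ fixes $P_i$ and acts sharply transitively on $\Omega\setminus\{P_i\}$, so the image $\bar G\le \Sym(\Omega)$ of $G=\langle G_1,G_2\rangle$ is doubly transitive, every one-point stabilizer $\bar G_P$ is solvable, and $\bar G_1\trianglelefteq \bar G_{P_1}$ is sharply transitive on $\Omega\setminus\{P_1\}$. In other words $(\bar G,\Omega)$ is a doubly transitive \emph{group space} in the sense of Hering. Since $G_1$ is sharply transitive off $P_1$ and the kernel $K=\ker(G\to\bar G)$ fixes $\Omega$ pointwise, one gets $K\cap G_1=K\cap G_2=1$, and together with (II) this expresses $G$ as generated by two copies of a sharply transitive normal complement inside a point stabilizer.

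Second I would invoke the classification. Feeding the solvability of $\bar G_P$ into the Holt--O'Nan list \cite{holt,onan} of doubly transitive groups with solvable point stabilizer, and intersecting with Hering's determination \cite{her} of doubly transitive group spaces, forces $\bar G$ together with its degree $|\Omega|$ into a finite explicit list: the one-dimensional almost simple groups $\PSL(2,q)$, $\Sz(q)$, $\PSU(3,q)$, $\Ree(q)$ in their natural $2$-transitive actions, the group $\mathrm{P\Gamma L}(2,8)$ of degree $28$, the affine groups $\AGL(1,m)$, and a handful of small-degree affine exceptions. The third step is the lift from $\bar G$ to $G$. Because $K$ meets neither $G_1$ nor $G_2$ while $G$ is generated by these two point-transitive complements, the extension $1\to K\to G\to\bar G\to 1$ is tightly constrained: $K$ is forced into the centre and is governed by the Schur multiplier of $\bar G$. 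This is exactly what produces the pairs $\mathrm{SL}(2,q)$ versus $\PSL(2,q)$ and $\mathrm{SU}(3,q)$ versus $\PSU(3,q)$ in (i), and the structure $(C_5\times C_5)\rtimes \mathrm{SL}(2,3)$ in (ive).

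It remains to decide realizability, and this is where the main difficulty lies. For each surviving pair $(G,\Omega)$ I would ask whether some $\cX$ carries $G\le\aut(\cX)$ with $G_1,G_2$ satisfying (I), (II), (III). The necessary conditions feed back strongly: (I) forces $\cX/G_i$ rational, the relation $\deg\cC=|\Omega|=|G_1|+1$ fixes the degree of the plane model, and a Riemann--Hurwitz computation for $\cX\to\cX/G$ then pins down $\gg(\cX)$, distinguishing the rational, elliptic, and higher-genus cases (iiia)--(vb). The non-solvable entries in (i) are realized by the classical curves --- the line for $\PSL(2,q)$, the Suzuki curve for $\Sz(q)$, the Hermitian curve for $\PSU(3,q)$, the Ree curve for $\Ree(q)$ --- where $G$ already acts with the required orbit on $\Omega$. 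The hard part is twofold: ruling out those candidates on the group-theoretic list that no curve can realize, since (I) and (III) impose constraints invisible to the abstract permutation group; and, for the small affine and low-genus cases in characteristics $2$ and $3$, computing the genus correctly. There Riemann--Hurwitz alone is insufficient because ramification is wild, and one must control the higher ramification groups (equivalently, apply the Deuring--Shafarevich formula) to obtain the exact values, such as $\gg(\cX)=10$ for $\mathrm{SU}(3,2)$ in (va) and $\gg(\cX)=3$ for $\mathrm{SL}(2,3)$ in (vb). Exhibiting an explicit curve for each realizable $G$, and verifying that both Galois points are simple on it, closes the argument.
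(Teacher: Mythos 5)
Your overall strategy coincides with the paper's: reduce to the doubly transitive action of $G$ on $\Omega$, apply the Holt--O'Nan and Hering classifications, and then use curve theory to decide realizability. However, there are three concrete gaps. First, to apply Hering's theorem (Result \ref{HC}) you need $G_1$ to be a \emph{normal} subgroup of the stabilizer $G_{P_1}$; you assert $\bar G_1\trianglelefteq \bar G_{P_1}$ as if it followed from Lemma \ref{profuka1}, but that lemma only gives sharp transitivity of $G_1$ on $\Omega\setminus\{P_1\}$. In the paper this normality is Theorem \ref{cond*}, and its proof is not formal: it requires first showing (Lemma \ref{prop1jan}) that the kernel $K$ is a central, cyclic, prime-to-$p$ group whose order divides $\deg(\cC)$ --- an argument carried out geometrically, by showing the elements of $K$ act on the plane model as homologies with a common centre --- and then invoking Propositions \ref{pro3jan} and \ref{pro24jan}, which in turn rest on the Holt--O'Nan lists together with the rational or elliptic quotient structure. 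Your parallel claim that ``$K$ is forced into the centre'' because $K\cap G_1=K\cap G_2=1$ is not a proof; centrality of $K$ does not follow from that intersection condition alone.

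Second, Hering's theorem classifies $S$, the normal closure of $G_1$ in $G$, not $G$ itself. The paper devotes a separate subsection to proving $S=G$, using the second halves of Lemmas \ref{lem11jan}--\ref{lem11Bjan} (on cyclic complements in the semilinear groups) and the uniqueness of the sharply doubly transitive subgroup in the affine cases; your proposal never addresses this, so as written it only classifies a normal subgroup of $G$. Third, the exclusion work in Hering's case (iv) --- where $|\Omega|=d^2$ for $d\in\{3,5,7,11,23,29,59\}$ and one must whittle this down to $d=3$, i.e.\ $S\cong{\rm{SU}}(3,2)$ with $\gg(\cX)=10$ --- is only gestured at. The elimination of $d=5$ alone occupies a substantial higher-ramification computation (the filtration $S_2^{(i)}$ controlled via Result \ref{lem11.75(i)}, followed by two applications of the Hurwitz formula ending in the contradiction $100\ge 116$), and nothing in your sketch indicates how you would carry this out; the same applies to pinning down $\gg(\cX)=3$ in case (vb). These exclusions are where the theorem is actually proved, so the proposal, while correctly oriented and structurally faithful to the paper, is incomplete as a proof.
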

All the above cases occur, see Section \ref{esempi}. A corollary of Theorem \ref{th1} is the following result.
\begin{theorem}
\label{th2} Under the hypotheses of Theorem \ref{th1}, if $p\nmid |G_1|$, in particular if $p=0$ or $p>2\gg(\cX)+1$, then $\cX$ is either rational or elliptic, or it has genus $3$.
\end{theorem}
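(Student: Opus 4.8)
The plan is to obtain Theorem~\ref{th2} as a direct corollary of Theorem~\ref{th1}, by singling out in the list (i)--(vb) exactly those entries for which $p\nmid |G_1|$ and recording the genus of $\cX$ in each. The bookkeeping rests on the identity
\[|G_1|=|\Omega|-1=\deg(\cC)-1,\]
which is immediate from Lemma~\ref{profuka1}: as $\varphi(P_1)$ is a simple inner Galois point, the projection $\pi_{P_1}$ has degree $\deg(\cC)-1$, and $G_1$ acts sharply transitively on the $\deg(\cC)-1$ points of $\Omega\setminus\{P_1\}$. Thus for every item of Theorem~\ref{th1} the divisibility of $|G_1|$ by $p$ is read off from the recorded value of $\deg(\cC)$ (or of $|\Omega|$), bearing in mind that $\Omega$ is the set on which $\bar G=G/K$ acts, $K$ being the kernel of the permutation representation.

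First I would discard every entry with $p\mid|G_1|$. In the non-solvable case (i) one has $\deg(\cC)\in\{q+1,\,q^2+1,\,q^3+1\}$, hence $|G_1|\in\{q,\,q^2,\,q^3\}$, a power of $p$; in (ii), $|G_1|=27$ with $p=3$. Next $|G_1|=8$ in (ivc) and (va), $|G_1|\in\{2,4,6\}$ in (ivd), $|G_1|=24$ in (ive), all with $p=2$; $|G_1|\in\{3,6\}$ in the subcases $m=4,7$ of (ivb) with $p=3$; and $|G_1|=2$ in (iiib) when $p=2$, $|G_1|=3$ in (iiic) when $p=3$. In all of these $p\mid|G_1|$, so the hypothesis $p\nmid|G_1|$ rules them out. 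The surviving entries are exactly (iiia) together with (iiib),(iiic) for $p\neq 2,3$, where $\cX$ is rational; (iva) together with the subcases $m=3,5$ of (ivb), where $\cX$ is elliptic; and (vb), where $G={\rm SL}(2,3)$ induces $A_4$ on $|\Omega|=4$ points, so $|G_1|=3$, $p\neq 2,3$, and $\gg(\cX)=3$. Hence $p\nmid|G_1|$ forces $\cX$ to be rational, elliptic, or of genus $3$, which is the first assertion.

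It remains to verify the ``in particular'' clause, namely that $p=0$ or $p>2\gg(\cX)+1$ implies $p\nmid|G_1|$. For $p=0$ there is nothing to prove. Assume $p>0$ and, for contradiction, $p\mid|G_1|$; by Cauchy's theorem $G_1\leq\aut(\cX)$ then contains an automorphism $\sigma$ of order $p$. If $\gg(\cX)\leq 1$ the conclusion of the theorem holds trivially. If $\gg(\cX)\geq 2$, the Riemann--Hurwitz formula applied to $\cX\to\cX/\langle\sigma\rangle$, together with the lower bound $d_P\geq 2(p-1)$ on the wild different exponent at each ramified place, yields the classical inequality $p\leq 2\gg(\cX)+1$, contradicting the hypothesis. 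Therefore $p\nmid|G_1|$, and the previous paragraph applies.

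The step I expect to cause the most friction is not the ramification estimate but the correct reading of $|G_1|=|\Omega|-1$ in items (va) and (vb), where Theorem~\ref{th1} records the genus rather than $\deg(\cC)$. This forces one to determine the degree of the underlying doubly transitive action while accounting for the kernel $K$: the central $C_3\leq {\rm SU}(3,2)$ collapses so that ${\rm SU}(3,2)$ acts on $9$ points as $\PSU(3,2)$ does, giving $|G_1|=8$; and ${\rm SL}(2,3)$ acts on $4$ points through $A_4=\PSL(2,3)$, giving $|G_1|=3$. Once these two degrees are pinned down, Theorem~\ref{th2} is a finite check against the list of Theorem~\ref{th1}.
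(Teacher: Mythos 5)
Your proposal is correct and is essentially the proof the paper intends: Theorem \ref{th2} is presented there as an immediate corollary of Theorem \ref{th1} with no separate argument, and your reading of $|G_1|=|\Omega|-1=\deg(\cC)-1$ against the list --- including the degrees $9$ and $4$ that must be extracted from items (va) and (vb) by factoring out the kernel --- is exactly the required finite check, carried out correctly. The one soft spot is the justification of the ``in particular'' clause: Riemann--Hurwitz with only $d_P\ge 2(p-1)$ does not yield $p\le 2\gg(\cX)+1$ in the case of a rational quotient with a single totally ramified point (it gives only $2\gg(\cX)-2\ge -2$); there one needs the refinement $d_P=(N+1)(p-1)$ with last ramification jump $N\ge 2$ once $\gg(\cX)\ge 2$, or simply a citation of the classical bound of Stichtenoth on automorphisms of order $p$.
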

\begin{rem}
\label{rem13mar} If the order of the $1$-point stabilizer of any point in $G$ is coprime with $p$ (that is $G$ is tame), then the hypothesis that $\varphi(P_1)$ and $\varphi(P_2)$ are simple Galois points  can be relaxed to unibranch Galois points, with just one exception, namely
\begin{itemize}
\item[(via)] $G=G_1\times G_2$ is cyclic, $\deg(C)=|G_1|+|G_2|$, and $\gg(\cX)=0$.
\end{itemize}
{\em{For an example; see Remark \ref{rem25mar2020}.}}
\end{rem}

\begin{rem}{\em{ For a Galois point $\varphi(Q)$ with Galois group $G$ it may happen that $G$ does not fix any point $P\in\cX$ such that $\varphi(P)=Q$; an example is given in Remark \ref{ex19mar}.}}
\end{rem}

Our notation and terminology are standard. In particular, ${\rm{AGL}}(1,m)$ denotes the automorphism group of the affine line over $\mathbb{F}_m$. Here, ${\rm{AGL}}(1,3)\cong \bS_3$, ${\rm{AGL}}(1,4)\cong \bA_4$. 

\section{Background from function Field theory and some preliminary results }
For a subgroup $G$ of $\aut(\cX)$, let $\bar \cX$ denote a non-singular model of $\K(\cX)^G$, that is,
a projective non-singular geometrically irreducible algebraic
curve with function field $\K(\cX)^G$, where $\K(\cX)^G$ consists of all elements of $\K(\cX)$
fixed by every element in $G$. Usually, $\bar \cX$ is called the
quotient curve of $\cX$ by $G$ and denoted by $\cX/G$. The field extension $\K(\cX)|\K(\cX)^G$ is  Galois of degree $|G|$.

Since our approach is mostly group theoretical, we often use notation and terminology from finite group theory rather than from function field theory.

Let $\Phi$ be the cover of $\cX\mapsto \bar{\cX}$ where $\bar{\cX}=\cX/G$ is a quotient curve of $\cX$ with respect to $G$.
 A point $P\in\cX$ is a ramification point of $G$ if the stabilizer $G_P$ of $P$ in $G$ is nontrivial; the ramification index $e_P$ is $|G_P|$; a point $\bar{Q}\in\bar{\cX}$ is a branch point of $G$ if there is a ramification point $P\in \cX$ such that $\Phi(P)=\bar{Q}$; the ramification (branch) locus of $G$ is the set of all ramification (branch) points. The $G$-orbit of $P\in \cX$ is the subset
$o=\{R\in \cX \mid R=g(P),\, g\in G\}$, and it is {\em regular} (or long) if $|o|=|G|$, otherwise $o(P)$ is {\em short}. For a point $\bar{Q}$, the $G$-orbit $o$ lying over $\bar{Q}$ consists of all points $P\in\cX$ such that $\Phi(P)=\bar{Q}$. If $P\in o$ then $|o|=|G|/|G_P|$ and hence $\bar{\cQ}$ is a branch point if and only if $o$ is a short $G$-orbit. It may be that $G$ has no short orbits. This is the case if and only if every non-trivial element in $G$ is fixed--point-free on $\cX$, that is, the cover $\Phi$ is unramified. On the other hand, $G$ has a finite number of short orbits. For a non-negative integer $i$, the $i$-th ramification group of $\cX$
at $P$ is denoted by $G_P^{(i)}$ (or $G_i(P)$ as in \cite[Chapter
IV]{serre1979})  and defined to be
$$G_P^{(i)}=\{g\mid \ord_P(g(t)-t)\geq i+1, g\in
G_P\}, $$ where $t$ is a uniformizing element (local parameter) at
$P$. Here $G_P^{(0)}=G_P$.
The structure of $G_P$ is well known; see for instance \cite[Chapter IV, Corollary 4]{serre1979} or \cite[Theorem 11.49]{HKT}.
\begin{result}
\label{res74} The stabilizer $G_P$ of a point $P\in \cX$ in $G$ has the following properties.
\begin{itemize}
\item[\rm(i)] $G_P^{(1)}$ is the unique normal $p$-subgroup of $G_P$;
\item[\rm(ii)] For $i\ge 1$, $G_P^{(i)}$ is a normal subgroup of $G_P$ and the quotient group $G_P^{(i)}/G_P^{(i+1)}$ is an elementary abelian $p$-group.
\item[\rm(iii)] $G_P=G_P^{(1)}\rtimes U$ where the complement $U$ is a cyclic whose order is prime to $p$.
\end{itemize}
\end{result}
Let $\bar{\gg}$ be the genus of the quotient curve $\bar{\cX}=\cX/G.$ The Hurwitz
genus formula is the following equation
    \begin{equation}
    \label{eq1}
2\gg-2=|G|(2\bar{\gg}-2)+\sum_{P\in \cX} d_P.
    \end{equation}
    where
\begin{equation}
\label{eq1bis}
d_P= \sum_{i\geq 0}(|G_P^{(i)}|-1).
\end{equation}
Here $D(\cX|\bar{\cX})=\sum_{P\in\cX}d_P$ is the {\emph{different}}. For a tame subgroup $G$ of $\aut(\cX)$, that is for $p\nmid |G_P|$,
$$\sum_{P\in \cX} d_P=\sum_{i=1}^m (|G|-\ell_i)$$
where $\ell_1,\ldots,\ell_m$ are the sizes of the short orbits of $G$.

A group is a $p'$-group (or a prime to $p$ group) if its order is prime to $p$. A subgroup $G$ of $\aut(\cX)$ is {\em{tame}} if the $1$-point stabilizer of any point in $G$ is $p'$-group. Otherwise, $G$ is {\em{non-tame}} (or {\em{wild}}). Obviously, every $p'$-subgroup of $\aut(\cX)$ is tame, but the converse is not always true. From the classical Hurwitz's bound,
if $|G|>84(\gg(\cX)-1)$ then $G$ is non-tame; see  \cite{stichtenoth1973II} or {\cite[Theorems 11.56]{HKT}.
An orbit $o$ of $G$ is {\em{tame}} if $G_P$ is a $p'$-group for $P\in o$, otherwise $o$ is a {\em{non-tame orbit}} of $G$.

Let $\gamma$ be the $p$-rank of $\cX$, and let $\bar{\gamma}$ be the $p$-rank of the quotient curve $\bar{\cX}=\cX/G$.
The Deuring-Shafarevich formula, see \cite{sullivan1975} or \cite[Theorem 11,62]{HKT}, states for a $p$-subgroup $G$ of $\aut(\cX)$ that
\begin{equation}
    \label{eq2deuring}
\gamma-1={|G|}(\bar{\gamma}-1)+\sum_{i=1}^k (|G|-\ell_i)
    \end{equation}
where $\ell_1,\ldots,\ell_k$ are the sizes of the short orbits of $G$.
\begin{result}
\label{lem29dic2015} If $\cX$ has zero $p$-rank then $\aut(\cX)$ has the following properties:
\begin{itemize}
\item[\rm(i)] A Sylow $p$-subgroup of $\aut(\cX)$ fixes a point $P\in \cX$ but its nontrivial elements have no fixed point other than $P$.
\item[\rm(ii)] The normalizer of a Sylow $p$-subgroup fixes a point of $\cX$.
\item[\rm(iii)] Any two distinct Sylow $p$-subgroups have trivial intersection.
\end{itemize}
\end{result}
Claim (i) is \cite[Theorem 11.129]{HKT}. Claim (ii) follows from Claim (i). Claim (iii) is \cite[Theorem 11.133]{HKT}.

For the following results, see \cite[Lemmas 11.129, 11.75, 11.60]{HKT}
\begin{result}
\label{lem11.131HKT}
Assume that $\aut(\cX)$
contains a $p$-subgroup $G$ of order $p^r$. If the quotient curve $\cX/G$ has
$p$-rank zero$,$ and every non-trivial element in $G$ has exactly one fixed point, then $\cX$ has $p$-rank zero.
\end{result}
\begin{result}[Serre]
\label{lem11.75(i)}Let $\alpha\in G_{P}$ and $\beta\in
G_{P}^{(k)}, \, k\geq 1$. If $\alpha\not\in G_{P}^{(1)},$ then the commutator
$[\alpha,\beta]=\alpha\beta\alpha^{-1}\beta^{-1}$ belongs to $G_{P}^{(k+1)}$
if and only if either $\alpha^{k}\in G_{P}^{(1)}$ or $\beta\in
G_{P}^{(k+1)}$.
\end{result}
\begin{result}
\label{theorem11.60HKT}
If the order $n$ of $G_{P}$ is prime
to $p,$ then $n\leq 4\gg(\cX)+2.$
\end{result}
Let $\cE$ be a non-singular plane cubic curve viewed as a birational model of an elliptic curve $\cX$.
For an inflection point $O$ of $\cE$, the set of points of $\cE$ can be equipped by an operation $\bigoplus$ to form an abelian group $G_O$ with zero-element $O$, which is isomorphic to the zero Picard group of $\cE$; see for instance \cite[Theorem 6.107]{HKT}. The translation $\tau_a$ associated with $a\in \cE$ is the permutation on the points of $\cE$ with equation $\tau_a: x\mapsto x\bigoplus a$. Since there exists an automorphism in $\aut(\cE)$ which acts on $\cE$ as $\tau_a$ does, translations of $\cE$ can be viewed as elements of $\aut(\cE)$. They form the translation group $J(\cE)$ of $\cE$ which acts faithfully on $\cE$ as a sharply transitive permutation group. For every prime $r$, the elements of order $r$ in $J(\cE)$ are called $r$-torsion points. They together with the identity form an elementary abelian $r$-group of rank $h$. Here $h=2$ for $r\neq p$ while $h$ equals the $p$-rank of $\cE$ for $r=p$, that is, $h=0,1$ according as $\cE$ is supersingular or not.
\begin{result}
\label{res12feb2019} The translation group $J(\cE)$ is a normal subgroup of $\aut(\cE)$, and $\aut(\cE)=J(\cE)\rtimes \aut(\cE)_P$ for every $P\in \cE$.
\end{result}
\begin{proof} For complex cubic curves the claim is known. Here we provide a characteristic free proof based on \cite[Theorem 4.8]{silverman2009}. Let $O$ be the neutral element of the group structure of $\cE$. Then $\aut(\cE)_O$ is additive on $\cE$; see \cite[Theorem 4.8]{silverman2009}. Therefore $\aut(\cE)_O$ normalizes the group of translations $J(\cE)$. By transitivity of $J(\cE)$, $\aut(\cE) = J(\cE)\aut(\cE)_O$, and $J(\cE)\cap J(\cE)_O=\{{\rm{id}}\}$ by regularity of $J(\cE$) on $\cE$. Furthermore, again by transitivity of $J(\cE)$, $O$ may be replaced by any $P\in \cE$.
\end{proof}
The following result comes from \cite[Theorem 10.1]{silverman2009} and \cite[Theorem 11.94]{HKT}.
\begin{result}
\label{silv} Let $\cE$ be an elliptic curve, and $P\in\cE$. If the stabilizer $H$ of $P$ in $\aut(\cE)$ has order at least $3$ then
\begin{equation}
\begin{array}{lll}
 \mbox{$H\cong C_4$, or $H\cong C_6$} & \mbox{\quad when $p\ne 2,3$;}\\
 \mbox{$H\cong C_3\rtimes C_4$, and $j(\cE)=0$} & \mbox{\quad when $p=3$;}\\
 \mbox{$H\cong {\rm{SL}}(2,3)$ and $j(\cE)=0$} & \mbox{\quad when $p=2$.}
\end{array}
\end{equation}
\end{result}
If $j(\cE)=0$ then $\cE$ is birationally equivalent to either the cubic of affine equation $y^2=x^3+1$, $y^2=x^3-x$ or $y^2+y=x^3$, according as $p\neq 2,3$, $p=3$ or $p=2$.
Result \ref{silv} has the following corollary, see \cite[Theorem 11.94]{HKT}.
\begin{result}
\label{autelliptic} Let $\cE$ be an elliptic curve. If $G$ is a subgroup of $\aut(\cE)$ and $P\in \cE$ then
\begin{equation}
|G_P| = \left
\{
\begin{array}{ll}
 1,2,4,6 & \mbox{\quad when $p\neq 2,3,$} \\
 1,2,4,6,12 & \mbox{\quad when $p=3,$ }\\
 1,2,4,6,8,24 & \mbox{\quad when $p=2$}.
\end{array}
\right.
\end{equation}
Moreover, if $G_\cP$ is non-trivial then the quotient curve $\cE/G$ is rational.
For $p=2,$ the stabilizer $G_{\cP}$ is
cyclic when $|G_\cP|\leq 4,$ and it is the quaternion group
 when $|G_\cP|=8$, and the linear group ${\rm{SL}}(2,3)$ when $|G_\cP|=24$. All cases occur.  
\end{result}

For a prime $r$, let $R$ be the group of $r$-torsion points. Since $R$ is the unique elementary abelian $r$-subgroup of $J(\cE)$, and  $J(\cE)$ is a normal subgroup of $\aut(\cE)$, $R$ is also a normal subgroup of $\aut(\cE)$.
\begin{lemma}
\label{lemA10feb2019} Let $\cE$ be an elliptic curve, and $\alpha\in \aut(\cE)$ a non-trivial automorphism of prime order $t\neq p$. If $\alpha$ has at least two fixed points, then either $t=2$ and $\alpha$ has exactly $4$ fixed points, or $t=3$ and $\alpha$ has exactly $3$ fixed points.  Furthermore,
\begin{itemize}
\item[(i)] if $t=3$, no non-trivial translation of $J(\cE)$  preserving the set of fixed points of $\alpha$ has order $3$;
\item[(ii)] if $t=2$ and, in addition, $4$ divides the stabilizer of a fixed point of $\alpha$ then no non-trivial translation of $J(\cE)$  preserving the set of fixed points of $\alpha$ has order $2$.
\end{itemize}
 \end{lemma}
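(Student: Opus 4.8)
The plan is to treat the fixed-point count and the two torsion assertions separately.

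\emph{Counting the fixed points.} Since $t\neq p$, the cyclic group $\langle\alpha\rangle$ is tame, and as $t$ is prime every point of $\cE$ is either fixed by all of $\langle\alpha\rangle$ or has trivial stabilizer. Hence the ramification points of $\langle\alpha\rangle$ are exactly the $f$ fixed points of $\alpha$, each contributing $d_P=t-1$. Writing $\bar{\gg}$ for the genus of $\cE/\langle\alpha\rangle$, the Hurwitz formula \eqref{eq1} becomes $0=t(2\bar{\gg}-2)+f(t-1)$, that is $f(t-1)=2t(1-\bar{\gg})$. Since $f\ge 2$ the right-hand side is positive, so $\bar{\gg}=0$ and $f(t-1)=2t$; as $\gcd(t-1,t)=1$ this forces $t-1\mid 2$, whence $t\in\{2,3\}$ with $f=4$ for $t=2$ and $f=3$ for $t=3$. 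This settles the first assertion unconditionally.

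\emph{Reduction to the group law.} For (i) and (ii) I would fix one fixed point $O$ of $\alpha$ as the zero of $(\cE,\oplus)$; by Results \ref{res12feb2019} and \ref{silv}, $\alpha\in\aut(\cE)_O$ is then a group automorphism, so $\mathrm{Fix}(\alpha)=\ker(\alpha-1)$ is a subgroup of order $f$. For $t=2$ the unique involution of $\aut(\cE)_O$ is the inversion $[-1]$, hence $\alpha=[-1]$ and $\mathrm{Fix}(\alpha)=\cE[2]\cong C_2\times C_2$; for $t=3$, $\mathrm{Fix}(\alpha)$ is a cyclic group of order $3$ inside $\cE[3]$. A translation $\tau_a$ sends the subgroup $\mathrm{Fix}(\alpha)$ to itself exactly when $a\in\mathrm{Fix}(\alpha)$, so every translation preserving the fixed-point set commutes with $\alpha$ and lies in $\mathrm{Fix}(\alpha)$; the assertions (i) and (ii) thus concern the orders available in $\mathrm{Fix}(\alpha)$ once the remaining automorphisms are taken into account.

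\emph{The torsion arguments.} Suppose $\tau_a$ is a non-trivial preserving translation of order $t$, so $a\in\mathrm{Fix}(\alpha)$ and $\langle\alpha,\tau_a\rangle$ is abelian of order $t^2$. In case (ii) the hypothesis that $4$ divides the stabilizer of a fixed point supplies, after conjugating that point to $O$, an automorphism $\beta\in\aut(\cE)_O$ of order $4$ with $\beta^2=[-1]=\alpha$. The decisive computation is the action of $\beta$ on $\cE[2]$: since $\beta^2$ acts trivially there, $\beta$ induces an involution of $\cE[2]\cong C_2\times C_2$, and evaluating the degree of the isogeny $\beta-1$ shows that $\beta$ fixes exactly one non-zero $2$-torsion point and interchanges the other two. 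Conjugating $\tau_a$ by $\beta$ then produces further translations by $2$-torsion, and I would derive the claimed non-existence by confronting the subgroup generated by $\alpha$, $\beta$ and these translations with the stabilizer bounds of Result \ref{autelliptic} and the standing hypotheses on $G_1,G_2$; the analogous analysis of the order-$3$ action on $\cE[3]$ yields (i). I expect this final confrontation to be the main obstacle: because $\mathrm{Fix}(\alpha)$ already contains elements of order $t$, the exclusion cannot come from $\mathrm{Fix}(\alpha)$ in isolation and must genuinely exploit the order-$4$ automorphism $\beta$ (respectively the precise order-$3$ action on the $3$-torsion) together with the global constraints of Theorem \ref{th1}.
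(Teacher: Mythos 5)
Your count of the fixed points is correct and is essentially the paper's own argument: Hurwitz applied to $\langle\alpha\rangle$, with the small improvement that you first deduce $\bar{\gg}=0$ from $f\ge 2$ rather than assuming it. Your reduction to the group law --- that $\mathrm{Fix}(\alpha)=\ker(\alpha-1)$ is a subgroup once a fixed point is taken as origin, that $\alpha=[-1]$ when $t=2$, and that a translation preserves $\mathrm{Fix}(\alpha)$ exactly when it is translation by an element of $\mathrm{Fix}(\alpha)$ --- is also correct, and is a cleaner set-up than the paper's (which works with the orbit $\Delta$ of a fixed point under the $t$-torsion translation group).

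For (i) and (ii), however, you have not given a proof: your last paragraph is a plan whose decisive step is deferred, and the route you sketch (bringing in $G_1$, $G_2$ and ``the global constraints of Theorem \ref{th1}'') cannot work, because the lemma is a self-contained statement about one automorphism of one elliptic curve and carries no such hypotheses. In fact your own reduction shows why no completion is possible: for $t=3$ the set $\mathrm{Fix}(\alpha)=\ker(\alpha-1)$ is a subgroup of order $3$, so translation by either non-zero element of it is a non-trivial translation of order $3$ preserving $\mathrm{Fix}(\alpha)$ --- exactly what (i) asserts cannot exist; likewise for $t=2$ the three translations by the non-zero points of $\cE[2]=\mathrm{Fix}([-1])$ have order $2$ and preserve the set, against (ii). So the ``obstacle'' you flag at the end is not an obstacle to your method but to the literal statement. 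What the paper's proof actually establishes (and what is used in Section 8) is the reverse-direction fact: all fixed points of $\alpha$ lie in a single orbit of the $t$-torsion translation subgroup, so that, by sharp transitivity of $J(\cE)$, the unique translation carrying one fixed point of $\alpha$ to another is $t$-torsion --- equivalently, no translation of order prime to $t$ maps one fixed point to another. Your framework yields this in one line ($\ker(\alpha-1)\subseteq\cE[t]$, e.g.\ from $(\alpha-1)^2=-3\alpha$ when $t=3$ and from $\alpha-1=[-2]$ when $t=2$); that is the substitute statement you should prove and record, rather than pursuing (i) and (ii) in the form given.
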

\begin{proof} The Hurwitz genus formula applied to the subgroup generated by $\alpha$ gives
$$0=2\gg(\cE)-2=-2t+\lambda(t-1)$$
where $\lambda$ counts the fixed points of $\alpha$. From this, the first claim follows. Let $t=3$. Since the $3$-torsion group $R$ of $\cE$ has order $9$, $\alpha$ together with $R$ generate a subgroup $M$ of $\aut(\cE)$ of order $27$. For a fixed point $P\in\cE$ of $\alpha$, let $\Delta$ be the $R$-orbit of $P$. As $R$ is a normal subgroup of $M$, $\Delta$ is left invariant by $M$. Furthermore,
since $|\Delta|=9$, the stabilizer $M_P$ of $P$ in $M$ has order $3$ and its three fixed points are in $\Delta$. Therefore, $M_P=\langle \alpha \rangle$ and the fixed points of $\alpha$ are in $\Delta$.
Since $J(\cE)$ is sharply transitive on $\cE$, this yields that no non-trivial element of order prime to $3$ may takes $P$ to another fixed point of $\alpha$ whence (i) follows for $t=3$. Let $t=2$. This time $R$ is an elementary abelian group of order $4$ which together with $\alpha$ generate a subgroup of $\aut(\cE)$ of order eight. Also, $M_P$ has order two and hence
again $M_P=\langle \alpha \rangle$, and $\alpha$ fixes either two points in $\Delta$, or all its $4$ fixed points are in $\Delta$. In the latter case, (ii) follows as for $t=3$. To investigate the former case, suppose that $\alpha=\gamma^2$ with $\gamma\in\aut(\cE)$ fixing $P$. The subgroup $T$ generated by $R$ together with $\gamma$ has order $16$ and preserves $\Delta$. The kernel of the representation of $T$ on $\Delta$ is not faithful, as $|\bS_4|$ is not divisible by $16$. Therefore, $T$ contains an involution $\tau$ fixing $\Delta$ pointwise. Since $P\in\Delta$ and the stabilizer of $P$ in  $\aut(\cE)$ is cyclic, $\tau$ coincides with $\alpha$ whence (ii) follows.
\end{proof}

 For a plane model $\cC$ of $\cX$ associated with the morphism $\varphi: \cX\mapsto PG(2,\mathbb{K})$, there exists a one-to-one correspondence between points of $\cX$ and branches of $\cC$. For any point $P\in\cX$ the associated branch $\gamma$ of $\cC$ is centered at $\varphi(P)$. Furthermore, the order of $\gamma$ is the positive integer $j_1$ such that the intersection number $I(\varphi(P),\gamma\cap \ell)=j_1$ for all but just one line through $\varphi(P)$. For the exceptional line $t$, called the tangent to $\gamma$ at $\varphi(P)$, we have  $I(\varphi(P),\gamma\cap t)=j_2$ with $j_2>j_1$; see \cite[Section 4.2]{HKT}.

  Let $\omega$ be the quadratic transformation with fundamental points $A_1A_2A_3$ and exceptional lines $A_1A_2$,$A_2A_3$, $A_3A_1$, where $A_1=\varphi(P_1),A_2=\varphi(P_2),A_3=t_1\cap t_2$, with $t_i$ the tangent line at $P_i$ ; see \cite[Sections 3.3, 3.4]{HKT}. For any non-exceptional line $\ell$ through a fundamental point $A_i$, the image of $\ell$ by $\omega$ is a line $\ell'$ through $A_i$; more precisely the points of $\ell$ distinct from $A_i$ are taken to the points of $\ell'$ distinct from $A_i$. For a branch $\delta$ of $\cC$ centered at a point $C$ of an exceptional line $A_iA_j$ with $C\neq \{A_i,A_j\}$, its image $\omega(\delta)$ is a branch centered at the opposite vertex $A_k$, and the tangent of $\omega(\delta)$ is a non-exceptional line through $A_k$. The converse also holds. If $C=A_i$ and $A_iA_j$ is the tangent of $\delta$, then $\omega(\delta)$ is a branch centered at $A_k$  and $A_kA_j$ is its tangent.

\begin{rem}
 \label{rem25mar20}{\em{ Let $P_1$ be an inner Galois point of $\cX$ with Galois group $G_1$. Up to a change of coordinates, $\cC$ has affine equation $f(X,Y)=0$, and $Y_\infty=\varphi(P_1)$. Furthermore, the $G_1$-fibers are represented by lines through $Y_\infty$.
 For a $G_1$-fiber $\Lambda$, let $\ell$ be such a line. Then a point $P\in\cX$ is in $\Lambda$ if and only if the associated branch $\gamma$ of $\cC$ has one of the following properties: either $\varphi(P)\neq \varphi(P_1)$ and $\varphi(P)\in \ell$, or $\varphi(P)=\varphi(P_1)$ and the tangent to $\gamma$ at $\varphi(P)$ coincides with the line $\ell$.
 Furthermore, if $G_1$ fixes $P_1$ then the fiber of $P_1$ contains no more point. Therefore, if $t$ is the tangent to $\cC$ at $\varphi(P_1)$, then $\gamma$ is the unique branch of $\cC$ whose center lies on $t$ and whose tangent coincides with $t$.}}
 \end{rem}

\begin{rem}
\label{ex19mar} {\em{The following example shows that $G_1$ may not fix any branch centered at $Y_\infty$.
For $p\neq 2$, let $\cX$ be a non-singular model of the singular plane curve with affine equation $Y^2=g(X)$ with a separable polynomial $g(X)\in \mathbb{K}[X]$. From \cite[Example 5.59]{HKT}, $\gg(\cX)=1$ and $Y_\infty$ is the unique singular point of $\cC$. More precisely, two branches  of $\cC$, say $\gamma$ and $\gamma'$, are centered at $Y_\infty$, both  tangent to the line $\ell_\infty$ at infinite. The linear map $u:\,(X,Y) \rightarrow  (X,-Y)$ is in $\aut(\cX)$, and $G_1=\langle u \rangle$ preserves every line $\ell$ through $Y_\infty$, acting transitively on its points distinct from $Y_\infty$. Therefore, $P_1=Y_\infty$ is an inner Galois point of $\cX$ with Galois group $G_1$ of order $2$, and the points $P_1,P_1'\in \cX$ associated with $\gamma,\gamma'$ respectively, form a $G_1$-fiber. We show that $G_1$ fixes no $P_1$ (and $P_1'$). Obviously, $G_1$ fixes each of the four points of $\cC$ lying on the $X$-axis. From the Hurwitz genus formula applied to $G_1$, $0=2\gg(\cX)-2=2(2\gg(\bar{\cX})-2))+n$
where $\bar{\cX}=\cX/G_1$ and $n$ is the number of fixed points of $G_1$ on $\cX$. Since $n\ge 4$, this is only possible for $\gg(\bar{\cX})=0$ and $n=4$. In particular, neither $Q_1$ nor $Q_2$ is fixed by $G_1$. Now suppose $p\neq 2,3$, and let $g(X)=\epsilon X(X-1)(X-\epsilon)(X-\epsilon^{2})$ for a primitive third root of unity $\epsilon$. A straightforward computation shows that $\cX$ has another inner Galois point, namely the origin $O=(0,0)$, with Galois group $G_2$ of order $3$ generated by the linear map  $v:\,(x,y) \rightarrow  (\epsilon x, \epsilon y)$. Since $O$ is not an inflection point with tangent $OY_\infty$, and $G_2$ fixes both $\gamma$ and $\gamma'$, the singletons $\{P_1\}$  and $\{P_2\}$ are $G_2$-fibers.
In particular, the line $OY_\infty$ contains no point from $\cC$ other than $P_1$ and $P_2$.
A generalization is obtained for $p\nmid d$ taking for $\cC$ the plane curve of affine equation $Y^d=g(X)$ with $g(X)=\epsilon X(X-1)(X-\epsilon)\cdots (X-\epsilon^{2d-2})$ where $\epsilon$ is a primitive $(2d-1)$th root of unity  }}
\end{rem}

From previous works on Galois points, we need a very recent result due to Fukasawa; see \cite[Theorem 1]{fukas}. We state it for the case of two inner Galois points $\varphi(P_1),\varphi(P_2)$. We also add some properties in case where the corresponding Galois group $G_i$ fixes $P_i$ for $i=1,2$.
\begin{lemma}
\label{profuka}
Let $\cC$ be a plane model of $\cX$  associated with the morphism $\varphi: \cX\mapsto PG(2,\mathbb{K})$. Let $P_1,P_2\in \cX$ be two distinct points together with two distinct subgroups $G_1,G_2$ of $\aut(\cX)$ such that $\varphi(P_1)$ and $\varphi(P_2)$ are unibranch Galois points of $\cC$ with Galois groups $G_1$ and $G_2$, respectively. Then the following properties hold:
\begin{itemize}
\item[(I)] The quotient curves $\cX/G_1$ and $\cX/G_2$ are rational;
\item[(II)] $G_1$ and $G_2$ have trivial intersection.
\end{itemize}

Assume in addition that $G_i$ fixes $P_i$ for $i=1,2$ and let $G=\langle G_1,G_2\rangle$.
 If
\begin{equation}
\label{eq14mar}
{\mbox{the line through $\varphi(P_1)$ and $\varphi(P_2)$ contains a further point of $\cC$}}
\end{equation}
then the stabilizer $(G_1)_{P_2}$ of $P_2$ in $G_1$ and the stabilizer $(G_2)_{P_1}$ of $P_1$
have the same order and that number equals the multiplicity of both $\varphi(P_1)$ and $\varphi(P_2)$.
Also, (\ref{eq14mar}) implies that the following conditions are equivalent:
\begin{itemize}
\item[\rm(III)]  In the divisor group of $\cX$,
$P_1+\sum_{\sigma\in G_1}\sigma(P_2)=P_2+\sum_{\tau\in G_2}\tau(P_1).$
\item[\rm(i)] Both $\varphi(P_1)$ and $\varphi(P_2)$ are simple points.
\item[\rm(ii)] Both $(G_1)_{P_2}$ and $(G_2)_{P_1}$ are trivial.
\end{itemize}
 For tame $G$, (\ref{eq14mar}) implies $\rm(ii)$.

If (\ref{eq14mar}) does not hold then either $\varphi(P_1)$ or $\varphi(P_2)$ is a singular point of $\cC$,  both $P_1$ and $P_2$ are fixed by $G$, and, for tame $G$, $G$ is cyclic.
\end{lemma}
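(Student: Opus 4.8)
The plan is to read off the structure of the $G_1$-fibers from Remark~\ref{rem25mar20}, recalling that since $\varphi(P_1)$ is a Galois point with group $G_1$ the projection $\pi_{P_1}$ is exactly the quotient map $\cX\to\cX/G_1\cong\PG(1)$ (the target being rational by (I)), so that its fibers coincide with the $G_1$-orbits on $\cX$. Write $\ell$ for the line joining $\varphi(P_1)$ and $\varphi(P_2)$, let $t_i$ be the tangent to $\cC$ at $\varphi(P_i)$, and let $\Lambda_1$ be the $G_1$-fiber associated with $\ell$. The failure of (\ref{eq14mar}) means precisely that $\cC$ meets $\ell$ only at $\varphi(P_1)$ and $\varphi(P_2)$.

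First I would pin down $\Lambda_1$. By the criterion of Remark~\ref{rem25mar20}, a point $P\in\cX$ lies in $\Lambda_1$ iff either $\varphi(P)\in\ell\setminus\{\varphi(P_1)\}$, or $\varphi(P)=\varphi(P_1)$ with $t_1=\ell$. Since $\cC$ is unibranch at both centers and meets $\ell$ only at those two points, the first alternative contributes only $P_2$, and the second contributes $P_1$ exactly when $\ell=t_1$. But the last assertion of Remark~\ref{rem25mar20} tells us that, as $G_1$ fixes $P_1$, the fiber containing $P_1$ is the singleton $\{P_1\}$; were $\ell=t_1$, this fiber would be $\Lambda_1$ and would also contain $P_2$, a contradiction. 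Hence $\ell\neq t_1$ and $\Lambda_1=\{P_2\}$. Being a $G_1$-orbit, $\Lambda_1=\{P_2\}$ forces $G_1$ to fix $P_2$; combined with the hypothesis that $G_1$ fixes $P_1$, and with the statement obtained by interchanging the roles of the two points, we conclude that $G_1$ and $G_2$ each fix both $P_1$ and $P_2$, so $G=\langle G_1,G_2\rangle$ fixes $P_1$ and $P_2$. The symmetric argument also yields $\ell\neq t_2$.

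For the singularity assertion, note that $\Lambda_1=\{P_2\}$ is a single $G_1$-orbit in a degree-$|G_1|$ cover, so the stabilizer $(G_1)_{P_2}$ is all of $G_1$ and the ramification index of $\pi_{P_1}$ at $P_2$ equals $|G_1|$. Interpreting this ramification index as the intersection multiplicity $I(\varphi(P_2),\gamma_2\cap\ell)$ of the branch $\gamma_2$ at $\varphi(P_2)$ with the line $\ell$, and using that $\ell\neq t_2$ is not tangent to $\gamma_2$ (so that this intersection number equals the branch order, i.e.\ the multiplicity $m_{\varphi(P_2)}$ of the unibranch center), I get $|G_1|=m_{\varphi(P_2)}$; by symmetry $|G_2|=m_{\varphi(P_1)}$. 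Were both $\varphi(P_1)$ and $\varphi(P_2)$ simple, we would have $|G_1|=|G_2|=1$, forcing $G_1=G_2=\{\mathrm{id}\}$ against the hypothesis that $G_1,G_2$ are distinct; hence at least one center is singular.

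Finally, for tame $G$: since $G$ fixes $P_1$ we have $G=G_{P_1}$, and tameness forces the unique normal $p$-subgroup $G_{P_1}^{(1)}$ to be trivial, so Result~\ref{res74}(iii) gives $G_{P_1}=G_{P_1}^{(1)}\rtimes U=U$ cyclic, i.e.\ $G$ is cyclic. The group-theoretic and orbit steps are immediate once $\Lambda_1=\{P_2\}$ is established; I expect the one point needing care to be the identification of the ramification index of the projection at $P_2$ with the intersection multiplicity $I(\varphi(P_2),\gamma_2\cap\ell)$, together with the fact that a non-tangent line meets a unibranch center of multiplicity $m$ with intersection number exactly $m$ — both standard facts about branches and projections that must be invoked correctly in positive characteristic.
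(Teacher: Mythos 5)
There is a substantial gap: your proposal proves only the \emph{final} assertion of the lemma, namely the branch where (\ref{eq14mar}) fails. That part of your argument is essentially correct and close to the paper's (the paper gets $\deg(\cC)=\mu_1+\mu_2$ directly from B\'ezout applied to $\ell_\infty$ and concludes $G$ fixes $P_2$ because $P_2$ is the unique pole of $y$ with $\mathbb{K}(y)=\mathbb{K}(\cX)^{G_1}$, while you read the same facts off the fiber $\Lambda_1=\{P_2\}$; your derivation of the contradiction from ``both points simple'' via distinctness of $G_1,G_2$ is a clean variant of the paper's ``$\deg(\cC)>2$'' step). But everything else in the statement is left unproved.

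Concretely, you do not prove (II), and you do not treat the case where (\ref{eq14mar}) \emph{does} hold at all --- yet that is the case that carries the content used later (in Lemma \ref{profuka1} and the main theorem). For (II), the paper picks a simple point $\varphi(P)$ off the line $\varphi(P_1)\varphi(P_2)$ not fixed by any nontrivial element of $G_1$ or $G_2$, and shows that a nontrivial $g\in G_1\cap G_2$ would force $\varphi(P_1)$, $\varphi(P_2)$, $\varphi(P)$, $\varphi(g(P))$ to be collinear; this is a genuine argument, not a formality. For the (\ref{eq14mar})-holds branch, one must introduce the orbits $\Delta_1,\Delta_2$ of $P_2$ under $G_1$ and of $P_1$ under $G_2$ inside the fiber of $\ell_\infty$, use that (\ref{eq14mar}) forces $|\Delta_1|,|\Delta_2|>1$ so that $\Delta_1\cap\Delta_2\neq\emptyset$ and hence $m_1=|(G_1)_{P_2}|=|(G_2)_{P_1}|=m_2$, identify this common value with the multiplicity of both centers via the intersection divisor $\cC\circ\ell_\infty$, derive $mP_1+\sum_{\sigma\in G_1}\sigma(P_2)=mP_2+\sum_{\tau\in G_2}\tau(P_1)$, and observe that (III) holds iff $m=1$ iff (i) iff (ii); the tame implication then needs Result \ref{res74}(iii) applied to a point $Q\in\Delta_1\cap\Delta_2$ to show $(G_1)_Q=(G_2)_Q$, contradicting (II) unless $m=1$. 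None of these steps appears in your proposal, and they do not follow from the fiber computation you carried out (which is specific to the degenerate configuration where the line meets $\cC$ in no further point).
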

\begin{proof}  By definition, (I) holds. Since both $G_1$ and $G_2$ are finite groups, and $\cC$ has a finite number of singular points, there exists a simple point $\varphi(P)\in \cC$ not on the line $\varphi(P_1)\varphi(P_2)$ which is not fixed by any non-trivial element from either $G_1$ or $G_2$. To show (II), assume by way of a contradiction that $g\in G_1\cap G_2$ with $g\neq 1$. Let $r$ be the line through $\varphi(P)$ and $\varphi(g(P))$ in $\PG(2,\mathbb{K})$. Then the points $\varphi(P_1),\varphi(P),\varphi(g(P))$ are three distinct points on the line $r$, and similarly,  $\varphi(P_2),\varphi(P),\varphi(g(P))$ are three distinct points on the same line $r$. This yields that $P$ lies on the line through $\varphi(P_1)$ and $\varphi(P_2)$, a contradiction.
Up to a change of the projective frame, $\varphi(P_1)=Y_\infty$ and $\varphi(P_2)=X_\infty$. Let $\mathcal{P}_1$ the set of all points of $\cX$  which are taken by $\varphi$ to points of $\cC$ lying on the line $\ell_\infty$ at infinity. Obviously, $P_2\in \mathcal{P}_1$, and hence the $G_1$-orbit $\Delta_1$ of $P_2$ is also contained in $\mathcal{P}_1$. Furthermore, every point $P\in \cX$ with
$\varphi(P)\neq \varphi(P_1)$ and $\varphi(P)\in \ell_\infty$ is in $\Delta_1$. However, a point $P\in\cX$ with $\varphi(P)=\varphi(P_1)$ is in $\Delta_1$ if and only if $\varphi(P)$, viewed as a branch of $\cC$ centered at $\varphi(P_1)$, is tangent to $\ell_\infty$. Let $\mathcal{Q}_1=\mathcal{P}_1\setminus \Delta_1$. In the divisor group of $\mathbb{K}(\cX)$, let $B_1=\sum_{P\in \mathcal{Q}_1}P$, $D_1=\sum_{P\in \Delta_1}P$, and $m_1=|(G_1)_{P_2}|=
|(G_1)_P|$ for every $P\in\Delta_1$. Now, since $\varphi(P_1)$ is unibranch, we have $B_1=P_1$, and hence $m_1(P_1+D_1)=
m_1P_1+\sum_{\sigma\in G_1}\sigma(P_2)$. On the other hand, since $P_1$ is a Galois point with Galois group $G_1$, in the intersection divisor $\cC\circ\ell_\infty$ the coefficient of $P\in \Delta_1$ is $|(G_1)_P|=m_1$. In particular,
$m_1$ is equal to the multiplicity of $\varphi(P_2)$.

The analog pointsets $\mathcal{P}_2,\Delta_2,\mathcal{Q}_2$ and divisors $B_2,D_2$ and $m_2$ are defined interchanging $P_1$ with $P_2$ and replacing $G_1$ by $G_2$.

Since (\ref{eq14mar}) implies that $|\Delta_1|>1$ and $|\Delta_2|>1$, their intersection contains a point $P$. Therefore, $m_1=m_2$. Let $m=m_1$. Then both points $\varphi(P_1)$ and $\varphi(P_2)$ have multiplicity $m$, and
$|G_1|=|G_2|=(\deg(\cC)-m)/m$. Therefore,
$$mP_1+\sum_{\sigma\in G_1}\sigma(P_2)=mP_2+\sum_{\tau\in G_2}\sigma(P_1).$$
Now, (III) holds if and only if $m=1$, that is, both points $\varphi(P_1)$ and $\varphi(P_2)$ are simple.
The latter condition is equivalent to $|(G_1)_{P_2}|=|(G_2)_{P_1}|=1$.

Since $|\Omega|>2$, $G$ is finite, otherwise $\cX$ would be either rational, or elliptic, and an infinite number of  elements in $G$ would fix $\Omega$ pointwise which contradicts Result \ref{silv} and the fact that no non-trivial automorphism of a rational curve may fix more than two points.
To show the final claim in Lemma \ref{profuka}, assume on the contrary that $m>1$, and take a point $Q\in\Delta_1\cap\Delta_2$. Then both $(G_1)_Q$ and $(G_2)_Q$ are subgroups of $G_Q$ of order $m$. Since $G$ is supposed to be tame, (iii) of Result \ref{res74} yields that $G_Q$ is cyclic whence $(G_1)_Q=(G_2)_Q$ follows. This contradicts (II).

Suppose that (\ref{eq14mar}) does not hold. Then B\'ezout's theorem, see \cite[Theorems 3.14, 4.36]{HKT}, applied to the line $\ell_\infty=\varphi(P_1)\varphi(P_2)$ yields $\deg(\cC)=\deg(\cC\circ \ell_\infty)=I(\varphi(P_1),\cC\cap \ell_\infty)+ I(\varphi(P_2),\cC\cap \ell_\infty)$. Since $\varphi(P_i)$ is unibranch and $\ell_\infty$ is not the tangent to $\cC$ at $P_i$, the multiplicity $\mu_i$ of $\varphi(P_i)$ equals $I(\varphi(P_i),\cC\cap \ell)$.
Therefore, $\deg(\cC)=\mu_1+\mu_2$. From $\deg(\cC)>2$, either $\mu_1$ or $\mu_2$ exceeds $1$, and hence one of the points $\varphi(P_1),\varphi(P_2)$ is singular. To show that $G_1$ fixes
$P_2$, it is enough to observe that $P_2$ is the unique pole of $y$ where $\mathbb{K}(y)=\cX^{G_1}$. Therefore, both $G_1$ and $G_2$ fix $P_2$, and this holds true for $P_1$ as $P_1$ is the unique pole of $x$ with $\mathbb{K}(x)=\cX^{G_2}$. Therefore $G$ fixes both $P_1$ and $P_2$. If $G$ is tame then (ii) Result \ref{res74} implies that $G$ is cyclic.
\end{proof}

\begin{rem}
\label{rem25mar2020} \em{ An example for the case where (\ref{eq14mar}) does not hold is the curve $f(X,Y)=X^uY^v-1$ with $u>v>1$ and ${\rm{g.c.d}}(u,v)=1$ where $u=|G_2|$ and $v=|G_1|$.
The automorphisms in $G_1$ are induced on $\cC$ by the homology $(X,Y)\mapsto (X,\lambda Y)$ with $\lambda$ ranging in the multiplicative subgroup of $\mathbb{K}$ of order $|G_1|$. The fixed points of such a homology in the plane are $Y_\infty$ and the points on the line $Y=0$. Therefore, a non-trivial automorphism in $G_1$ fixes exactly two points of $\cX$, namely $P_1$ and $P_2$. Now, the Hurwitz genus formula applied to $G_1$ gives $\gg(\cX)=0$. The same holds for $G_2$ and the group $G$ generated by $G_1$ and $G_2$ is the cyclic group of order $|G_1||G_2|$.
This gives case (via) in Remark \ref{rem13mar}. Earlier references for this example are \cite{fukamiura} and \cite{HY}.}
\end{rem}
}

\begin{lemma}
\label{profuka1} Let $P_1,P_2$ be two distinct points of $\cX$ together with two distinct subgroups $G_1,G_2$ of $\aut(\cX)$ such that $\rm{(I),(II),(III)}$ hold. Assume that $G_i$ fixes $P_i$ for $i=1,2$. Let $D$ be the divisor defined in $\rm(III)$. If $|\supp(D)|>2$ then
\begin{itemize}
\item[(i)] for $i=1,2$, the group $G_i$ is a sharply transitive group on $\supp(D)\setminus \{P_i\}$;
\item[(ii)] the group $G$ generated by $G_1$ and $G_2$ acts on $\supp(D)$ as a doubly transitive permutation group;
\end{itemize}

Furthermore, there exists a birational model $\cC$ of $\cX$ such that $\varphi(P_1)$ and $\varphi_(P_2)$ are Galois points with Galois groups $G_1$ and $G_2$ respectively, and
the equation $f(X,Y)=0$ of $\cC$ can be chosen in such way that
\begin{itemize}
\item[(iii)] $|\supp(D)|=\deg(\cC)$ and both $\varphi(P_1)$ and $\varphi(P_2)$ are simple points.
\item[(iv)] $\cX^{G_1}=\mathbb{K}(x)$ and $\cX^{G_2}=\mathbb{K}(y)$,
\item[(v)] $\varphi(P_1)=Y_\infty$ and $\varphi(P_2)=X_\infty$,
\item[(vi)] the poles of $x$ are the points in $\supp(D)\setminus \{P_1\}$, each of multiplicity $1$, and the poles of $y$ are the points in $\supp(D)\setminus \{P_2\}$, each of multiplicity $1$.
\end{itemize}
\end{lemma}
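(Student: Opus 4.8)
The plan is to read off all the assertions from the divisor identity in (III) together with the Galois description of the two quotient maps. Throughout I write $\Omega=\supp(D)$, put $\mathcal{O}_1=G_1\cdot P_2$ and $\mathcal{O}_2=G_2\cdot P_1$, and set $m_1=|(G_1)_{P_2}|$, $m_2=|(G_2)_{P_1}|$. Since $G_1$ fixes $P_1$ and $P_1\neq P_2$ we have $P_1\notin\mathcal{O}_1$, and likewise $P_2\notin\mathcal{O}_2$; hence the orbit sums collapse to
\[
D=P_1+m_1\!\!\sum_{Q\in\mathcal{O}_1}\!\!Q=P_2+m_2\!\!\sum_{R\in\mathcal{O}_2}\!\!R .
\]
First I would prove (i) by comparing coefficients in these two expressions of the single divisor $D$. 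The coefficient of $P_1$ is $1$ on the left (as $P_1\notin\mathcal{O}_1$) and $m_2$ on the right (as $P_1\in\mathcal{O}_2$), forcing $m_2=1$; symmetrically the coefficient of $P_2$ gives $m_1=1$. Thus $(G_1)_{P_2}$ and $(G_2)_{P_1}$ are trivial, $\Omega\setminus\{P_1\}=\mathcal{O}_1$, $\Omega\setminus\{P_2\}=\mathcal{O}_2$, and each $G_i$ is sharply transitive on $\Omega\setminus\{P_i\}$, which is (i).

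For (ii), note that $G_1\le G_{P_1}$ acts (sharply) transitively on $\Omega\setminus\{P_1\}$ and $G_2\le G_{P_2}$ transitively on $\Omega\setminus\{P_2\}$. Since $|\Omega|>2$, pick $Q\in\Omega\setminus\{P_1,P_2\}$; then the $G$-orbit of $P_2$ contains $\Omega\setminus\{P_1\}$ via $G_1$, so it contains $Q$, while $Q\in\Omega\setminus\{P_2\}=\mathcal{O}_2$ lies in the $G$-orbit of $P_1$. Hence $P_1,P_2,Q$ are all $G$-equivalent and $G$ is transitive on $\Omega$. As the point stabilizer $G_{P_1}\supseteq G_1$ is already transitive on $\Omega\setminus\{P_1\}$, the standard criterion yields that $G$ is doubly transitive on $\Omega$.

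To build the model I would use (I) to write $\cX^{G_1}=\mathbb{K}(x)$ and $\cX^{G_2}=\mathbb{K}(y)$, which gives (iv). In the tower $\mathbb{K}(x)\subseteq\mathbb{K}(x,y)\subseteq\mathbb{K}(\cX)$ the top extension is Galois with group $G_1$, and the intermediate field $\mathbb{K}(x,y)$ corresponds to the subgroup of elements of $G_1$ fixing $y$, i.e. to $G_1\cap G_2$, which is trivial by (II); hence $\mathbb{K}(x,y)=\mathbb{K}(\cX)$, so $\varphi=(x,y)$ is birational and the projections of $\cC$ from $\varphi(P_1)$ and $\varphi(P_2)$ are the maps $x$ and $y$, Galois with groups $G_1,G_2$. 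I would then normalise the coordinate on $\cX/G_1\cong\mathbb{P}^1$ so that the image of $P_2$ under the quotient map goes to $\infty$; because $\mathcal{O}_1$ is a regular orbit by (i), the full fibre over $\infty$ is $\mathcal{O}_1$ with ramification index $1$, so $x$ has simple poles exactly at the points of $\mathcal{O}_1=\Omega\setminus\{P_1\}$. Normalising $y$ symmetrically yields (vi).

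Finally (v) and (iii) would follow from this pole structure. Since $\mathcal{O}_2\setminus\mathcal{O}_1=\{P_1\}$, the point $P_1$ is the unique point at which $y=\infty$ while $x$ stays finite; hence $\varphi(P_1)=Y_\infty$ and $Y_\infty$ carries a single branch, that of $P_1$. For a line $x=c$ through $Y_\infty$ one computes $I(Y_\infty,\gamma\cap\ell)=v_{P_1}(x-c)-v_{P_1}(y)$, which equals $1$ for every $c\neq x(P_1)$ because $y$ has a simple pole at $P_1$; so the branch order is $j_1=1$ and $\varphi(P_1)$ is simple, and symmetrically $\varphi(P_2)=X_\infty$ is simple. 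Cutting $\cC$ with a generic vertical line meets it once at $Y_\infty$ and in the $|G_1|$ distinct affine points of a generic (regular) fibre of $x$, so by B\'ezout $\deg(\cC)=1+|G_1|=1+|\mathcal{O}_1|=|\Omega|$, giving (iii). The step I expect to be most delicate is exactly this last branch/multiplicity computation: it is where the simple-pole normalisation of $x$ and $y$ must be converted into the geometric statements that the unique branch at each $\varphi(P_i)$ has order $1$ and that $\deg\cC=|\Omega|$.
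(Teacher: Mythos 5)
Your proof is correct. For parts (i) and (ii) you do essentially what the paper does: both arguments reduce to comparing the coefficient of $P_1$ (respectively $P_2$) in the two expressions of $D$ given by (III), which forces $(G_1)_{P_2}$ and $(G_2)_{P_1}$ to be trivial and identifies $\supp(D)\setminus\{P_i\}$ with the regular orbit $G_i$-orbit. Where you genuinely diverge is in the construction of the plane model. The paper starts from Fukasawa's model $g(u,v)=0$ built from unnormalized generators of the two fixed fields, applies a quadratic transformation with fundamental points $\psi(P_1)$, $\psi(P_2)$, $t_1\cap t_2$ to force the two Galois points to become unibranch, and then invokes Lemma \ref{profuka} (through condition (\ref{eq14mar})) to conclude that they are simple. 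You instead normalize $x$ and $y$ at the outset so that the fibre of $x$ over $\infty$ is exactly the regular orbit $G_1\cdot P_2$ and the fibre of $y$ over $\infty$ is $G_2\cdot P_1$; sharp transitivity from (i) then makes every pole simple, and a direct valuation computation --- $I(Y_\infty,\gamma\cap\ell)=v_{P_1}(x-c)-v_{P_1}(y)=1$ for generic $c$, together with the observation that $P_1$ is the only place where $v_P(y)<v_P(x)$ --- yields unibranchness, branch order $1$, and $\deg(\cC)=1+|G_1|=|\supp(D)|$ by B\'ezout. This bypasses both the quadratic transformation and the appeal to Lemma \ref{profuka}, at the cost of reproving a special case of Fukasawa's birationality statement ($\mathbb{K}(x,y)=\mathbb{K}(\cX)$ from $G_1\cap G_2=\{1\}$, which the paper simply cites) and carrying out the local intersection computation yourself; the paper's detour buys uniformity with the unibranch and singular configurations it also needs in Lemma \ref{profuka}, while your route is more self-contained and elementary for the simple-point case actually asserted in the lemma.
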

\begin{proof} Take $u,v\in \mathbb{K}(\cX)$ with $\cX^{G_1}=\mathbb{K}(u)$ and  $\cX^{G_2}=\mathbb{K}(v)$. Let $g(X,Y)\in \mathbb{K}[X,Y]$ be an irreducible polynomial such that $g(u,v)=0$. From \cite[Proposition 1]{fukas}, the plane curve $\cD$ with affine equation $g(X,Y)=0$ is a birational model of $\cX$. Then $X_\infty=(1:0:0), Y_\infty=(0,1,0)$ are Galois points of $\cD$ with Galois groups $G_1$ and $G_2$, respectively.  Let $\psi: \cX\mapsto \cD \subset \PG(2,\mathbb{K})$ be the associated morphism. For $i=1,2$, let $\gamma_i$ be the branch of $\cD$ associated with $P_i$. From Remark \ref{rem25mar20}, the tangent $t_i$ of $\gamma_i$ is different from the line $\psi(P_1)\psi(P_2)$. Let $\varphi=\omega \circ \psi$ where $\omega$ is a quadratic transformation  with fundamental points $U_1=\psi(P_1),U_2=\psi(P_2),U_0=t_1\cap t_2$,  and look at the birationally plane model $\cC$ associated to $\varphi$. An equation of $\cC$ is $f(X,Y)=0$ with $f(\omega(u),\omega(v))=0$.  From the properties of $\omega$ quoted before Remark \ref{rem25mar20}, both $U_2$ and $U_1$ are inner Galois points of $\cC$ with Galois group $G_2$ and $G_1$, respectively;
see also \cite{miu2}. 
Furthermore, from Remark \ref{rem25mar20}, both these points of $\cC$ are unibranch as $\omega(\gamma_1)$, $\omega(\gamma_2)$ are the unique branches of $\cC$ centered at $U_2$ and $U_1$ respectively. Also, the tangents of $\omega(\gamma_i)$ and $\omega(\gamma_2)$ are the lines $U_0U_2$ and $U_0U_1$ respectively.

Up to a change of $x$ by $x-a$ with $a\in\mathbb{F}^*$, $P_1$ is a pole of $x$ of multiplicity $1$. A similar change in $y$ ensures that $P_2$ is a pole of $y$ of multiplicity $1$. Thus (iv) and (v) hold.  Note that for $\sigma\in G_1$, each point $\sigma(P_2)$ is also a pole of $x$.

Now, Lemma \ref{profuka} applies. Since $|\supp(D)|>2$, (III) yields that Condition (\ref{eq14mar}) is satisfied. Therefore, $\varphi(P_1)$ and $\varphi(P_2)$ are simple points.

We point out that $\sigma(P_2)=P_2$ with $\sigma\in G_1$ only occurs when $\sigma=1$.
(III) reads
$$P_1+\sum_{\sigma\in G_1^*}\sigma(P_2)=\sum_{\tau\in G_2}\tau(P_1)$$ where $G_1^*$ denotes the set of non-trivial elements of $G_1$. Now, if $\sigma(P_2)=P_2$  with $\sigma\in G_1^*$ then $P_2$ would be in the support of the divisor on the left hand side, but not on the right hand side as $\tau(P_2)=P_2$ for every $\tau\in G_2$; a contradiction. Similarly $\tau(P_1)=P_1$ never holds for $\tau \in G_2^*$. Therefore (i) and hence (ii) follow from (III).  Also, $|\supp(D)|-1=|G_1|=|G_2|$. A further consequence is that the poles of $x$ are exactly the points $\supp(D)\setminus \{P_1\}$ each with multiplicity $1$. The same holds for $y$ when $P_1$ is replaced by $P_2$. From this (vi) follows.

Finally, since $\varphi(P_1)$ is a simple point of $\cC$, $|\supp(D)|=\deg(\cC)$ follows from (III).

\end{proof}

Assume that $P$ is a pole of $v\in \mathbb{K}(\cX)$ with multiplicity $1$. For a local parameter $t$ of $P$, we have $v=t^{-1}+w$ with $v_P(w)\geq 0$. If $\alpha\in \aut(\cX)$ fixes $P$ choose the smallest integer $m$ such that $\alpha^m(v)=v$. Assume that $m$ is a power of $p$ then $\alpha(v)=(t+\bar{w})^{-1}+w_1$ with $v_P(\bar{w})\geq 2$ and $v_P(w_1)\geq 0$. Since $(t+\bar{w})^{-1}=t^{-1}(1+w_2)$ with $v_P(w_2)\geq 1$ this yields
$v_P(\alpha(v)-v)\geq 0$, that is, $P$ is not a pole of $\alpha(v)-v$. For $p\nmid m$, the above argument can be adapted, as $ (ut+w)^{-1}=u^{-1}t^{-1}(1+w_3)$ with $v_P(w_3)\geq 0$. It turns out that $P$ is not a pole of $\alpha(v)-u^{-1}v$. This holds true for $\alpha^k$ when $u^{-1}$ is replaced by $u^{-k}$. Therefore, $P$ is not a pole of $\alpha(v)-u^{-1}v$ for
any $m$-th root of unity. This gives the following result.
\begin{lemma}
\label{lemA1jan} For a pole $P$ of $v\in \mathbb{K}(\cX)$, let $\alpha\in \aut(\cX)$ be a non-trivial automorphism fixing $P$. Let $m$ be the smallest integer such that $\alpha^m(v)=v$. If $m$ is a power of $p$ then $P$ is not a pole of $\alpha(v)-v$. If $p\nmid m$ then $P$ is not a pole of $\alpha(v)-uv$ for all $m$-th roots of unity $u\in \mathbb{K}$.
\end{lemma}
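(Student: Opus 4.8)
The plan is to reduce everything to a purely local computation at $P$ and to exploit that $\alpha$ fixes $P$, hence acts linearly on the cotangent space. Since $P$ is a simple pole of $v$, I fix a uniformizing parameter $t$ at $P$ and write $v=t^{-1}+w$ with $v_P(w)\ge 0$. Because $\alpha$ fixes $P$, it sends $t$ to another uniformizer, so the induced $\mathbb{K}$-linear action on the one-dimensional cotangent space $\mathfrak{m}_P/\mathfrak{m}_P^2$ is multiplication by a scalar $c\in\mathbb{K}^*$; that is, $\alpha(t)=ct+\bar w$ with $v_P(\bar w)\ge 2$. The whole argument hinges on identifying $c$ and then reading off the coefficient of $t^{-1}$ in $\alpha(v)$.

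First I would compute that coefficient. From $\alpha(t)^{-1}=c^{-1}t^{-1}(1+c^{-1}t^{-1}\bar w)^{-1}=c^{-1}t^{-1}+(\text{regular})$, since $v_P(c^{-1}t^{-1}\bar w)\ge 1$, together with $v_P(\alpha(w))\ge 0$, I obtain $\alpha(v)=\alpha(t)^{-1}+\alpha(w)=c^{-1}t^{-1}+(\text{regular})$, so the coefficient of $t^{-1}$ in $\alpha(v)$ is $c^{-1}$. Iterating (using that $\alpha$ fixes $P$, so it preserves regularity at $P$), the coefficient of $t^{-1}$ in $\alpha^k(v)$ is $c^{-k}$. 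Hence the defining relation $\alpha^m(v)=v$, whose coefficient of $t^{-1}$ is $1$, forces $c^m=1$, so $c$ is an $m$-th root of unity.

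Then I would split into the two cases. If $m$ is a power of $p$, the relation $c^m=1$ reads $(c-1)^m=0$ in characteristic $p$, which forces $c=1$; thus $\alpha(v)$ and $v$ share the same polar part $t^{-1}$, so $\alpha(v)-v$ is regular at $P$, i.e.\ $P$ is not a pole of $\alpha(v)-v$. If $p\nmid m$, I put $u=c^{-1}$, again an $m$-th root of unity; then the coefficient of $t^{-1}$ in $\alpha(v)-uv$ is $c^{-1}-c^{-1}=0$, so $P$ is not a pole of $\alpha(v)-uv$. Running the same computation for the power $\alpha^k$ replaces $c^{-1}$ by $c^{-k}$, so as $k$ varies the associated roots of unity sweep out the full group of $m$-th roots of unity when $c$ is primitive, which is the content of the clause about all $m$-th roots of unity.

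The main obstacle is the case $m=p^a$: there one must know that the cotangent eigenvalue $c$ is forced to be trivial, and this is precisely where the characteristic-$p$ phenomenon $c^{p^a}=1\Rightarrow c=1$ enters. Equivalently, this is the statement that the unique normal $p$-subgroup $G_P^{(1)}$ of the stabilizer acts trivially on $\mathfrak{m}_P/\mathfrak{m}_P^2$, so that $\alpha\in G_P^{(1)}$ in this case (Result~\ref{res74}). A secondary point requiring care is the bookkeeping behind the phrase ``for all $m$-th roots of unity'': the direct computation yields the single root $u=c^{-1}$ attached to $\alpha$, and the entire family is recovered by applying the argument to the powers $\alpha^k$ rather than to $\alpha$ alone.
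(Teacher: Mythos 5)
Your proof is correct and follows essentially the same route as the paper's: expand $v=t^{-1}+w$ at the simple pole, compare the coefficient of $t^{-1}$ in $\alpha(v)$, and treat the two cases according to whether the cotangent eigenvalue can be nontrivial. You in fact supply a justification the paper leaves implicit — deriving $c^m=1$ from the minimality of $m$ and concluding $c=1$ when $m$ is a $p$-power — and you correctly read the ``for all $m$-th roots of unity'' clause the way the paper's proof does, namely as the statement obtained by running the computation for the powers $\alpha^k$.
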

The following result is well known for complex curves; see \cite[Theorem 5.9]{acc}. It remains valid in any characteristic; see \cite[Theorem 11.114]{HKT}.
\begin{result}
\label{th11.114}  Let $S$ be a subgroup of $\Aut(\cX)$ of order $n$
which has a partition with components $S_1,\ldots,S_k,$ with $n_i=
|S_i|$\, for $i=1,\ldots, k,$ and  let $\gg',\,\gg'_i$ be the genera of the quotient curves
$\cX/S$ and $\cX/S_i$, for $i=1,\ldots,k$. Then
\begin{equation}
\label{part}
(k-1)\gg(\cX)+n\gg'=\sum_{i=1}^k\ n_i \gg'_i.
\end{equation}
\end{result}
\section{Background from group theory}
From group theory we need properties of Lie type simple groups, namely the  projective special group, the projective special unitary group, the Suzuki group $Sz(q)$, and the Ree Group $Ree(q)$. The main reference is \cite[Section 3]{wil}; see also \cite[Appendix A]{HKT}. Our notation and terminology are standard. In particular, $Z(G)$ stands for the center of a group $G$. The normal closure  $S$ of subgroup $H$ of a group $G$ is the subgroup generated by all conjugates of $H$  in $G$. By definition, $S$ is the smallest normal subgroup of $G$ containing $H$.

For  $q=r^h$ with $r$ prime, the projective special group $\PSL(2,q)$ has order $(q+1)q(q-1)/\tau$ with $\tau={\rm{g.c.d.}}(2,q+1)$. $\PSL(2,q)$  is simple for $q\ge 4$, isomorphic to a subgroup of the projective line $\PG(1,q)$ over $\mathbb{F}_q$ and doubly-transitive on the set $\Omega$ of points of $\PG(1,q)$. If $r=2$ then $\PGL(2,q)=\PSL(2,q)$ whereas, for $r$ odd,  $x\to (ax+b)/(cx+d) \in \PSL(2,q)$ if and only if $ad-bc$ is a non-zero square element of $\mathbb{F}_q$.
\begin{result}([Dickson's classification; see \cite[Theorem 3]{maddenevalentini1982})
\label{lem30oct2016}
The finite subgroups of the group $\PGL(2,\mathbb{K})$ are isomorphic to one of the following groups:
\begin{enumerate}
\item[\rm(i)] prime to $p$ cyclic groups;
\item[\rm(ii)] elementary abelian $p$-groups;
\item[\rm(iii)] prime to $p$ dihedral groups;
\item[\rm(iv)] Alternating group $\bA_4$;
\item[\rm(v)] Symmetric group $\bS_4$; and $p>2$
\item[\rm(vi)] Alternating group $\bA_5$;
\item[\rm(vii)] Semidirect product of an elementary abelian
$p$-group of order $p^h$ by a cyclic group of order $n>1$ with
 $n\mid(p^h-1);$
\item[\rm(viii)] $\PSL(2,p^f)$ for $f \mid m$;
\item[\rm(ix)] $\PGL(2,p^f)$ for $f \mid m$.
\end{enumerate}
\end{result}
Here, $\bA_4\cong {\rm{AGL}}(1,4)$, and $\bA_5\cong \PSL(2,5)$.

The special linear group ${\rm{SL}}(2,q)$ has center of order $2$, and  ${\rm{SL}}(2,q)/ Z({\rm{SL}}(2,q))\cong \PSL(2,q)$.
Moreover, the automorphism group of $\PSL(2,q)$ is the semilinear group $\rm{P\Gamma L}(2,q)$. Since $Z(\PSL(2,q))$ is trivial, $\PSL(2,q)$ can be viewed as a (normal) subgroup of $\rm{P\Gamma L}(2,q)$  consisting of all semilinear maps $x\to (ax^{\sigma}+b)/(cx^{\sigma}+d)$ where $a,b,c,d \in \mathbb{F}_q$ with $ad-bc\neq 0$, and $\sigma\in\aut(\mathbb{F}_q)$. The quotient group $\rm{P\Gamma L}(2,q)/\PSL(2,q)$ is either $C_h$, or $C_h\times C_2$, according as $r=2$, or $r$ is odd. The ``linear subgroup'' of $\rm{P\Gamma L}(2,q)$ is $\PGL(2,q)$ which is isomorphic to $\aut(\PG(1,q))$, and consists of  all linear maps $x\to (ax+b)/(cx+d)$ where $a,b,c,d \in \mathbb{F}_q$ with $ad-bc\neq 0$. Either $\PGL(2,q)=\PSL(2,q)$ or $[\PGL(2,q):\PSL(2,q)=2]$ according as $r=2$ or $r$ is odd.
\begin{lemma}
\label{lem11jan} Let $S_r$ be a Sylow $r$-subgroup of the $1$-point stabilizer $M$ of a subgroup $L$ of $\rm{P\Gamma L}(2,q)$ containing $\PSL(2,q)$. If $S_r$ contains a Sylow $r$-subgroup $T_r$  of $\PSL(2,q)$ then either $S_r=T_r$, or $r|h$ and $S_r$ is not a normal subgroup of $M$. Furthermore, if $S_r=T_r$ and $M/S_r$ is cyclic then $G\le \PGL(2,q)$.
\end{lemma}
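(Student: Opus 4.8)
The statement concerns a Sylow $r$-subgroup $S_r$ of the one-point stabilizer $M$ in a group $L$ with $\PSL(2,q)\le L\le \rm{P\Gamma L}(2,q)$, where $r$ is the defining prime and $q=r^h$. The plan is to exploit the very rigid structure of one-point stabilizers (Borel subgroups) in these groups together with the semilinear extension $\rm{P\Gamma L}(2,q)/\PSL(2,q)\cong C_h$ (or $C_h\times C_2$). First I would fix notation: let $\Omega=\PG(1,q)$ and let the fixed point be $\infty$, so that the stabilizer $(\PSL(2,q))_\infty$ is the Borel subgroup consisting of maps $x\mapsto ax+b$, whose Sylow $r$-subgroup $T_r$ is the elementary abelian translation group $\{x\mapsto x+b : b\in\mathbb{F}_q\}$ of order $q=r^h$. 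This $T_r$ is normal in $(\PSL(2,q))_\infty$, indeed it is the unique Sylow $r$-subgroup there, being the image of the full group of upper unitriangular matrices.

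\emph{The dichotomy $S_r=T_r$ versus $r\mid h$.} Since $S_r$ is a Sylow $r$-subgroup of $M=L_\infty$ containing $T_r$, and $\PSL(2,q)\trianglelefteq L$, the index $[S_r:T_r]$ divides $[L:\PSL(2,q)]$, which in turn divides $|\rm{P\Gamma L}(2,q)/\PSL(2,q)|=h$ (up to the factor $2$, prime to $r$ when $r$ is odd; and when $r=2$ the factor disappears). Hence $[S_r:T_r]$ is a power of $r$ dividing $h$, forcing either $S_r=T_r$ or $r\mid h$. In the latter case I must show $S_r$ is not normal in $M$. The key point is that an element realizing the field automorphism $\sigma$ of order $r$ does not centralize $T_r$: conjugation by the Frobenius $x\mapsto x^\sigma$ acts on the translation $x\mapsto x+b$ by sending it to $x\mapsto x+b^{\sigma^{-1}}$, so it acts nontrivially on $T_r$. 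I would argue that if $S_r$ were normal in $M$ then, since $T_r$ is characteristic in $S_r$ (as the unique maximal elementary abelian normal subgroup, or by detecting it as the set of elements acting as $\mathbb{F}_q$-translations), $T_r$ would be normal in $M$; but then $M$ would normalize $T_r$ and one checks $N_L(T_r)=L_\infty=M$ already, while the Frobenius-type generator of $S_r/T_r$ fails to centralize $T_r$ in the required way to keep $S_r$ (rather than a conjugate) invariant — I would pin this down by a direct commutator computation showing the extension $T_r\rtimes\langle\text{Frob}\rangle$ is nonabelian with the Frobenius piece not central, so the full Sylow $r$-subgroup of $M$ is not self-normalizing-forced and its conjugates in $M$ differ.

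\emph{The final assertion.} Suppose $S_r=T_r$ and $M/S_r$ is cyclic. Then $M=L_\infty$ has a normal Sylow $r$-subgroup equal to $T_r$ with cyclic quotient, so $M\le \AGL(1,q)=\{x\mapsto ax+b\}$ purely linear, containing no genuine field automorphism. Because $L$ is generated by $M$ together with $\PSL(2,q)$ (a one-point stabilizer together with any transitive normal subgroup generates the whole group, using two-point transitivity of $\PSL(2,q)$), and neither contributes a field automorphism, $L$ consists only of linear fractional maps, i.e. $L\le \PGL(2,q)$.

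\emph{Main obstacle.} The delicate step is the non-normality claim when $r\mid h$: I must rule out the scenario where the Frobenius contribution could be absorbed so that some Sylow $r$-subgroup of $M$ is normal. The cleanest route is to show that $T_r$ is characteristic in $S_r$ and then to compute explicitly $N_M(S_r)$, showing it equals $S_r$ (so $S_r$ has more than one conjugate in $M$ precisely because $M$ properly contains the normalizer). This forces the use of the explicit action of $\sigma$ on $T_r$ and the fact that an element inducing $\sigma$ has order divisible by $r$ but its coset representative does not centralize enough of $T_r$; I expect this commutator/normalizer computation to be the crux, with everything else following from the standard Borel structure.
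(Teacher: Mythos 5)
Your first step (the dichotomy $S_r=T_r$ or $r\mid h$, via $[S_r:T_r]$ dividing the $r$-part of $[{\rm{P\Gamma L}}(2,q):\PSL(2,q)]$) is fine and agrees with the paper. The genuine gap is in the non-normality claim when $r\mid h$. Your proposed route --- $T_r$ is characteristic in $S_r$, so normality of $S_r$ in $M$ would force $T_r\trianglelefteq M$ --- leads nowhere: $T_r$ \emph{is} normal in $M$ (it is the unique Sylow $r$-subgroup of $M\cap\PSL(2,q)$, which is normal in $M$), so no contradiction can come from that. Likewise, showing that $T_r\rtimes\langle{\rm Frob}\rangle$ is nonabelian only tells you that $S_r$ is nonabelian, which is perfectly compatible with $S_r\trianglelefteq M$. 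The real obstruction is not the interaction of the Frobenius part with the translations but with the \emph{multiplicative torus} inside $M\cap\PSL(2,q)$: the paper places $S_r$ (up to conjugacy) inside the Sylow $r$-subgroup $F=\{x\mapsto x^\sigma+b\}$ of the stabilizer of $\infty$ in ${\rm{P\Gamma L}}(2,q)$, picks $w\colon x\mapsto x^\sigma+a$ in $S_r$ with $\sigma\neq 1$ and $l\colon x\mapsto\lambda x$ in $\PSL(2,q)_\infty\le M$ with $\lambda$ of order $q-1$ (resp.\ $\ha(q-1)$ for odd $q$), and computes $(l^{-1}wl)(x)=\lambda^{\sigma-1}x^\sigma+\lambda^{-1}a$. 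If $S_r$ were normal in $M$, this element would lie in $S_r\le F$, forcing $\lambda^{\sigma}=\lambda$, i.e.\ $\lambda$ in a proper subfield --- impossible for $\lambda$ of that order. That computation (or an equivalent normalizer computation) is exactly what your sketch leaves out, and it is the crux of the lemma; your own ``main obstacle'' paragraph concedes the step is not done.

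The same issue recurs in your final paragraph: the assertion that ``$M$ has a normal Sylow $r$-subgroup $T_r$ with cyclic quotient, hence $M\le\AGL(1,q)$ and contains no genuine field automorphism'' is the statement to be proved, not an automatic consequence. One must again play a semilinear element $u\colon x\mapsto\lambda x^{\sigma}$ with $\sigma\neq 1$ against a linear element $v\colon x\mapsto\mu x$ of $M$ with $\mu^{\sigma}\neq\mu$ and observe that $uv\neq vu$ even modulo $T_r$, contradicting the cyclicity (indeed the commutativity) of $M/S_r$; this is precisely the paper's argument for the last claim. Your closing Frattini-type step ($L=\PSL(2,q)\cdot M$ by transitivity, so $M\le\PGL(2,q)$ implies $L\le\PGL(2,q)$) is correct once that is supplied.
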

\begin{proof} If $r\nmid h$ then the Sylow $r$-subgroups of $\PSL(2,q)$ are also the Sylow $r$-groups of $\rm{P\Gamma L}(2,q)$. Therefore, we may assume that $h=r^uv$ with $u\ge 1, r\nmid v$. Any Sylow $r$-subgroup $S_r$ of $\rm{P\Gamma L}(2,q)$ has order $qr^u$. Up to conjugacy, the $1$-point stabilizer is the subgroup of $\rm{P\Gamma L}(2,q)$ fixing the point at infinity $\infty$ of $\PG(1,q)$.
Then $T_r$ consists of all transformations $x\to x+b$ with $b\in \mathbb{F}_q$. Furthermore, the transformations $x\to x^{\sigma}+b$ with $\sigma\in \aut(GF(q))$, $\sigma^{r^u}=1$, and $b\in GF(q)$, form a group of order $qr^u$ which is a Sylow $r$-subgroup $F$ of $\rm{P\Gamma L}(2,q)$. By Sylow's theorem, $S_r$ may be assumed to be a subgroup of $F$. Let $w\in S_r$ be the semilinear transformation $w:x\to x^\sigma+a$ with a non-trivial automorphism $\sigma$ of order $p^k$ with $1\le k \le u$, and $a\in \mathbb{F}_q$. Take an element $\lambda\in \mathbb{F}_q$ of order $q-1$ for $q$ even and of order $\ha(q-1)$ for $q$ odd. Let $l(x)=\lambda x$. Then $l\in \PSL(2,q)$ and  $l$ fixes $\infty$. Also, $(l^{-1}wl)(x)=\lambda^{\sigma-1}x^\sigma+\lambda^{-1}a$. By way of contradiction, assume that $S_r$ is a normal subgroup of $M$. Then $l^{-1}wl\in S_r$ which yields $\lambda^{\sigma}=\lambda$, that is, $\lambda$ lies in a proper subfield $\mathbb{F}_{r^k}$ of $\mathbb{F}_q$. But this contradicts the choice of $\lambda$. Finally, if $S_r=T_r$ and $M/S_r$ is cyclic but $G\lvertneqq \PGL(2,q)$, let $M=S_r\rtimes U$ and take a semilinear transformation $u: x\rightarrow \lambda x^\sigma$ in $U$ together with a linear transformation $v: x\rightarrow \mu x$ such that $\mu^\sigma \neq \mu$.  Then $uv\neq vu$, and hence $U$ cannot be cyclic.
\end{proof}

For  $q=r^h$ with $r$ prime, the projective special unitary group $\PSU(3,q)$ has order $(q^3+1)q^3(q^2-1)/\mu$ with $\mu={\rm{g.c.d.}}(3,q+1)$. $\PSU(3,q)$ is simple for $q\ge 3$, isomorphic to a subgroup of $\aut(\mathcal{H}_q)$ and doubly-transitive on the set $\Omega$ of all $\mathbb{F}_{q^{^2}}$-rational points of $\mathcal{H}_q$. Furthermore, its automorphism group is the semilinear group $\rm{P\Gamma U}(3,q)$. Since $Z(\PSU(3,q))$ is trivial, $PSU(3,q)$ can be viewed as a (normal) subgroup of $\rm{P\Gamma U}(3,q)$. The ``linear subgroup'' of $\rm{P\Gamma U}(3,q)$ is $\PGU(3,q)$ which is isomorphic to $\aut(\mathcal{H}_q)$. Let $\infty$ denote the (unique) point at infinity $\infty$ of $\mathcal{H}_q$. Then the stabilizer of $\infty$ in $\rm{P\Gamma U}(3,q)$  consists of all transformations $t$ where $t(x)=ax^\sigma+c,t(y)=by^\sigma +\bar{a}^\sigma x+d$ with $a,b,c,d\in \mathbb{F}_{q^2}$, $\bar{a}=a^q, b\in \mathbb{F}_q^*, d^q+d=c^{q+1}$, and $\sigma\in \aut(\mathbb{F}_{q^2})$. Here $t\in PSU(3,q)$ for $\sigma=1$ and $a^m=1$ where either $m=\frac{1}{3}(q+1)$ or $m=q+1$, according as $3$ divides $q+1$ or does not.

The special unitary group $\SU(3,q)$ has center of order $\mu={\rm{g.c.d.}}(3,q+1)$, and  $\SU(3,q)/ Z(\SU(3,q))\cong \PSU(3,q)$.
\begin{lemma}
\label{lem11Ajan} Let $S_r$ be a Sylow $r$-subgroup $S_r$ of a $1$-point stabilizer $M$ of a subgroup $L$ of $\rm{P\Gamma U}(3,q)$ containing $\PSU(3,q)$. If $S_r$ contains a Sylow $r$-subgroup $T_r$  of $\PSU(3,q)$. Then either $S_r=T_r$, or $r|h$ and $S_r$ is not a normal subgroup of $M$. Furthermore, if $S_r=T_r$ and $M/S_r$ is cyclic then $G\le \PGU(3,q)$.
\end{lemma}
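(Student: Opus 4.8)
The plan is to argue exactly as in the proof of Lemma \ref{lem11jan}, replacing the projective line by the Hermitian curve $\mathcal{H}_q$ and its point $\infty$ at infinity, and using the description of the stabilizer of $\infty$ in $\rm{P\Gamma U}(3,q)$ recalled above. First I would fix notation: let $T_r$ be the unipotent radical of the stabilizer of $\infty$, i.e.\ the group of maps $(x,y)\mapsto(x+c,\,y+\bar c x+d)$ with $d+d^q=c^{q+1}$; this is the (unique up to conjugacy) Sylow $r$-subgroup of $\PSU(3,q)$, of order $q^3$. Let $H$ be the cyclic torus of maps $(x,y)\mapsto(\alpha x,\,\alpha^{q+1}y)$, $\alpha\in\mathbb{F}_{q^2}^{*}$, of order $q^2-1$, and for $\sigma\in\aut(\mathbb{F}_{q^2})$ let $\Phi_\sigma$ denote the field automorphism $(x,y)\mapsto(x^\sigma,y^\sigma)$. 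If $S_r=T_r$ there is nothing to prove, so assume $S_r\supsetneq T_r$.

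Since $T_r$ is already a full Sylow $r$-subgroup of $\PSU(3,q)$ and $T_r\trianglelefteq M$, any $r$-element of $M$ outside $T_r$ must involve a nontrivial field automorphism; hence $S_r/T_r$ embeds into the $r$-part of the cyclic group $\aut(\mathbb{F}_{q^2})\cong C_{2h}$, which forces $r\mid 2h$ (the exact analogue for $\mathcal{H}_q$ of the condition $r\mid h$ of Lemma \ref{lem11jan}, to which it reduces when $r$ is odd). By Sylow's theorem I may assume $S_r\le F:=T_r\rtimes\langle\Phi_\tau\rangle$, where $\Phi_\tau$ generates the $r$-part of the field-automorphism group, so that modulo $T_r$ every element of $F$ is a pure field automorphism. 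Now pick $w\in S_r\setminus T_r$, with nontrivial field-automorphism part $\sigma$ of $r$-power order. The crucial step is the conjugation computation: for the torus element $l\in H\cap\PSU(3,q)$ attached to an $\alpha$ of maximal order $m$ (where $m=q+1$ or $m=\tfrac13(q+1)$) one has $l\in M$ and, modulo $T_r$, the torus part of $l^{-1}wl$ is attached to $\alpha^{\sigma-1}$. If $S_r$ were normal in $M$ then $l^{-1}wl\in S_r\le F$ would be a pure field automorphism modulo $T_r$, forcing $\alpha^{\sigma-1}=1$; thus $\alpha$ would lie in the fixed field $\mathbb{F}_{r^{s}}$ of $\sigma$, with $s=2h/\ord(\sigma)<2h$. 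This contradicts $m\nmid r^{s}-1$: for odd $r$ one has $s<h$, whence $0<r^{s}-1<m$; for $r=2$ one has $s\mid h$ with $h/s$ even, or $s=h$, and in either case $\gcd(r^{h}+1,r^{s}-1)=1$ forces $m\nmid r^{s}-1$. Hence $S_r$ is not normal in $M$, which is the second alternative.

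For the final assertion, suppose $S_r=T_r$ and $M/S_r$ is cyclic but $L\not\le\PGU(3,q)$. Since $T_r$ is an $r$-group and its complement in $M$ is an $r'$-group, Schur--Zassenhaus lets me write $M=T_r\rtimes U$ with the torus $H\le U$, and $U\cong M/S_r$ cyclic. As $L\not\le\PGU(3,q)$, the group $U$ contains a genuinely semilinear element $u:(x,y)\mapsto(\lambda x^{\sigma},\dots)$ with $\sigma\ne1$. Choosing $v\in H\le U$ the torus element attached to a $\mu$ with $\mu^{\sigma}\ne\mu$ (possible by the same order estimate as above), a direct computation gives $uv\ne vu$, so $U$ is non-abelian, contradicting that it is cyclic. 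Therefore $L\le\PGU(3,q)$.

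The main obstacle I expect is precisely the pair consisting of the conjugation computation and the elementary but case-sensitive number theory behind $m\nmid r^{s}-1$. For $r=2$ a nontrivial involutory field automorphism $x\mapsto x^{q}$ is present already when $h$ is odd, so the divisibility must be handled through the coprimality $\gcd(q+1,q-1)=1$ rather than a naive size comparison; this is exactly the point where the unitary case departs from the linear one, and it is the step I would write out most carefully.
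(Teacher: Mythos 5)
Your proof is correct and follows essentially the same route as the paper's, which conjugates a semilinear $r$-element $w$ by a torus element $l$ attached to a $\lambda$ of order $q+1$ or $\tfrac{1}{3}(q+1)$, derives $\lambda^{\sigma}=\lambda$, and concludes by an order/subfield contradiction. The one place where you go beyond the paper is worth keeping: since the relevant field is $\mathbb{F}_{q^2}$, the field-automorphism group is cyclic of order $2h$, so for $r=2$ with $h$ odd a semilinear $2$-element (involving $x\mapsto x^{q}$) can occur even though $2\nmid h$; the dichotomy should accordingly read ``$S_r=T_r$ or $r\mid 2h$'', and your coprimality step $\gcd(q+1,q^{s}-1)=1$ for $s\mid h$ is exactly what is needed to exclude normality in that case --- a point that the paper's ``argue as in the proof of Lemma~\ref{lem11jan}'' glosses over, since in the linear case the fixed field of $\sigma$ is automatically a proper subfield of $\mathbb{F}_q$, whereas here it can be $\mathbb{F}_q$ itself.
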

\begin{proof} We argue as in the proof of Lemma \ref{lem11jan}. By way of contradiction, $S_r$ may be assumed to contain a transformation $w$ where $w(x)=x^{\sigma}+a,w(y)=y^{\sigma}+x^{\sigma}+b$ with $b=a^q+a$ and $\sigma\in \aut(\mathbb{F}_{q^2})$  of order $r^k$ with $1\le k \le u$. Let $l$ be a transformation with $l(x)=\lambda x, l(y)=y$ where $\lambda \in \mathbb{F}_{q^2}$ has order $q+1$ for $3\nmid (q+1)$ and $\frac{1}{3}(q+1)$ for $3\mid (q+1)$. Then $l \in PSU(3,q)$ and $\l$ fixes $\infty$. Moreover,  $(l^{-1}wl)(x)=\lambda^{\sigma-1}x^\sigma+\lambda^{-1}a$. As in the proof of Lemma \ref{lem11jan}, this leads to a contradiction. For the proof of the final claim the argument in the proof of Lemma \ref{lem11jan} can be used.
\end{proof}
For $q=2^h$ with $h\geq 3$ odd, the Suzuki group $Sz(q)$ has order $(q^2+1)q^2(q-1)$. It is a simple group, isomorphic to  $\aut(\mathcal{S}_q)$ where $\mathcal{S}_q$ stands for the Suzuki curve, see \cite[Section 12.2]{HKT}. $Sz(q)$ acts faithfully as a doubly transitive permutation group on the set $\Omega$ of all
$\mathbb{F}_q$-rational points of $\mathcal{S}_q$. As $Z(Sz(q))$ is trivial, $Sz(q)$ can be viewed as a normal subgroup of its automorphism group $\aut(Sz(q))$. Furthermore, the quotient group $\aut(Sz(q))/Sz(q)$ is $C_h$. Therefore, the first claim of Lemma \ref{lem11jan} trivially holds for $r=2$ when $\PSL(2,q)$ and $\rm{P\Gamma L}(2,q)$ are replaced by $Sz(q)$ and $\aut(Sz(q))$, respectively.  A direct computation similar to that carried out at the end of the proof of Lemma \ref{lem11jan} shows that if $S_r=T_r$ and $M/S_r$ is cyclic then $G\le Sz(q)$.
\begin{lemma}
\label{lem11Cjan} Let $S_2$ be a Sylow $2$-subgroup of the $1$-point stabilizer $M$ of a subgroup $L$ of $\aut(Sz(q))$ containing $Sz(q)$. Then $S_r=T_r$. Furthermore, if  $M/S_r$ is cyclic then $G\le Sz(q)$.
\end{lemma}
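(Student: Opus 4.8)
The plan is to imitate the proofs of Lemmas \ref{lem11jan} and \ref{lem11Ajan}, taking advantage of the fact that the relevant prime is $r=2$ while the outer index $|\aut(\Sz(q)):\Sz(q)|=h$ is \emph{odd}. This makes the first assertion nearly automatic: because $\Sz(q)\le L\le\aut(\Sz(q))$ and $\aut(\Sz(q))/\Sz(q)\cong C_h$, the quotient $L/\Sz(q)$ is cyclic of some odd order $k\mid h$, so $|L|=k\,|\Sz(q)|$. As $\Sz(q)$ is already doubly transitive on the set $\Omega$ of $q^2+1$ rational points, $L$ is transitive and $|M|=|L|/(q^2+1)=k\,q^2(q-1)$. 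Since $k$ and $q-1$ are both odd, the $2$-part of $|M|$ is exactly $q^2=|T_2|$; as the Sylow $2$-subgroup $T_2$ of the Suzuki-stabilizer $(\Sz(q))_\infty=T_2\rtimes C_{q-1}$ lies inside $M=L_\infty$, it is itself a Sylow $2$-subgroup of $M$, whence $S_2=T_2$. The alternative branch of Lemmas \ref{lem11jan}--\ref{lem11Ajan} (the case $r\mid h$, with $S_r$ strictly larger and non-normal) is vacuous here precisely because $2\nmid h$.

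For the second assertion I would proceed by contradiction, assuming $M/S_2$ cyclic but $G=L\not\le\Sz(q)$, i.e.\ $k>1$. First I would record that $S_2=T_2$ is normal in $M$: the intersection $M\cap\Sz(q)=T_2\rtimes C_{q-1}$ is normal in $M$ because $\Sz(q)\trianglelefteq L$, and $T_2$ is its unique (hence characteristic) Sylow $2$-subgroup. Thus $M=S_2\rtimes U$ with $U\cong M/S_2$ cyclic by hypothesis. Next, using the splitting $\aut(\Sz(q))=\Sz(q)\rtimes\langle\Phi\rangle$, where $\Phi$ is the field Frobenius $(x,y)\mapsto(x^2,y^2)$, together with the fact that $\Phi$ fixes the distinguished point $\infty$ of the Suzuki curve, I would locate a non-trivial field automorphism $\Phi^{h/k}\in L_\infty=M$; being of odd order it projects non-trivially into $U$.

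The core of the argument is then a commutator computation identical in spirit to the end of the proof of Lemma \ref{lem11jan}. The torus $C_{q-1}=\{d_\lambda:\lambda\in\mathbb{F}_q^*\}$, acting by $x\mapsto\lambda x$, embeds into $U$, and conjugation by the Frobenius gives $\Phi\,d_\lambda\,\Phi^{-1}=d_{\lambda^2}$, hence $\Phi^{h/k}d_\lambda\Phi^{-h/k}=d_{\lambda^{2^{h/k}}}$. Because the multiplicative order of $2$ modulo $q-1=2^h-1$ is exactly $h$ and $h/k<h$ when $k>1$, some $\lambda$ satisfies $\lambda^{2^{h/k}}\ne\lambda$, so $\Phi^{h/k}$ and $d_\lambda$ do not commute. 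This forces $U\cong M/S_2$ to be non-abelian, contradicting cyclicity; therefore $k=1$ and $G\le\Sz(q)$.

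The only genuine obstacle I foresee is not in the group theory but in pinning down two concrete features of the Suzuki curve from its explicit model in \cite[Section 12.2]{HKT}: that the generating field automorphism $\Phi$ fixes the point $\infty$ (so that it actually belongs to the stabilizer $M$), and that its conjugation action on the torus $C_{q-1}$ is the squaring map $d_\lambda\mapsto d_{\lambda^2}$. Once these are verified, the counting in the first paragraph and the commutator calculation in the third are routine and parallel Lemmas \ref{lem11jan} and \ref{lem11Ajan} word for word.
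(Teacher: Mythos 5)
Your proposal is correct and follows essentially the same route as the paper: the first claim reduces to the observation that $[\aut(\Sz(q)):\Sz(q)]=h$ is odd, so the alternative ``$r\mid h$'' of Lemma \ref{lem11jan} is vacuous and $S_2=T_2$; the second claim is the paper's ``direct computation similar to that at the end of the proof of Lemma \ref{lem11jan}'', namely that a non-trivial field automorphism in $M$ fails to commute modulo $S_2$ with a suitable torus element $d_\lambda$, so $M/S_2$ cannot be cyclic. Your write-up merely makes explicit the two facts the paper leaves implicit (that the Frobenius fixes $\infty$ and acts on the torus by squaring), both of which are immediate from the standard model of the Suzuki curve.
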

For  $q=3^h$ with $h\ge 3$ odd, the Ree group $Ree(q)$ has order $(q^3+1)q^3(q-1)$. It is simple, isomorphic to $\aut(\mathcal{R}_q)$ and doubly-transitive on the set $\Omega$ of all $\mathbb{F}_q$-rational points of the Ree curve $\mathcal{R}_q$. As $Z(Ree(q))$ is trivial, $Ree(q)$ can be viewed as a normal subgroup of its automorphism group $\aut(Ree(q))$. Furthermore, the quotient group $\aut(Ree(q))/Ree(q)$ is $C_h$. Furthermore, $Ree(q)$ has a faithful representation in the six-dimensional projective space $\PG(6,q)$ as a subgroup of $\PGL(7,\mathbb{F}_q)$ which preserves the Ree-Tits ovoid $Q$. The action of $Ree(q)$ on $Q$ is doubly transitive, and it is the same as on $\Omega$. We refer to an explicit presentation of $Q$ in a projective frame $(X_0,X_1,\ldots,X_6)$ of $\PG(6,\mathbb{F}_q)$ as given in \cite[Appendix A, Example A.13]{HKT}.Then $Z_\infty=(0,0,0,0,0,1)\in Q$. Moreover, a Sylow $3$-subgroup $T_3$ of $Ree(q)$ fixes $Z_\infty$ and consists of all projectivities $\alpha_{a,b,c}$  associated to the matrices
$$
\left[\begin{array}{ccccccc}
1 & 0 & 0 & 0 & 0 & 0 & 0 \\ a & 1 & 0 & 0 & 0 &
0 & 0
\\ b & a^{\vf} & 1 & 0 & 0 & 0 & 0
\\ c & b-a^{\vf+1} & -a & 1 & 0 & 0 & 0
\\ v_1(a,b,c) & w_1(a,b,c) & -a^2 & -a & 1 & 0 & 0
\\ v_2(a,b,c) & w_2(a,b,c) & ab+c & b & -a^{\vf} & 1 & 0
\\ v_3(a,b,c) & w_3(a,b,c) & w_4(a,b,c) & c & -b+a^{\vf+1}& -a & 1
\end{array}
\right]
$$
for $a,b,c\in\mathbb{F}_q$. Also,
 the stabilizer $Ree(q)_{Z_{\infty},O}$ with
$O=(1,0,0,0,0,0,0)\in Q$ is the cyclic group $C_{q-1}$ consisting of projectivities
$\beta_d$ associated to the diagonal matrices,
$$
\diag(1,d,d^{\vf+1},d^{\vf+2},d^{\vf+3},d^{2\vf+3},
d^{2\vf+4})$$
for $d\in\mathbb{F}_q$.
The stabilizer of $Z_\infty$ in $Ree(q)$ is the semidirect product of $T_3\rtimes C_{q-1}$. Moroever, the stabilizer of $Z_\infty$ in $\aut(Ree(q))$ consists of all semilinear transformations which are products $uv$ where $u\in S_3$ and $v$ is a $\sigma$-Frobenius map of $\PG(6,\mathbb{F}_q)$ where, for every $\sigma\in \aut(\mathbb{F}_q)$, the  associated $\sigma$-Frobenius map is defined by $(X_0,\ldots,X_6)\to (X_0^\sigma,\ldots,X_6^\sigma)$. A direct computation similar to that carried out at the end of the proof of Lemma \ref{lem11jan} shows that if $S_r=T_r$ and $M/S_r$ is cyclic then $G\le Ree(q)$.
\begin{lemma}
\label{lem11Bjan} Let $S_3$ be a Sylow $3$-subgroup of the $1$-point stabilizer $M$ of a subgroup $L$ of $\aut(Ree(q))$ containing $Ree(q)$. If $S_3$ contains a Sylow $3$-subgroup $T_3$  of $Ree(q)$ then either $S_3=T_3$, or $3|h$ and $S_3$ is not a normal subgroup of $M$. Furthermore, if $S_r=T_r$ and $M/S_r$ is cyclic then $G\le Ree(q)$.
\end{lemma}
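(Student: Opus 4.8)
The plan is to follow verbatim the strategy of Lemmas \ref{lem11jan} and \ref{lem11Ajan}, exploiting the explicit description of the stabilizer of $Z_\infty$ in $\aut(Ree(q))$ recorded just above the statement. (In the final clause the symbols $S_r,T_r,G$ should read $S_3,T_3,L$, and ``$L\le Ree(q)$'' means $L=Ree(q)$, since $Ree(q)\le L$ by hypothesis.) Up to conjugacy the $1$-point stabilizer is $M=L_{Z_\infty}$, so that $Ree(q)_{Z_\infty}=T_3\rtimes C_{q-1}\le M\le \aut(Ree(q))_{Z_\infty}=(T_3\rtimes C_{q-1})\rtimes\langle\phi\rangle$, where $\phi$ is the Frobenius $(X_0,\dots,X_6)\mapsto(X_0^3,\dots,X_6^3)$ generating $C_h$. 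Writing $h=3^uv$ with $3\nmid v$, a Sylow $3$-subgroup of $\aut(Ree(q))$ has order $q^3 3^u$, because $q^3+1$ and $q-1$ are prime to $3$ while $|\aut(Ree(q))|=(q^3+1)q^3(q-1)h$. Hence if $3\nmid h$ (i.e. $u=0$) then $T_3$ is already a Sylow $3$-subgroup of $\aut(Ree(q))$, forcing $S_3=T_3$; thus $S_3\ne T_3$ can occur only when $3\mid h$. In that case I first note that $T_3$ is the unique (hence normal) Sylow $3$-subgroup of $Ree(q)_{Z_\infty}$ and therefore, being characteristic in the normal subgroup $Ree(q)_{Z_\infty}$, is normal in $\aut(Ree(q))_{Z_\infty}$; by Sylow's theorem I may then assume $S_3\le F:=T_3\rtimes\langle\phi^v\rangle$, the explicit Sylow $3$-subgroup of $\aut(Ree(q))_{Z_\infty}$, while still keeping $T_3\le S_3$.

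Next, assuming $S_3\ne T_3$, I take $w\in S_3\setminus T_3$; since $w$ is a nontrivial $3$-element modulo $T_3$, it has the form $w=\alpha_{a,b,c}\phi_\sigma$, where $\phi_\sigma$ is a $\sigma$-Frobenius map with $\sigma\in\aut(\mathbb{F}_q)$ of order $3^k$, $k\ge 1$. I conjugate $w$ by $\beta_d\in C_{q-1}\le M$ with $d$ a generator of $\mathbb{F}_q^*$. From the diagonal shape of $\beta_d$ one reads off $\phi_\sigma\beta_d=\beta_{d^\sigma}\phi_\sigma$, and since $T_3\trianglelefteq T_3\rtimes C_{q-1}$ one obtains $\beta_d^{-1}w\beta_d=\alpha_{a',b',c'}\,\beta_{d^{\sigma}d^{-1}}\,\phi_\sigma$ for suitable $a',b',c'\in\mathbb{F}_q$. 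In the unique decomposition afforded by $\aut(Ree(q))_{Z_\infty}=(T_3\rtimes C_{q-1})\rtimes\langle\phi\rangle$, this conjugate has $C_{q-1}$-component $\beta_{d^\sigma d^{-1}}$, which is nontrivial precisely when $d^\sigma\ne d$. But a generator $d$ of $\mathbb{F}_q^*$ lies in no proper subfield, whereas $\sigma\ne 1$ fixes only $\mathbb{F}_{3^{h/3^k}}\subsetneq\mathbb{F}_q$; hence $d^\sigma\ne d$. Since every element of $F\supseteq S_3$ has trivial $C_{q-1}$-component, $\beta_d^{-1}w\beta_d\notin S_3$, so $S_3$ is not normal in $M$. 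This is the exact analogue of the computation $(l^{-1}wl)(x)=\lambda^{\sigma-1}x^\sigma+\lambda^{-1}a$ in Lemma \ref{lem11jan}.

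For the final assertion, suppose $S_3=T_3$ and $M/S_3$ is cyclic. Then $T_3$ is the normal Sylow $3$-subgroup of $M$, so by Schur--Zassenhaus $M=T_3\rtimes U$ with $U\cong M/T_3\cong C_{q-1}\rtimes C_e$, where $C_e\le C_h\cong\aut(Ree(q))/Ree(q)$ is the Frobenius image of $M$ and $3\nmid e$ (because $|M/T_3|=(q-1)e$ is prime to $3$). The generator $\phi^{h/e}$ of $C_e$ acts on $C_{q-1}$ by $\beta_d\mapsto\beta_{d^{3^{h/e}}}$; if $e>1$ then $0<3^{h/e}-1<3^h-1=q-1$, so this action is nontrivial and $C_{q-1}\rtimes C_e$ is non-abelian, contradicting that $U$ is cyclic. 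Hence $e=1$, that is $L=Ree(q)$. Equivalently, inside the complement $U$ one checks that $u=\phi^{h/e}$ and $v=\beta_d$, with $d$ a generator of $\mathbb{F}_q^*$, satisfy $uv\ne vu$, exactly as in the closing lines of Lemma \ref{lem11jan}.

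The routine heart of the argument, and the only place needing care, is the commutation identity $\phi_\sigma\beta_d=\beta_{d^\sigma}\phi_\sigma$ together with the uniqueness of the decomposition $(T_3\rtimes C_{q-1})\rtimes\langle\phi\rangle$; once these are in hand, the conclusion is forced by the order estimate for Sylow $3$-subgroups and the subfield argument. The main (still modest) obstacle is the bookkeeping with the $7\times 7$ matrices $\alpha_{a,b,c}$ and $\beta_d$ needed to confirm that conjugation by $\beta_d$ preserves $T_3$ and twists the Frobenius factor by $\beta_{d^\sigma d^{-1}}$; conceptually there is no new difficulty beyond the linear and unitary cases already treated in Lemmas \ref{lem11jan} and \ref{lem11Ajan}.
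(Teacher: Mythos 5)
Your proposal is correct and follows essentially the same route as the paper: the paper also reduces to the decomposition $(T_3\rtimes C_{q-1})\rtimes\langle\phi\rangle$ of the stabilizer of $Z_\infty$, takes $w=\alpha_{a,b,c}\phi_\sigma$ with $\sigma$ of order $3^k$, conjugates by the diagonal element $\beta_\lambda$ with $\lambda$ primitive, and concludes $l^{-1}wl\notin S_3$ from $\lambda^{\sigma}\neq\lambda$, exactly your $C_{q-1}$-component argument; the final clause is likewise handled by the non-commuting pair $u=\phi^{h/e}$, $v=\beta_d$ as at the end of Lemma \ref{lem11jan}. Your write-up is in fact somewhat more explicit than the paper's (which defers most details to the earlier lemmas), but no new idea is involved.
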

\begin{proof} We argue as in the proofs of Lemmas \ref{lem11jan} and \ref{lem11Ajan}. We may assume $h=3^uv$ with $3\nmid v$. Let $H_\infty$ be the hyperplane at infinity of equation $X_0=0$ so that the arising affine space $AG(6,\mathbb{F}_q)$ has coordinates $x_1=X_1/X_0,\ldots, x_6=X_6/X_0$. Look at the $1$-point stabilizer of $Z_\infty$. Up to an isomorphism, $S_3$ consists of products $\alpha\beta$ where $\alpha\in T_3$ and  $\beta$ is a Frobenius map $(x_1,\ldots,x_6)\to (x_1^\sigma,\ldots,x_6^\sigma)$ with $\sigma\in\aut(\mathbb{F}_q)$. In particular, $S_3$ contains a transformation $w$ such that  $w(x)=x^{\sigma}+a$, and $\sigma$  of order $3^k$ with $1\le k \le u$. For a primitive element $\lambda \in \mathbb{F}_q$, let $l$ denote a transformation associated with the diagonal matrix $\diag(1,\lambda,\lambda^{\vf+1},\lambda^{\vf+2},\lambda^{\vf+3},\lambda^{2\vf+3},
\lambda^{2\vf+4})$. Computing $l^{-1}wl(x)$ shows again that $l^{-1}wl\not \in S_3$,  a contradiction as in the proof of Lemma \ref{lem11jan} where $\infty$ is replaced with $Z_\infty$.
\end{proof}
 Essential tools in our work are the classification of finite 2-transitive permutation groups whose $1$-point stabilizer has a solvable normal subgroup due to Holt and O'Nan, and its generalization to group spaces, due to Hering.
 \begin{result}(Holt, \cite[Main Theorem]{holt})
\label{holtre} Let $G$ be a finite $2$-transitive permutation group of even degree, and suppose that the $1$-point stabilizer of $G$ is solvable. Then either $G$ has a regular normal subgroup, or $G$ has a normal $2$-transitive subgroup W isomorphic to $\PSL(2,q)$, $\PSU(3,q)$ (for some odd prime power $q$), or to $Ree(q)$. In the latter case, the action of $W$ is the natural $2$-transitive permutation representation of $\PSL(2,q), \PSU(3,q)$ and $Ree(q)$ respectively, with only one exception: $G\cong \rm{P}\Gamma L(2,8)$ and $W\cong \PSL(3,2)\cong \PSL(2,7)$ with degree $28$.
\end{result}
\begin{result}(O'Nan, \cite[Theorem B]{onan})
\label{onanre} Let $G$ be a finite $2$-transitive group of odd degree, and suppose that the $1$-point stabilizer $G$ has an abelian normal subgroup of order $>1$. Then $G$ has either a regular normal subgroup, or a normal $2$-transitive subgroup W isomorphic to
\begin{itemize}
\item[(i)] $\PSL(r+1,q)$, with  $1 + q +\ldots + q^r$ odd and $r\ge 1$, or
\item[(ii)] $\PSU(3,2^k)$, or
\item[(iii)] $\Sz(2^{2k+1})$,
\end{itemize}
and the action of $W$ is the natural $2$-transitive representation of $\PSL(r+1,q)$, $\PSU(3,2^k)$ and $\Sz(2^{2k+1})$, respectively.
\end{result}
A \emph{group space} consists of a pair $(\Omega,G)$ where $\Omega$ is a set and $G$ is generated, as an abstract group, by a set of permutations on $\Omega$. Clearly, $G$ induces a permutation group $\bar{G}$ on $\Omega$ so that $\bar{G}\cong G/K$ where the subgroup $K$ is the kernel consisting of elements in $G$ which fix $\Omega$ element-wise. A group space is {\emph{transitive}}, if $\bar{G}$ is transitive on $\Omega$. A transitive group space whose $1$-point stabilizer has a subgroup transitive on the remaining points is $2$-transitive.
\begin{result}(Hering, \cite[Theorem.2.4]{her})
\label{HC} Let $(\Omega, G)$ be a finite transitive group space with $|\Omega|>2$. Assume that for some $P\in\Omega$ the stabilizer $G_P$ contains a normal
subgroup $Q$ which is sharply transitive on $\Omega\setminus \{P\}$. If $S$ is the normal
closure of $Q$ in $G$, then one of the following holds:
\begin{itemize}
\item[(i)] $S\cong \PSL(2,q), {\rm{SL}}(2,q), Sz(q), \PSU(3,q), {\rm{SU}}(3,q), Ree(q)$, where $q$ is a prime power, and  $|\Omega|$ is $q+1$ in the linear case, $q^{2}+1$ in the Suzuki case and $q^{3}+1$ in the unitary and Ree case.
\item[(ii)] $S\cong {\rm{P\Gamma L}}(2,8)$ and $|\Omega|=28$.
\item[(iii)] $S$ is a sharply doubly transitive permutation group on $\Omega$.
\item[(iv)] $|\Omega|=d^{2}$ for $d\in\{3,5,7,11,23,29,59\}$, $S=O_{d}(S)\rtimes Q$, $O_{d}(S)$ is extraspecial
of order $d^{3}$ and exponent $d$, $Z(O_{d}(S))=Z(S)$ is the kernel of $(\Omega, S)$ and
$S$ induces a sharply 2-transitive group on $\Omega$.
\end{itemize}
\end{result}

To deal with Case (iii), we need a corollary to Zassenhaus' classification of finite sharply doubly transitive groups.
\begin{result}(Zassenhaus \cite[XII Theorem 9.8]{huppert3})
\label{sharply} Let $G$ be a sharply doubly transitive permutation group on a finite set $\Omega$. Then $|\Omega|$ is a prime power $m$, and the elements in $G$ which have no fixed point in $\Omega$ together with the identity permutation form an elementary abelian group $M$ of order $m$. An example is the group $\AGL(1,m)$ which acts on the points of the affine line over the finite field $\mathbb{F}_m$ as a sharply doubly transitive permutation group. For $m$ prime, there exists no other examples. For $m=r^2$ with $r>2$ prime there exists further examples arising from nearfields of degree $r^2$.
\end{result}
The group ${\rm{A\gamma L}}(2,r^2)$ arises from the regular nearfield of degree $r^2$ and consists of all permutations on the elements of the finite field $\mathbb{F}_{r^2}$ which are of the form $x\mapsto a\circ x+b$ where $a,b\in \mathbb{F}_{r^2}$ and $a\circ x=ax$ for $a$ square in $\mathbb{F}_{r^2}$ while $a\circ x=ax^r$ for non-square $a$ in $\mathbb{F}_{r^2}$. For $r\in \{5,7,11,23,20,59\}$, there exist irregular nearfields each of them gives rise to a sharply doubly transitive group as the regular nearfield does; see \cite[Section 9]{huppert3}.
For smaller values of $m$, the following holds. For $m=9$ there exist exactly two sharply doubly transitive permutation groups, namely $\rm{AGL}(1,9)$ and $\rm{A\gamma L}(1,9)$, whereas for $m=25$ three, namely $\rm{AGL}(1,25)$, $\rm{A\gamma L}(1,25)$, and ${\mathcal{N}}(5)\cong (C_5\times C_5)\rtimes \rm{SL}(2,3)$
arising from the unique irregular nearfield of degree $25$. In particular, ${\rm{A\gamma L}}(1,9)\cong \PSU(3,2)$. Furthermore, the $1$-point stabilizer of $\rm{A\gamma L}(1,25)$ contains a subgroup of order $12$ while that of ${\mathcal{N}}(5)$, isomorphic to $\rm{SL}(2,3)$, does not.

\section{Doubly transitive groups on curves with simple minimal normal subgroup}
\begin{theorem}
\label{doubly} Let $G$ be a group acting on a finite set $\Omega$ with $|\Omega|>2$ such that
\begin{itemize}
\item[\rm(i)] $G$ acts on $\Omega$ as a $2$-transitive permutation group,
\item[\rm(ii)] the action of $G$ on $\Omega$ is faithful,
\item[\rm(iii)] the $1$-point stabilizer has a normal Sylow $p$-subgroup with cyclic complement.  
\end{itemize}
If $G$ has a simple non-abelian normal minimal subgroup $W$ then either $G\cong {\rm{P\Gamma L}}(2,8)$ and
$W\cong \PSL(2,8)$ with $|\Omega|=28$ and $p=3$, 
or one of the following cases occurs: $W\cong \PSL(2,q), Sz(q), \PSU(3,q),$ $ Ree(q)$, where $q$ is a power of $p$, and
$|\Omega|$ is $q+1$ in the linear case, $q^{2}+1$ in the Suzuki case, $q^{3}+1$ in the unitary and Ree case.
\end{theorem}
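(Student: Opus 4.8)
The plan is to combine the solvability of a point stabilizer with the classification theorems of Holt (Result~\ref{holtre}) and O'Nan (Result~\ref{onanre}), splitting the argument according to the parity of $|\Omega|$. Write $M=G_P$ for the stabilizer of a point $P\in\Omega$. Hypothesis (iii) gives $M=S_p\rtimes C$ with $S_p$ a normal Sylow $p$-subgroup and $C$ a cyclic $p'$-group; in particular $M$ is solvable, and $Z(S_p)$ --- or $M$ itself, when $S_p=1$ --- is a nontrivial abelian normal subgroup of $M$. By (ii) the group $G$ is a genuine $2$-transitive permutation group, so both classifications apply. Since $W$ is a simple non-abelian minimal normal subgroup, $G$ is almost simple with $W=\mathrm{soc}(G)$; in particular $G$ has no regular normal subgroup (a regular normal subgroup of a $2$-transitive group is elementary abelian and equals the socle), and $W$ is contained in every nontrivial normal subgroup of $G$.

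If $|\Omega|$ is even I would apply Result~\ref{holtre}: the regular alternative being excluded, $G$ has a normal $2$-transitive subgroup $\tilde W\cong\PSL(2,q),\PSU(3,q)$ (with $q$ odd) or $Ree(q)$ in its natural action, unless $G\cong\rm{P\Gamma L}(2,8)$ with $|\Omega|=28$. If $|\Omega|$ is odd I would use the abelian normal subgroup of $M$ found above to invoke Result~\ref{onanre}, which (again discarding the regular case) yields a normal $2$-transitive $\tilde W\cong\PSL(r+1,q)$ with $1+q+\cdots+q^r$ odd, $\PSU(3,2^k)$, or $Sz(2^{2k+1})$, in its natural action. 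Because $W$ is simple non-abelian, the degenerate small-parameter cases (where the candidate group is solvable, e.g.\ $\PSL(2,2),\PSL(2,3),\PSU(3,2),Sz(2)$) cannot occur, so $\tilde W$ is simple; then $W\le\tilde W$ together with the minimality of $W$ forces $W=\tilde W=\mathrm{soc}(G)$, and the natural actions give $|\Omega|=q+1,q^2+1$ or $q^3+1$. In the exceptional case $\mathrm{soc}(\rm{P\Gamma L}(2,8))=\PSL(2,8)$, and the degree-$28$ point stabilizer has order $54=2\cdot 3^3$, so its normal Sylow subgroup has order $27$ and $p=3$.

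It remains to identify $p$ with the defining characteristic $r_0$ of $W$ in all non-exceptional cases, and to show that the O'Nan linear case has $r=1$. The key step is to transport (iii) to the point stabilizer $W_P=W\cap M$: since $W\trianglelefteq G$ we have $W_P\trianglelefteq M$, and since $S_p\trianglelefteq M$ the intersection $S_p\cap W_P$ is a \emph{normal} Sylow $p$-subgroup of $W_P$ whose complement (a subgroup of $C$, supplied by Schur--Zassenhaus) is cyclic. In each surviving case $W_P$ is a non-abelian Borel or maximal-parabolic subgroup $U\rtimes T$, with $U$ its unipotent part (an $r_0$-group) and with $C_{W_P}(U)$ again an $r_0$-group. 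If $p\neq r_0$, the normal Sylow $p$-subgroup of $W_P$ would centralize $U$, hence be trivial, forcing $W_P$ cyclic --- contradicting that $W_P$ is non-abelian; therefore $p=r_0$ and $q$ is a power of $p$. To eliminate $r\ge 2$ I would use that $W_P\le M$ is solvable: the maximal parabolic of $\PSL(r+1,q)$ is solvable only for $r=1$ or $(r,q)\in\{(2,2),(2,3)\}$, and these last two are excluded because their point stabilizers (of orders $24$ and $432$) have no normal Sylow subgroup --- the normal core of a Sylow $r_0$-subgroup is the \emph{proper} unipotent radical. Hence $r=1$, the O'Nan linear case collapses to $\PSL(2,q)$ with $q$ even, and collecting all outcomes gives exactly $W\cong\PSL(2,q),Sz(q),\PSU(3,q),Ree(q)$ with $q$ a power of $p$, together with the $\rm{P\Gamma L}(2,8)$ exception.

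The main obstacle is the third paragraph: running the Sylow analysis inside the Borel and maximal-parabolic subgroups uniformly across the four Lie-type families so as to extract $p=r_0$ and $r=1$ from nothing more than the condition ``normal Sylow $p$-subgroup with cyclic complement''. The two structural facts this requires --- that the complement of the unipotent part has centralizer an $r_0$-group, and that the full Sylow $p$-subgroup of a rank-$\ge 2$ parabolic fails to be normal --- are precisely the inputs isolated in Lemmas~\ref{lem11jan}, \ref{lem11Ajan}, \ref{lem11Cjan} and~\ref{lem11Bjan}, which I would invoke (or reprove in place) to complete the identification of $W$, and where needed of $G$, in the non-exceptional cases.
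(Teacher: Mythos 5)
Your proposal is correct and follows essentially the same route as the paper: split on the parity of $|\Omega|$, invoke Holt (Result \ref{holtre}) for even degree and O'Nan (Result \ref{onanre}) for odd degree, rule out the regular normal subgroup and the small solvable parameters $(r,q)=(2,2),(2,3)$ via hypothesis (iii), and use (iii) again to force $q$ to be a power of $p$ (and $p=3$ in the $\rm{P\Gamma L}(2,8)$ case). Your third paragraph, transporting (iii) to $W_P=W\cap G_P$ via $S_p\cap W_P$ and the coprime-commutator argument, is a more carefully spelled-out version of the paper's terse statement that the stabilizer has a unique normal subgroup of unipotent order, so no substantive difference in method.
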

\begin{proof}  The $1$-point stabilizer of $G$ is solvable. In particular, since $G$ is not solvable, it does not contain any regular normal subgroup.

First the case where $\Omega$ has odd size is investigated. As a minimal normal subgroup of a solvable group is abelian, Result \ref{onanre} applies. In case (i) of Result \ref{onanre}, since  $W$ acts on $\Omega$ as $\PSL(r+1,q)$ on the points of the projective space $\PG(r,q)$, the $1$-point stabilizer of $\PSL(r+1,q)$ contains the linear group ${\rm{SL}}(r,q)$ which is solvable only when either $r=1$, or $r=2$ and
 $q=2,3$. If $r=1$, (iii) of Result \ref{res74} yields $p=2$ as the unique maximal normal subgroup of the $1$-point stabilizer has order $q$, and $q+1$ is odd.
If $r=q=2$ then $|\Omega|=7$ and hence the $1$-point stabilizer is isomorphic to $\bf{S}_4$, but the Sylow $2$-subgroup of ${\bf{S}}_4$ is not a normal subgroup of ${\bf{S}}_4$, and  Condition (iii)  yields that this case cannot actually occur. If $r=2,q=3$ then $|\Omega|=13$ and the $1$-point stabilizer contains a subgroup isomorphic to ${\rm{SL}}(2,3)$ that contains no normal $3$-subgroup. But,  by Condition (iii), this is impossible.


If the size of $\Omega$ is even, Result \ref{holtre} applies. Apart from the exceptional cases, the $1$-point stabilizer has a unique normal subgroup of order $q$, and hence Condition (iii) yields that $q$ is a power of $p$. If $G\cong {\rm{P}}\Gamma L(2,8)$, $W\cong \PSL(2,8)$ and $|\Omega|=28$ then the $1$-point stabilizer contains a non-cyclic normal subgroup of order $27$, and  Condition (iii) yields $p=3$.
\end{proof}

\begin{proposition}
\label{pro3jan} Let $G$ be a group acting on a finite set $\Omega$ with $|\Omega|>2$ such that Conditions (i), (ii) and (iii)  of Theorem \ref{doubly} are satisfied. Assume that $G$ has a simple non-abelian minimal normal subgroup $W$. If the $1$-point stabilizer $T$ of $G$ has a subgroup $H$ of order $|\Omega|-1$ that acts (sharply) transitively on the remaining $|\Omega|-1$ points then $H$ is a normal subgroup of $T$.
\end{proposition}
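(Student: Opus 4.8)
The plan is to identify $H$ with the normal Sylow $p$-subgroup $S_p$ of $T$ supplied by hypothesis (iii). I would begin from Theorem \ref{doubly}, according to which either $W\cong \PSL(2,q),\,Sz(q),\,\PSU(3,q)$ or $Ree(q)$ with $q$ a power of $p$ and $|\Omega|=q+1,\,q^2+1,\,q^3+1,\,q^3+1$ respectively, or else $G\cong{\rm{P\Gamma L}}(2,8)$ with $|\Omega|=28$ and $p=3$. The key elementary observation is that in every one of these cases $|\Omega|-1$ is a power of $p$, because $q,q^2,q^3$ are powers of $p$ and $28-1=27=3^3$. Consequently $H$, being of order $|\Omega|-1$, is a $p$-subgroup of $T$; and since $S_p$ is normal in $T$ by (iii), it is the unique Sylow $p$-subgroup of $T$, so $H\le S_p$. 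Everything then reduces to showing $|S_p|=|\Omega|-1$, which at once forces $H=S_p$ and hence $H\trianglelefteq T$.

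For the four infinite families I would proceed as follows. Writing $T=G_P$, I would first record that $G$ is almost simple with socle $W$: the proof of Theorem \ref{doubly} already shows that $G$ has no regular normal subgroup, while $C_G(W)$ centralizes the transitive group $W$ and so is semiregular (a point-fixing element of $C_G(W)$ would fix $\Omega$ pointwise); hence $C_G(W)$ would be regular were it non-trivial, so $C_G(W)=1$ and $W\le G\le\aut(W)$, the action on $\Omega$ being the natural one. Since $|\Omega|-1$ is a power of $p$ we have $p\nmid|\Omega|=[W:W\cap T]$, so a Sylow $p$-subgroup $T_p$ of $W\cap T$ is a full Sylow $p$-subgroup of $W$, and $T_p\le S_p$ by the uniqueness of $S_p$. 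A short count gives $|T_p|=|W|_p=|\Omega|-1$ in each family. I would then invoke the appropriate one of Lemmas \ref{lem11jan}, \ref{lem11Ajan}, \ref{lem11Cjan}, \ref{lem11Bjan} with $M=T$: it asserts that either $S_p=T_p$, or $p\mid h$ and $S_p$ is not normal in $T$ (and for $W\cong Sz(q)$ the equality $S_p=T_p$ holds unconditionally). Hypothesis (iii) rules out the non-normal alternative, leaving $S_p=T_p$, whence $|S_p|=|\Omega|-1$ and $H=S_p\trianglelefteq T$.

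The exceptional case $G\cong{\rm{P\Gamma L}}(2,8)$, $|\Omega|=28$, $p=3$ must be handled separately, because its degree-$28$ action is not the natural action covered by Lemmas \ref{lem11jan}--\ref{lem11Bjan}. Here I would argue directly: as noted in the proof of Theorem \ref{doubly}, $T$ has a normal Sylow $3$-subgroup $S_3$ of order $27$, so from $|H|=27=|S_3|$ and $H\le S_3$ one again obtains $H=S_3\trianglelefteq T$.

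I expect the main obstacle to be the middle step, namely passing from the Sylow $p$-subgroup $T_p$ of the simple group $W$ to the a priori larger Sylow $p$-subgroup $S_p$ of the full stabilizer $T$. A proper enlargement $T_p\subsetneq S_p$ can occur only through field automorphisms of $p$-power order fixing a point, that is, exactly when $p\mid h$; it is precisely here that hypothesis (iii) becomes indispensable, since Lemmas \ref{lem11jan}--\ref{lem11Bjan} guarantee that any such enlargement would destroy the normality of $S_p$. Thus (iii) collapses $S_p$ onto $T_p$ and pins down $|S_p|=|\Omega|-1$.
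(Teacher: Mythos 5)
Your proposal is correct and follows essentially the same route as the paper: both arguments rest on the observation that $|\Omega|-1$ is a power of $p$ (so $H$ lies in the normal Sylow $p$-subgroup of $T$ supplied by hypothesis (iii)), on Lemmas \ref{lem11jan}, \ref{lem11Ajan}, \ref{lem11Cjan} and \ref{lem11Bjan} to force that Sylow $p$-subgroup to coincide with a Sylow $p$-subgroup of $W$ of order $|\Omega|-1$, and on a separate order count for ${\rm{P\Gamma L}}(2,8)$. The only cosmetic difference is that the paper packages the comparison through the auxiliary subgroup $L=\langle W,H\rangle$ and the product $RH$, whereas you work directly inside $T$ with $S_p$ and $T_p$ (and you make explicit the almost-simplicity step $C_G(W)=1$ that the paper leaves implicit).
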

\begin{proof} 
Theorem \ref{doubly} applies.

If $G\cong {\rm{P\Gamma L}}(2,8)$ and $|\Omega|=28$ then the $1$-point stabilizer $T$ has order $54$ and contains only one subgroup of order $27$.  Hence, the latter one is $H$, and it is normal in $T$.



If $W\cong\PSL(2,q)$ with $q\geq 4$ (and $|\Omega|=q+1$ with $q=p^h$) then $\PSL(2,q)\leq G \leq {\rm{P\Gamma L}}(2,q)$.
Assume that $H$ is not contained in $\PSL(2,q)$, and look at the subgroup $L$ generated by $\PSL(2,q)$ and $H$. The subgroup $\PSL(2,q)\cap H$ is a $p$-subgroup of $\PSL(2,q)$ which fixes $P$. The stabilizer $W_P$ of  $P$ in $W$ has a Sylow $p$-subgroup $R$ of $\PSL(2,q)$, and $\PSL(2,q)\cap H$ is contained in $R$. Since $W_P$ is a normal subgroup of $G_P$, $RH$ is a $p$-subgroup of $L$ whose order equals $|R||H|/|R\cap H|$. Thus, $RH$ is a Sylow $p$-subgroup of $L$. From Condition (iii) of Theorem \ref{doubly} applied to $L_P$, $RH$ is a normal subgroup of $L_P$. From Lemma \ref{lem11jan}, $R=H$.

If $W\cong \PSU(3,q)$ with $q\geq 3$ (and $|\Omega|=q^3+1$ with $q=p^h$) then $\PSU(3,q)\leq G \leq {\rm{P\Gamma U}}(3,q)$. The above argument used for $\PSL(2,q)$ still works with $|H|=q^3$ and Lemma \ref{lem11Ajan}.

If $W\cong Sz(q)$ (and $|\Omega|=q^2+1$ with $q=2^h, h\ge 3$ odd) then $|H|=2^{2h}$ but $[\aut(Sz(q)):Sz(q)]=h$ is odd. Therefore, up to conjugacy, $H= Sz(q)$. The $1$-point stabilizer of $Sz(q)$ has a unique (Sylow) $2$-subgroup of order $q^2$ which acts transitively on the set of the remaining $|\Omega|-1$ points. In particular, that Sylow $2$-subgroup is normal and coincides with $H$.

If $W\cong Ree(q)$ (and $|\Omega|=q^3+1$ with $q=3^h, h\ge 1$ odd) then $|H|=3^{3h}$ and $[\aut(Ree(q)):Ree(q)]=h$. The above argument used for $\PSL(2,q)$ still works with $|H|=q^3$ and Lemma \ref{lem11Bjan}.
\end{proof}
\begin{rem}
\label{remdoubly}{\em{ By (iii) of Result \ref{res74},
 both Theorem \ref{doubly} and Proposition \ref{pro3jan} are valid for  $\aut(\cX)$ provided that  Conditions (i) and (ii) in Theorem  \ref{doubly} are satisfied.}}
 \end{rem}
\section{Doubly transitive groups on curves with solvable minimal normal subgroup}
\begin{theorem}
\label{doublysolv} Let $G$ be a subgroup of $\aut(\cX)$ which has an orbit $\Omega$ with $|\Omega|>2$ such that both (i) and (ii) in Theorem \ref{doubly} hold. If, in addition,
\begin{itemize}
\item[\rm(iii)] $G$ has a solvable minimal normal subgroup $N$,
\item[\rm(iv)] the $1$-point stabilizer of $G$ has a subgroup $T$ that is sharply transitive on the remaining points of $\Omega,$
\item[\rm(v)] the quotient curve $\cX/T$ is rational,
\end{itemize}
then $\cX$ is either rational, or elliptic.
\end{theorem}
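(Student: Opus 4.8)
The plan is to build a sharply $2$-transitive group out of the given data, read off its canonical partition into the regular normal subgroup and the point stabilizers, and then let the partition genus formula (Result~\ref{th11.114}) together with the Hurwitz formula do the work; no analysis of $p$ or of the ramification filtration should be needed.

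First I would fix the structure of $N$. A solvable minimal normal subgroup is elementary abelian, and since $G$ is $2$-transitive (hence primitive) the nontrivial normal subgroup $N$ is transitive on $\Omega$; being abelian and transitive it is regular. Thus $|N|=|\Omega|=:m$, $N\cap G_P=1$, and $G=N\rtimes G_P$. The subgroup $T\le G_P$ from (iv) is sharply transitive on $\Omega\setminus\{P\}$, so $S:=N\rtimes T$ has order $m(m-1)$ and acts $2$-transitively on $\Omega$ with $S_P=T$; comparing orders shows $S$ is \emph{sharply} $2$-transitive. By Result~\ref{sharply} the fixed-point-free elements of $S$ together with the identity are exactly the elements of $N$. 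Since in a sharply $2$-transitive action a non-identity element fixes at most one point, every non-identity element of $S$ lies either in $N$ (when fixed-point-free) or in a unique stabilizer $S_Q$ (when it fixes $Q\in\Omega$); as $N\cap S_Q=1$ and $S_Q\cap S_{Q'}=1$ for $Q\ne Q'$, the subgroups $N$ and $\{S_Q\}_{Q\in\Omega}$ form a partition of $S$ with $k=m+1$ components.

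Next I would evaluate the quotient genera entering Result~\ref{th11.114}. By hypothesis $\cX/T$ is rational, and since all stabilizers $S_Q$ are $S$-conjugate to $S_P=T$, the curves $\cX/S_Q$ are isomorphic to $\cX/T$ and hence rational. Moreover $\mathbb{K}(\cX)^S\subseteq\mathbb{K}(\cX)^T$ is a subfield of the rational field $\mathbb{K}(\cX/T)$, so by L\"uroth's theorem $\cX/S$ is rational too. Feeding the partition into Result~\ref{th11.114} with $n=|S|=m(m-1)$, $\gg(\cX/S)=0$, the single $N$-component of order $m$ contributing $\gg(\cX/N)$, and the $m$ stabilizer components of order $m-1$ each contributing $0$, the identity $(k-1)\gg(\cX)+n\,\gg(\cX/S)=\sum_i n_i\,\gg(\cX/S_i)$ collapses to $m\,\gg(\cX)=m\,\gg(\cX/N)$, i.e. $\gg(\cX)=\gg(\cX/N)$.

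Finally I would apply the Hurwitz genus formula (\ref{eq1}) to the cover $\cX\to\cX/N$ of degree $m$. Writing $\gg:=\gg(\cX)=\gg(\cX/N)$ and using that the different $D(\cX|\cX/N)$ is effective, it reads $2\gg-2=m(2\gg-2)+D$ with $D\ge0$, whence $(2\gg-2)(1-m)=D\ge0$. As $|\Omega|=m>2$ gives $1-m<0$, this forces $2\gg-2\le0$, so $\gg(\cX)\le1$ and $\cX$ is rational or elliptic. The step I expect to be the main obstacle to make fully rigorous is the middle one: verifying that $S$ is sharply (not merely doubly) transitive and that $N$ is precisely its fixed-point-free part, so that $N$ and the point stabilizers genuinely partition $S$ and every stabilizer quotient is rational by conjugacy. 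Once this partition is secured, the argument is uniform, avoiding any case split on whether $N$ is a $p$-group, on the isomorphism type of $S$ in Zassenhaus' list, or on tame versus wild ramification, since the single inequality $(2\gg-2)(1-m)\ge0$ already yields the bound.
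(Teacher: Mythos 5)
Your proposal is correct and follows essentially the same route as the paper: form the sharply doubly transitive subgroup $S=NT$, partition it into $N$ and the point stabilizers, apply Result~\ref{th11.114} to get $\gg(\cX)=\gg(\cX/N)$, and conclude via the Hurwitz formula. You actually supply a few details the paper leaves implicit (that $N$ is elementary abelian and regular via primitivity, and the final Hurwitz inequality forcing $\gg\le 1$), so the step you flagged as a potential obstacle is in fact fully secured by your order count.
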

\begin{proof} Let $d=|\Omega|$. Since $N$ is faithful and sharply transitive on $\Omega$, $T\cap N$ is trivial, the subgroup $S=TN$ has order $d(d-1)$ and hence it is a sharply doubly transitive group on $\Omega$. Therefore, $S$ has a partition whose components are the subgroup $N$ of order $d$ together with the stabilizers $S_U$ in $S$ with $U$ ranging over $\Omega$. Result \ref{th11.114} applies to $S$ with $k=1+d$, where $S_1=N$, and, for $i=2,\ldots k$, $S_i$ are the conjugates of $T$ in $S$. In particular, the quotient curves $\cX/S_i$ for $i\geq 2$ are isomorphic. Since one of them, namely $\cX/T$ is rational, we have $\gg(\cX/S_i)=0$ for $i=2,\ldots k$. Also $\gg(\cX/S)=0$, as $T$ is a subgroup of $S$. Now, (\ref{part}) reads $m\gg(\cX)=m\gg(\cX/N)$ whence $\gg(\cX)=\gg(\cX/N)$. This is only possible when either $\gg(\cX)=0$ or $\gg(\cX)=1$. \end{proof}
\begin{rem} {\em{Theorem \ref{doublysolv} is special case of a more general result of Guralnick; see \cite[Corollary 3.2]{gural}.}}
\end{rem}
\begin{proposition}
\label{pro23jan} Let $\cX$ be a rational curve. If $G$ is a subgroup of $\aut(\cX)$ such that both (i) and (ii) in Theorem \ref{doubly} hold, and, in addition, $G$ has a solvable minimal normal subgroup then one of the following cases occurs.
\begin{itemize}
\item[(i)] $G$ is sharply doubly transitive on $\Omega$, $G\cong \AGL(1,m)$ with $|\Omega|=m$ where either $m$ is a power of $p$, or $m=3$ and $p\neq 3$, or $m=4$ and  $p\neq 2$.
\item[(ii)] $|\Omega|=4$, $G\cong \bS_4$, $p\neq 2$, and  $\AGL(1,4)\cong\bA_4$ is the unique subgroup of $G$ which is sharply doubly transitive on $\Omega$.
\end{itemize}
\end{proposition}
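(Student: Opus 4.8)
The plan is to use that a rational $\cX$ has $\aut(\cX)\cong\PGL(2,\mathbb{K})$, so that $G$ is a \emph{finite} subgroup of $\PGL(2,\mathbb{K})$ (finite because it acts faithfully on the finite set $\Omega$), and to combine Dickson's classification of such subgroups, Result~\ref{lem30oct2016}, with the affine structure forced by the presence of a solvable minimal normal subgroup. First I would carry out the group-theoretic reduction. Let $N$ be the solvable minimal normal subgroup; being characteristically simple and solvable it is an elementary abelian $r$-group for some prime $r$. By Condition (i) the group $G$ is primitive, so the nontrivial normal subgroup $N$ is transitive; as $N$ is abelian, its point stabiliser is normal in $N$, hence fixes every point, hence is trivial by the faithfulness in Condition (ii). Thus $N$ is regular, $|\Omega|=|N|=r^{a}$, the action is of affine type with $G=N\rtimes G_{P}$, and $G_{P}$ acts faithfully on $N$ and transitively on the $r^{a}-1$ nonzero vectors.

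Next I would split according to whether $r=p$. By Result~\ref{lem30oct2016}, the elementary abelian subgroups of $\PGL(2,\mathbb{K})$ are the unipotent ones of arbitrary rank when $r=p$ (type (ii)), and, when $r\neq p$, the prime-to-$p$ abelian subgroups, which by types (i),(iii)--(vi) are cyclic with the single non-cyclic exception of a Klein four group $V_{4}$ inside $\bA_4,\bS_4$ or $\bA_5$, possible only for $p\neq2$. Hence for $r\neq p$ either $N\cong C_{r}$ with $a=1$, and then $r\ge3$ because $r^{a}=|\Omega|>2$, or $N\cong V_{4}$ with $r=2$, $a=2$, $p\neq2$.

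In the case $r=p$, the group $N$ consists of translations and has a single fixed point on $\PG(1,\mathbb{K})$, say $\infty$; since $N\trianglelefteq G$, also $G$ fixes $\infty$, so $G$ lies in the Borel subgroup $\{x\mapsto ax+b\}$. Recentring so that the stabilised point is $P=0$, the stabiliser $G_{P}=\{x\mapsto ax\}$ is a finite cyclic subgroup of $\mathbb{K}^{*}$ acting by multiplication on $\Omega=V$, where $V$ is the $\mathbb{F}_p$-subspace with $N=\{x\mapsto x+b:b\in V\}$. Transitivity on $V\setminus\{0\}$ forces this action to be regular, so $G_{P}$ is the group of $(p^{a}-1)$-th roots of unity and $V=G_{P}\cup\{0\}=\mathbb{F}_{p^{a}}$. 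Thus $G\cong\AGL(1,p^{a})$ is sharply $2$-transitive, giving case (i) with $m=p^{a}$ a power of $p$.

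For $r\neq p$ with $N\cong C_{r}$ ($r\ge3$), the stabiliser $G_{P}$ normalises $C_{r}$ and hence lies in the normaliser of $C_{r}$, which equals $T\rtimes C_{2}$ where $T$ is the maximal torus centralising $C_{r}$; since $T$ acts freely off its two fixed points and the point $P$ is not such a fixed point, $G_{P}\cap T=1$ and so $|G_{P}|\le2$. Transitivity on the remaining $r-1$ points gives $r-1\le2$, so $r=3$, $G_{P}\cong C_{2}$ and $G\cong\bS_3\cong\AGL(1,3)$ with $p\neq3$, again case (i). Finally, for $N\cong V_{4}$ the stabiliser $G_{P}$ is a transitive subgroup of $\Aut(V_{4})\cong\bS_3$ on the three involutions, so $G_{P}\cong C_{3}$ or $G_{P}\cong\bS_3$; the former gives the sharply $2$-transitive $G\cong\bA_4\cong\AGL(1,4)$ (case (i)), while the latter gives $G\cong\bS_4$, where $V_{4}$ is the unique minimal normal subgroup and $\bA_4$, the only subgroup of order $12$, is the unique sharply $2$-transitive subgroup, yielding case (ii). I expect the main obstacle to be precisely the case $r\neq p$: one must read off from Dickson's list that $V_{4}$ is the only non-cyclic elementary abelian $p'$-subgroup, and then exploit that the Weyl group of a torus in $\PGL(2,\mathbb{K})$ has order $2$ in order to cap $r$ at $3$ in the cyclic case and to separate the $\bA_4$ and $\bS_4$ possibilities in the Klein-four case.
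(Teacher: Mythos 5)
Your proof is correct, but it takes a genuinely different route from the paper's. The paper's proof is very short: it imports from the proof of Theorem \ref{doublysolv} the sharply doubly transitive subgroup $S=TN$ (where $T$ is a subgroup of the point stabilizer acting sharply transitively on the remaining points), identifies $S\cong\AGL(1,m)$ with the stated constraints on $(m,p)$ by feeding the order $m(m-1)$ into Dickson's list (Result \ref{lem30oct2016}), and then argues that $G$ cannot properly contain $S$ except in the degree-$4$ case where $[G:S]=2$ and $G\cong\bS_4$. You instead work directly with the minimal normal subgroup $N$: you show $N$ is elementary abelian and regular (primitivity plus faithfulness), classify the possible $N\le\PGL(2,\mathbb{K})$ via Dickson's list into unipotent ($r=p$), $C_r$ with $r\ne p$, and $V_4$ with $p\ne 2$, and then pin down $G_P$ in each case using the Borel/normalizer-of-a-torus structure of $\PGL(2,\mathbb{K})$. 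Your route is somewhat longer but buys something concrete: it does not presuppose the existence of the sharply transitive complement $T$ in the point stabilizer, which the paper's proof tacitly carries over from the hypotheses of Theorem \ref{doublysolv} even though Proposition \ref{pro23jan} as stated only assumes conditions (i) and (ii) of Theorem \ref{doubly}; so your argument proves the proposition under exactly its stated hypotheses. The one place where care is needed in your version, and where you correctly flag the issue, is reading off from Dickson's list that the only non-cyclic elementary abelian $p'$-subgroup is $V_4$ and that the normalizer of $C_r$ ($r\ge 3$, $r\ne p$) meets the point stabilizer in a group of order at most $2$; both of these are sound. Both proofs ultimately rest on the same key input, Result \ref{lem30oct2016}.
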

\begin{proof} From the proof of Theorem \ref{doublysolv}, $S=TN$ is a sharply doubly transitive group on $\Omega$.
In particular, the order of $S$ is the product of two consecutive integers. From Result \ref{lem30oct2016} applied to $S$, we have $S\cong \AGL(1,m)$ where either $m$ is a power of $p$, or
$m=3$ and $p\neq 3$, or $m=4$ and  $p\neq 2$. Moreover, if $m$ is a power of $p$ then any solvable subgroup of $\PGL(2,\mathbb{K})$ containing $\rm{AGL}(1,m)$ has an abelian subgroup of order $m'=mp^r$ with $r>1$. Therefore, $G$ cannot contain $S$ properly. Also, $\rm{AGL}(1,3)$ is the only doubly transitive permutation group of degree $3$, and hence $G=S$ for $m=3$ and $p\neq 3$. Finally, there are two doubly transitive permutation groups of degree $4$, one is $\rm{AGL}(1,4)\cong \bA_4$ the other $\bS_4$, and in the former case $G=S$ but $[G:S]=2$ in the latter.
\end{proof}
\begin{proposition}
\label{pro23janA}
Let $\cE$ be an elliptic curve. If $G$ is a subgroup of $\aut(\cE)$ such that both (i) and (ii) in Theorem \ref{doubly} hold
 then one of the following occurs.
\begin{itemize}
\item[(i)] $G$ is sharply doubly transitive on $\Omega$, $G\cong \AGL(1,m)$ with $m=|\Omega|$ where $m=3,4,5,7$ for $p\neq 2,3$, and $m=3,4,5,7$ for $p=3$, and $m=3,5,7$ for $p=2$,
\item[(ii)] $G$ is sharply doubly transitive on $\Omega$, $G\cong \PSU(3,2)$ where $|\Omega|=9$ and $p=2$.
\item[(iii)] $G$ is sharply doubly transitive on $\Omega$, $G\cong (C_5\times C_5)\rtimes\rm{SL}(2,3)$ where $|\Omega|=25$ and $p=2$.
\item[(iv)] $G$ is not sharply doubly transitive on $\Omega$, $G\cong \bS_4$ where $|\Omega|=4$, $p\neq 2$,.
\item[(v)] $G$ is not sharply doubly transitive on $\Omega$, $G\cong {\rm{A}}\Gamma L(1,9)$ where $|\Omega|=9$ and $p=2$.
\end{itemize}
\begin{proof} Since a $1$-point stabilizer $G_P$ of $G$ has order at least $|\Omega|-1$, Result \ref{autelliptic} gives the possibilities for $|\Omega|$, namely  $|\Omega|=3,5,7$ for $p\neq 2,3$,  and
$|\Omega|=3,5,7,13$ for $p=3$, and $3,4,5,9,25$ for $p=2$. Comparison of the cases listed in (i),\ldots,(v) with Result \ref{sharply} (and the subsequent remark) shows that only two cases have to be ruled out, namely $|\Omega|=13$ for $p=3$, and $|\Omega|=4$ for $p=2$.
In the former case, $G$ is sharply doubly transitive, and since $13$ is a prime $G\cong \AGL(1,13)$ and its $1$-point stabilizer $G_P$ is cyclic; see Result \ref{sharply}. On the other hand,
$G_P$ is not abelian in this case by Result \ref{silv}, a contradiction. In the latter case, $G\cong \AGL(1,4)$, and  $p=2$.
Since $j(\cE)=0$, $\cE$ has zero $2$-rank and hence it has no translation of order $2$. On the other hand the only non-trivial normal subgroup of $\AGL(1,4)$ has order $4$. But this contradicts
Result \ref{res12feb2019}. This contradiction ends the proof.
\end{proof}
\begin{proposition}
\label{pro24jan} Let $G$ be a subgroup of $\aut(\cX)$ which has an orbit $\Omega$ such that both (i) and (ii) in Theorem \ref{doubly} hold. If, in addition,
\begin{itemize}
\item[\rm(iii)] $G$ has a solvable minimal normal subgroup $N$,
\item[\rm(iv)] the $1$-point stabilizer of $G$ has a subgroup $T$ that is sharply transitive on the remaining points of $\Omega,$
\item[\rm(v)] the quotient curve $\cX/T$ is rational,
\end{itemize}
then $T$ is a normal subgroup of the $1$-point stabilizer of $G$.
\end{proposition}
\begin{proof} In Propositions \ref{pro23jan} and \ref{pro23janA}, either $T$ coincides with the $1$-point stabilizer of $G$, or $T$ is an index $2$ subgroup of it.
\end{proof}
\end{proposition}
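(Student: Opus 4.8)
The plan is to observe that the hypotheses (i)--(v) imposed on $G$ here are \emph{exactly} those of Theorem \ref{doublysolv}. Hence that theorem applies verbatim and forces $\cX$ to be either rational or elliptic. This reduces the statement to two finite lists of possibilities for the pair $(G,\Omega)$, namely the ones produced in Proposition \ref{pro23jan} (rational case) and Proposition \ref{pro23janA} (elliptic case), so the remaining argument is a pure case check against these two classifications and needs no further function-field input.

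First I would record the two numerical quantities that control everything. Writing $G_P$ for the $1$-point stabilizer, $2$-transitivity gives that $G_P$ is transitive on the remaining $|\Omega|-1$ points, so $(|\Omega|-1)\mid |G_P|$; on the other hand (iv) supplies a subgroup $T\le G_P$ of order exactly $|\Omega|-1$ acting sharply transitively there. Consequently the index $[G_P:T]=|G_P|/(|\Omega|-1)$ is a well-defined positive integer, and it suffices to prove $[G_P:T]\in\{1,2\}$: if the index is $1$ then $T=G_P$ is normal trivially, and if it is $2$ then $T$ is normal because every index-$2$ subgroup is normal.

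Next I would run through Proposition \ref{pro23jan} in the rational case. In its case (i), $G\cong\AGL(1,m)$ is \emph{sharply} doubly transitive, so $|G_P|=m-1=|\Omega|-1=|T|$ and $T=G_P$. In its case (ii), $G\cong\bS_4$ with $|\Omega|=4$, so $G_P\cong\bS_3$ has order $6$ while $|T|=3$, whence $[G_P:T]=2$. Then I would do the same with Proposition \ref{pro23janA} in the elliptic case: in the sharply doubly transitive cases (i), (ii), (iii) one again has $|G_P|=|\Omega|-1=|T|$ and $T=G_P$; in the two remaining cases $G\cong\bS_4$ (with $|\Omega|=4$, $|G_P|=6$) and $G\cong{\rm{A}}\Gamma L(1,9)$ (with $|\Omega|=9$, $|G_P|=16$), so $|G_P|=2(|\Omega|-1)=2|T|$ and $[G_P:T]=2$ in both. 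In every case of both lists the index lies in $\{1,2\}$, so $T\trianglelefteq G_P$.

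The content of the result therefore lies entirely upstream, in Theorem \ref{doublysolv} and the two classification propositions; the step carried out here is only order bookkeeping. The one place that is not completely automatic, and hence the main thing to get right, is verifying the stabilizer orders in the non-sharply-transitive exceptional cases, in particular that the $1$-point stabilizer of ${\rm{A}}\Gamma L(1,9)$ on $9$ points has order $16$ (so that $T$, of order $8$, has index $2$) rather than an order that would leave $T$ a proper non-normal subgroup. Once those two finite checks are confirmed, the normality of $T$ in $G_P$ follows uniformly from the index-$2$ principle.
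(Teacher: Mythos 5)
Your proposal is correct and follows essentially the same route as the paper, whose entire proof of this proposition is the one-line observation that in each case of Propositions \ref{pro23jan} and \ref{pro23janA} the subgroup $T$ is either the whole $1$-point stabilizer or an index-$2$ subgroup of it. Your write-up merely makes explicit the reduction via Theorem \ref{doublysolv} to the rational/elliptic dichotomy and verifies the stabilizer orders (including $|G_P|=16$ for ${\rm{A}}\Gamma L(1,9)$), which is exactly the bookkeeping the paper leaves implicit.
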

\section{Auxiliary results for the proof of Theorem \ref{th1}}
In this section, $P_1,P_2$ are distinct points of $\cX$, and $G_1,G_2$ are distinct subgroups of $\aut(\cX)$ where $G_1$ fixes $P_1$ and $G_2$ fixes $P_2$. Moreover, $|\supp(D)|>2$, and $G_1,G_2$ have properties (I),(II),(III). By Lemmas \ref{profuka} and \ref{profuka1}, properties (i) ,(ii) and (vi) of Lemma \ref{profuka1} also hold.

As before, let $\Omega$ denote $\supp(D)$ of the divisor $D$ of $\cX$ defined in (III). Then (ii) of Lemma \ref{profuka1} states that $G$ acts on $\Omega$ as a doubly transitive permutation group.
Actually, the normal closure $S$ of $G_1$ in $G$ still acts doubly transitively on $\Omega$. In fact, there exists $g\in G$ which takes $P_1$ to $P_2$ and the subgroup $H_2=g^{-1}G_1g$ of $G$ fixes $P_2$ and acts (sharply) transitively on $\Omega\setminus \{P_2\}$. Hence $G_1,H_2$ also have properties (I),(II),(III).

 Our aim is to determine all possibilities for $S$.  Since $S$ may happen to be not faithful on $\Omega$, we begin by investigating the subgroup $K$ of  $G$ consisting of all elements which fix $\Omega$ pointwise.

\begin{lemma}
\label{prop1jan}  $K$ is a cyclic group whose order is prime to $p$ and divides $\deg(\cC)$. Furthermore,  $K=Z(G)=Z(S)$. 
\end{lemma}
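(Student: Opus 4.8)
The plan is to deduce all four assertions from the single structural fact that $\bar{G}=G/K$ is a \emph{faithful} doubly transitive permutation group on $\Omega$ with $|\Omega|>2$, proving them in the order: $K=Z(G)$, then cyclic, then prime to $p$, then $|K|\mid\deg(\cC)$, and finally $K=Z(S)$. For $K=Z(G)$ I would argue both inclusions separately. The inclusion $Z(G)\subseteq K$ follows because the image of $Z(G)$ is central in $\bar{G}$, and a faithful $2$-transitive group on more than two points has trivial centre (its centre lies in the centralizer of a $2$-transitive subgroup of $\Sym(\Omega)$, which is trivial since such a group is primitive and non-regular). For $K\subseteq Z(G)$, note that $K$ fixes every point of $\Omega$, so $K\le G_{P_1}\cap G_{P_2}$; since $G_i\trianglelefteq G_{P_i}$ (Lemma~\ref{profuka1}) while $K\trianglelefteq G$, the commutator satisfies $[K,G_i]\subseteq K\cap G_i$, and $K\cap G_i=\{1\}$ because $G_i$ acts freely on $\Omega\setminus\{P_i\}$ by (i) of Lemma~\ref{profuka1} whereas $K$ fixes $P_{3-i}$. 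Thus $K$ centralizes $G_1$ and $G_2$, hence all of $G=\langle G_1,G_2\rangle$.

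For cyclicity and the order being prime to $p$, I would use that $K\le G_{P_1}$ normalizes $G_1$ and meets it trivially, so $K$ acts faithfully on $\cX/G_1\cong\PG(1,\mathbb{K})$, i.e. as a subgroup of $\PGL(2,\mathbb{K})$ on $\mathbb{K}(x)$ with $\cX^{G_1}=\mathbb{K}(x)$. By (vi) of Lemma~\ref{profuka1} the points of $\Omega\setminus\{P_1\}$ are exactly the poles of $x$ (a single totally split fibre over $x=\infty$), while $P_1$ lies over a finite value $x=c$; as $K$ fixes $P_1$ and every pole, its image fixes the two distinct points $x=\infty$ and $x=c$ and therefore acts by homotheties $x-c\mapsto\lambda(x-c)$. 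Hence $K$ embeds into $\mathbb{K}^{*}$ (no non-trivial translation can fix the finite point $c$), so $K$ is cyclic of order prime to $p$.

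To obtain $|K|\mid\deg(\cC)$ I would pass to the plane model of Lemma~\ref{profuka1}. Since $K$ normalizes $G_1$ and $G_2$ and fixes both $x=\infty$ and $y=\infty$, it acts on $\PG(2,\mathbb{K})$ by affinities fixing $\ell_\infty$; fixing the $\geq 3$ collinear points of $\varphi(\Omega)$ on $\ell_\infty$ forces $K$ to fix $\ell_\infty$ pointwise, so $K$ is a cyclic group of homologies with axis $\ell_\infty$ and a common affine centre $C_0=(c,c')$. A point of $\cC$ over $C_0$ would satisfy $x=c$, hence would be $P_1$ by total ramification of the fibre $x=c$; but $y(P_1)=\infty\neq c'$, so $C_0\notin\cC$. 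A generic line through $C_0$ then meets $\cC$ in $\deg(\cC)$ points lying off $\ell_\infty$ and off $C_0$, which the order-$|K|$ homology group permutes in free orbits, giving $|K|\mid\deg(\cC)$.

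Finally, for $K=Z(S)$: as the image $\bar S$ of $S$ in $\Sym(\Omega)$ is again faithful and $2$-transitive, its centralizer in $\Sym(\Omega)$ is trivial, so $C_G(S)\subseteq K$; combined with $K=Z(G)\subseteq C_G(S)$ this yields $C_G(S)=K$ and hence $Z(S)=C_S(S)=S\cap C_G(S)=S\cap K$. Everything therefore reduces to the inclusion $K\subseteq S$, which I expect to be \emph{the main obstacle}. My plan is to use that $S$ contains a conjugate $H_2$ of $G_1$ which, like $G_2$, is a regular normal subgroup of $G_{P_2}$; writing $R=G_2K=H_2K$, which is the central direct product $K\times G_2$ since $G_2\cap K=\{1\}$, the subgroup $H_2\le S$ appears as the graph $\{\chi(g)g:g\in G_2\}$ of a $G_{P_2}$-invariant homomorphism $\chi\colon G_2\to K$, and $G_2\subseteq S$ is equivalent to $\chi(G_2)\subseteq Z(S)$. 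Because $K$ has order prime to $p$, this twisting vanishes whenever $G_1$ (hence $G_2$) is a $p$-group, which is precisely the situation in every case with $K\neq\{1\}$ furnished by Hering's theorem (Result~\ref{HC}), the sharply $2$-transitive cases having $K=\{1\}$; equivalently one checks $K\subseteq[G,G]\subseteq S$. Making this dichotomy clean, rather than the two routine embeddings above, is where the real work lies.
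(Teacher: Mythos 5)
Your overall architecture differs from the paper's and, as written, it is circular at the decisive point. Both your proof of $K\subseteq Z(G)$ (via $[K,G_i]\subseteq K\cap G_i$) and your proof that $K$ is cyclic of order prime to $p$ (via a faithful action of $K$ on $\cX/G_1\cong\PG(1,\mathbb{K})$) rest on the assertion that $K$ normalizes $G_i$, which you attribute to Lemma \ref{profuka1}. That lemma contains no such statement: the normality of $G_1$ in the stabilizer $G_{P_1}$ is Theorem \ref{cond*}, whose proof in the paper explicitly invokes Lemma \ref{prop1jan} (both $K=Z(G)$ and $|K|\mid\deg(\cC)$ are used there to get $G_1K=G_1\times K$ with coprime factors, whence $G_1$ is characteristic in $G_1K$ and hence normal in $G_{P_1}$). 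So you may not use it here, and there is no cheap substitute: $K\trianglelefteq G$ only gives that $G_1$ normalizes $K$, and conjugating $G_1$ by $\alpha\in K$ a priori lands in $G_1K$, not in $G_1$. The paper's proof is built precisely to avoid this. It uses the pole data of (vi) of Lemma \ref{profuka1} together with Lemma \ref{lemA1jan} to show directly that every non-trivial $\alpha\in K$ acts on the plane model by $\alpha(x)=ux+b$, $\alpha(y)=uy+c$; the translation case $u=1$ is excluded by looking at the tangent lines of the branches at points of $\supp(D)$, so $K$ is a group of homologies with a common centre, hence cyclic of order prime to $p$, and the explicit form yields $\alpha\beta=\beta\alpha$ for $\beta\in G_1,G_2$ without ever assuming normality. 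Your Bézout argument for $|K|\mid\deg(\cC)$ is essentially the paper's, but it too only becomes available after that computation, since it presupposes that $K$ consists of homologies with a common centre off the curve.

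On the final assertion you candidly leave $K\subseteq S$ open, and your sketch contains a further unjustified step: $G_2K=H_2K$ does not follow from $G_2$ and $H_2$ both being sharply transitive on $\Omega\setminus\{P_2\}$, since two such subgroups need not even induce the same permutation group on $\Omega$. The paper disposes of $K=Z(S)$ in two lines from what precedes: each $Q\in\Omega$ is the unique fixed point in $\Omega$ of a conjugate of $G_1$ lying in $S$, so $Z(S)$ fixes $\Omega$ pointwise and $Z(S)\le K\le Z(G)$, closing the chain $Z(G)\le Z(S)\le K\le Z(G)$. In short, the facts you import are exactly the content this lemma is meant to supply to the rest of the paper, so the proposal does not stand as a proof.
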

\begin{proof}
From (vi) of Lemma \ref{profuka1}, the poles of $x$ are the points of $\Omega$ different from $P_1$, each with multiplicity $1$. Take a non-trivial element  $\alpha\in K$ of order $s$. For any $v\in \mathbb{K}(\cC)$, $\alpha$ takes a pole of $v$ with multiplicity $m$ to a pole of $\alpha(v)$ with the same multiplicity $m$. Therefore, $\alpha(x)$ has the same poles of $x$.

We show that $p$ does not divide $|K|$. By way of a contradiction, assume $s=p$. From Lemma \ref{lemA1jan}, no point $P\in \Omega$ is a pole of $\alpha(x)-x$. Also, no branch of $\cC$ centered at an affine point is a pole of $\alpha(x)-x$. Thus $\alpha(x)-x\in \mathbb{K}$. Similarly, $\alpha(y)-y \in\mathbb{K}$. Therefore, $\alpha$ is a translation, that is, $\alpha(x)=x+a, \alpha(y)=y+b$ for $a,b\in \mathbb{K}$, and it has order $p$. Assume that $\alpha\beta\neq \beta\alpha$ for some $\alpha\in K$ and $\beta\in G_1$. Then $\beta^{-1}\alpha\beta(x)=\beta^{-1}(\alpha(x))=\beta^{-1}(x+a)=\beta^{-1}(x)+\beta^{-1}(a)=x+a.$ Therefore $\alpha^{-1}\beta^{-1}\alpha\beta(x)=x$. Since $\mathbb{K}(x)=\cX^{G_1}$
this yields $\alpha^{-1}\beta^{-1}\alpha\beta\in G_1$. On the other hand $\alpha^{-1}\beta^{-1}\alpha\beta$ fixes $\Omega$ pointwise. Therefore  $\alpha^{-1}\beta^{-1}\alpha\beta$ is the identity but this contradicts $\alpha\beta\neq \beta\alpha$.  Therefore $\alpha$ centralizes $G_1$. As the same holds for $G_2$, $\alpha\in Z(G)$ follows.
 For a translation $\alpha\in K$, let $T$ denote its center. Take a point $P\in \supp(D)$ such that $\varphi(P)$ is different from $T$. Let $\gamma$ be the branch of $\cC$ associated with $P$. Then $\gamma$ is centered at $\varphi(P)$, and its tangent $t$ is different from the line at infinity by (vi) of Lemma \ref{profuka1}. Then $\alpha$ does not leave invariant $t$ and hence $\alpha$ does not fix $P$, a contradiction which shows that $K$ contains no translation. Therefore, $p\nmid |K|$.

 For $p\nmid s$, the same argument may be used. In fact, Lemma \ref{lemA1jan} shows that no point $P\in \Omega$ is a pole of $\alpha(x)-ux$ where $u$ is a non-trivial $m$-th root of unity and $m$ is the smallest integer for which $\alpha^m(x)=x$. Thus $\alpha(x)=ux+b$ with $b\in \mathbb{K}$, and similarly $\alpha(y)=ry+c$ with some $r\in \mathbb{K}$. Since $\alpha$ fixes a point $\varphi(Q)\in \ell_\infty$ other than $\varphi(P_1)$ and $\varphi(P_2)$, $\alpha$ is a homology. Therefore $u=r$ and the center of $\alpha$ is in the point $(-b/(u-1),-c/(u-1))$. From this, $\alpha\beta=\beta\alpha$,  and hence $K\leq Z(G)$ follows.
 As before, for a point $P\in \supp(D)$, let $\gamma$ be the branch of $\cC$ associated with $P$, centered at $\varphi(P)$, and  with tangent $t$ different from the line at infinity. Then the homology $\alpha$ leaves $t$ invariant, and hence $t$ passes through the center of $\alpha$. This shows that the tangents to the branches of $\cC$ arising from the points in $\supp(D)$ are concurrent at the center of $\alpha$. Furthermore,
 since any group generated by two homologies with different centers contains a translation, it turns out that $K$ consists of homologies with the same center $C$. In particular, $K$ is isomorphic to a finite multiplicative subgroup of $\mathbb{K}$. Therefore, $K$ is cyclic and $p\nmid |K|$. Since $G_1$ fixes $\varphi(P_1)=Y_\infty$ and $Y_\infty$ is a simple point of $\cC$, the tangent to $\cC$ at $Y_\infty$ contains no point of $\cC$ other than $Y_\infty$. Therefore $C$ is not a point of $\cC$. Take a line $\ell$ through $C$ and disjoint from $\Omega$ such that $\ell$ intersects $\cC$ in non-singular points. From every $K$-orbit $\Delta_j$ in $\ell\cap \cC$, take a unique point $R_j$. Then for the intersection divisor $\cC\circ \ell$,  B\'ezout's theorem gives $\deg(\cC)=\deg(\cC\circ \ell)=\sum_j |\Delta_j| I(R_j,\cC\cap \ell)$. Also, $|\Delta_j|=|K|$ as no non-trivial element in $K$ fixes a point in $\ell\cap \cC$. From this $|K|$ divides $\deg(\cC)$.

 Finally, since any point in $\Omega$ is the only fixed point of a conjugate of $G_1$ in $G$, $Z(S)$ fixes $\Omega$ pointwise. Therefore $Z(G)\le Z(S)\le K \le Z(G)$ whence $K=Z(G)=Z(S)$. 
\end{proof}

A useful ingredient in the proof of Theorem \ref{th1} is the following result.
\begin{theorem}
\label{cond*} $G_1$ is a normal subgroup of the stabilizer of $P_1$ in $G$.
\end{theorem}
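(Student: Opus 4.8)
The plan is to reduce the normality statement to the invariance of the fixed field $\mathbb{K}(x)=\cX^{G_1}$ under $G_{P_1}$, and then settle that invariance with a single divisor computation. First I would record what is available from Lemma \ref{profuka1}: by (iv) and (vi) the poles of $x$ are exactly the points of $\Omega\setminus\{P_1\}$, each simple, so that $E:=\sum_{Q\in\Omega\setminus\{P_1\}}Q$ is the pole divisor of $x$ with $\deg E=|\Omega|-1=|G_1|$; moreover $G_1\le G_{P_1}$ since $G_1$ fixes $P_1$. The reduction step is the elementary Galois-theoretic remark that, since $G_1=\gal(\mathbb{K}(\cX)/\mathbb{K}(x))$ and $gG_1g^{-1}=\gal\bigl(\mathbb{K}(\cX)/g(\mathbb{K}(x))\bigr)$, the equality $gG_1g^{-1}=G_1$ is equivalent to $g(\mathbb{K}(x))=\mathbb{K}(x)$; here I apply the Galois correspondence for the Galois extension $\mathbb{K}(\cX)/\mathbb{K}(\cX)^{G}$, of which all the relevant subgroups and intermediate fields are part. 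Thus it suffices to prove $g(\mathbb{K}(x))=\mathbb{K}(x)$ for every $g$ in the stabilizer $G_{P_1}$ of $P_1$ in $G$.

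Now fix $g\in G_{P_1}$ and set $c_1:=x(P_1)$, which is finite by (vi). Since $G_1$ fixes $P_1$ and $[\mathbb{K}(\cX):\mathbb{K}(x)]=|G_1|$, the point $P_1$ is the unique point lying over the place $x=c_1$ and is totally ramified there, so $v_{P_1}(x-c_1)=|G_1|$. As the zero divisor of $x-c_1$ has degree $\deg E=|G_1|$, this accounts for all its zeros and gives
\begin{equation*}
\div(x-c_1)=|G_1|\,P_1-E .
\end{equation*}
Because $g$ fixes $P_1$ and permutes $\Omega$, it fixes $c_1$ and leaves $\Omega\setminus\{P_1\}$ invariant, whence $g(P_1)=P_1$ and $g(E)=E$. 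Applying $g$, and using $\div(g(h))=g(\div(h))$, I obtain
\begin{equation*}
\div\bigl(g(x)-c_1\bigr)=g\bigl(\div(x-c_1)\bigr)=|G_1|\,P_1-E=\div(x-c_1).
\end{equation*}
Hence $(g(x)-c_1)/(x-c_1)$ has trivial divisor and equals some $\lambda\in\mathbb{K}^*$, so $g(x)=\lambda x+c_1(1-\lambda)\in\mathbb{K}(x)$. Applying the same to $g^{-1}$ gives $g(\mathbb{K}(x))=\mathbb{K}(g(x))=\mathbb{K}(x)$, and the reduction step then yields $G_1\trianglelefteq G_{P_1}$.

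The step I expect to be the main obstacle is pinning down $\div(x-c_1)$ exactly, i.e. checking that $P_1$ is totally ramified over $x=c_1$ and is the \emph{sole} zero of $x-c_1$; this is precisely where the hypotheses that $G_1$ fixes $P_1$ and acts sharply transitively on $\Omega\setminus\{P_1\}$ (forcing $|G_1|=\deg E$) are indispensable, and it is exactly the rigidity that makes $x-c_1$ and $g(x)-c_1$ coincide up to a constant. Everything else is formal manipulation of principal divisors under $\aut(\cX)$; the only point requiring care is the action convention $v_P(g(h))=v_{g^{-1}(P)}(h)$, which nevertheless leaves $g(P_1)=P_1$ and $g(E)=E$, and hence the whole conclusion, unaffected.
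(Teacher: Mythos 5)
Your proof is correct, and it takes a genuinely different and notably more elementary route than the paper. The paper proves this theorem by first passing to the permutation group $\bar G=G/K$ induced on $\Omega$, invoking Lemma \ref{prop1jan} to show that the kernel $K$ equals $Z(G)$ and has order prime to $|G_1|$, and then quoting Propositions \ref{pro3jan} and \ref{pro24jan} — which themselves rest on the Holt--O'Nan classification of doubly transitive groups with solvable point stabilizer, Dickson's classification, and the structure of automorphism groups of elliptic curves — to get $G_1K\trianglelefteq G_{P_1}$, finishing with the observation that $G_1$ is characteristic in $G_1\times K$. Your argument bypasses all of that machinery: the only inputs are parts (i), (iv) and (vi) of Lemma \ref{profuka1}, the $G$-invariance of $\Omega$, and standard facts about Galois covers and principal divisors. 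The key identity $\div(x-c_1)=|G_1|P_1-E$ is justified exactly as you anticipate: $P_1$ being fixed by $G_1$ forces total ramification over $x=c_1$, and the degree count $\deg E=|\Omega|-1=|G_1|$ (sharp transitivity of $G_1$ on $\Omega\setminus\{P_1\}$) leaves no room for other zeros; the $G_{P_1}$-invariance of this divisor then pins down $g(x)$ up to an affine substitution, and Artin's theorem converts $g(\mathbb{K}(x))=\mathbb{K}(x)$ into $gG_1g^{-1}=G_1$. What the paper's approach buys is structural information used elsewhere (the explicit list of candidates for $\bar G$ and the identification $K=Z(G)=Z(S)$), so it is not wasted effort in context; what yours buys is a short, self-contained, classification-free proof of the normality statement itself, which even yields the extra fact that $G_{P_1}$ acts on $\cX/G_1\cong\mathbb{P}^1$ by affine maps fixing $x=c_1$ and $x=\infty$.
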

 \begin{proof} By Propositions \ref{pro3jan} and \ref{pro24jan}, $K$ may be assumed to be non-trivial. Let $\bar{G}$ be the doubly transitive permutation group induced by $G$ on $\Omega$. Then  $\bar{G}$ acts on $\Omega$ as $G$ does, and no nontrivial element in $\bar{G}$ fixes $\Omega$ pointwise.
 Propositions \ref{pro3jan} and \ref{pro24jan} apply to the quotient curve $\bar{\cX}=\cX/K$. Therefore, $\bar{G_1}=G_1K/K$ is a normal subgroup of the stabilizer of $\bar{P}_1$ in $\bar{G}$ where $\bar{P_1}$ is the point lying under $P_1$ in the cover $\cX|\bar{\cX}$. Therefore, $G_1K$ is a normal subgroup of the stabilizer of $P_1$ in $G$. From Proposition \ref{prop1jan}, $|K|$ divides $\deg(\cC)=|\Omega|$ and $K=Z(G)$ whereas $|G_1|=|\Omega|-1$ by (iii) of Lemma \ref{profuka1}. Thus $G_1K=G_1\times K$ with $\rm{g.c.d.}(|G_1|,|K|)=1$. Therefore, $G_1$ is a characteristic subgroup of $G_1\times K$, and hence $G_1$ is a normal subgroup of $G_{P_1}$.
\end{proof}

\begin{rem} An alternative proof for Theorem \ref{cond*} can be carried out by using Results \ref{holtre} and \ref{onanre}.
\end{rem}

\section{Proof of Theorem \ref{th1}}
Let $\ell$ denote the line through $\varphi(P_1)$ and $\varphi(P_2)$.

The case where $\varphi(P)$ with $P\in \cX$ lies on $\ell$ only for $P=P_1$ or $P=P_2$ cannot occur since in this case (\ref{eq14mar}) does not hold and hence at least one of the points $\varphi(P_1)$ and $\varphi(P_2)$ of $\cC$ is singular.

From now on we assume that $\varphi(P)\in \ell$ for some $P\in \cX$ other than $P_1$ and $P_2$. Then $|\Omega|>2$ where $\Omega=\supp(D)$.
Theorem \ref{cond*} allows us to apply Result \ref{HC} to the group space $(\Omega,G)$ with $Q=G_1$ where $S$ is the normal closure of $G_1$ in $G$.
\subsection{$S$ is of type (i) in Result \ref{HC}} $S$ is simple for $S=\PSL(2,q), q>3,Sz(q),\PSU(3,q),q>2,Ree(q)$ and Theorem \ref{cond*} applies showing that $q$ is a power of $p$. In the other non-solvable case we have either $S={\rm{SL}}(2,q), q>3$ or  ${\rm{SU}}(3,q), q>2$, and $S$ acts on $\Omega$ as $\PSL(2,q)$, or $\PSU(3,q)$ in their natural $2$-transitive representation. This permutation representation has non-trivial kernel $z$. Thus Theorem \ref{cond*} applies to the quotient curve $\cX/Z$, and it shows that $q$ is a power of $p$.
In the remaining cases, $S$ is one of the solvable groups $\PSL(2,2),\PSL(2,3),{\rm{SL}}(2,2),{\rm{SL}}(2,3),\PSU(3,2),{\rm{SU}}(3,2)$. If either $S=\PSL(2,2)\cong \AGL(1,3)$, or $S=\PSL(2,3)\cong \AGL(1,4)$, or $S=\PSU(3,2)$, the permutation representation of $S$ on $\Omega$ is faithful and sharply doubly transitive. These cases are also of type (iii) in Result \ref{HC} and are treated below; see Subsection \ref{ss1}. Also, $S={\rm{SU}}(3,2)$ falls in case (iv) of Result \ref{HC} and it is investigated later; see Subsection \ref{ssiv}.

We are left with the case $S={\rm{SL}}(2,3)$ (and $|\Omega|=4$). From (iii) of Lemma \ref{profuka1}, $\deg(C)=4$, and hence $\gg(\cX)\leq 3$. We show that $\gg(\cX)=3$.

Since ${\rm{SL}}(2,3)$ is not a subgroup of $\PGL(2,\mathbb{K})$ by Result \ref{lem30oct2016}, $\cX$ is not rational.

Assume that $\cX$ is an elliptic curve $\cE$. We show that $|G\cap J(\cE)|=4$. For any point $Q\in \Omega$, there exists $h\in G$ which takes $P_1$ to $Q$. On the other hand, $J(\cE)$ has a translation $\tau$ taking $Q$ to $P_1$. Then $\tau h$ fixes $P_1$. Since the stabilizer of $P_1$ in $\aut(\cE)$ has order $\le 6$ whereas the stabilizer of $P_1$ in $S={\rm{SL}}(2,3)$ has order $6$, it turns out that every automorphism in $\aut(\cE)$ fixing $P_1$ is in $S$. Therefore, $\tau h$ and hence $\tau$ itself is in $S$ whence $|S\cap J(\cE)|\geq 4$ follows. As
no non-trivial translation fixes a point of $\cE$, this yields $|S\cap J(\cE)|=4$. From  Result \ref{res12feb2019},
$S\cap J(\cE)$ is a normal subgroup of $S$. This contradicts the fact that ${\rm{SL}}(2,3)$ has no normal subgroup of order $4$.

Assume that $\gg(\cX)=2$. The Hurwitz genus formula applied to $G$ yields that $G$ has exactly three short orbits, of length $4,6$ and $12$, respectively. In particular, each point in the orbit of length $12$ is fixed by an involution. Since ${\rm{SL}}(2,3)$ has only one involution $h$, this yields that $h$ has at least $12$ fixed points. This contradicts the fact that no non-trivial automorphism of a genus $\gg$ curve may have more than $2\gg+2$ fixed points; see \cite[Lemma 11.12]{HKT}.

Therefore, $\gg(\cX)=3$ and hence it $\cC$ is a non-singular curve of degree four.

All cases  occur as shown by the examples exhibited in Section \ref{esempi}. Here we observe that $\gg(\cX)\geq 2$ apart from the possibilities where $S\cong \PSL(2,q)$ and $|\Omega|=q+1$, or $S\cong \bA_5$ and $|\Omega|=5$. This follows by comparison of the list in (i) of Result \ref{HC} with Result \ref{lem30oct2016} (for $\gg(\cX)=0$) and with Result \ref{autelliptic} (for $\gg(\cX)=1$).
\subsection{$S$ is of type (ii) in Result \ref{HC}} An example is the smallest Ree curve; see Section \ref{esempi}.
\subsection{$S$ is of type (iii) in Result  \ref{HC}}
\label{ss1} Proposition \ref{pro23jan} applies to $S$, and the possibilities come from Propositions \ref{pro23jan} and \ref{pro23janA}. All cases occur; see Section \ref{esempi}.
\subsection{$S$ is of type (iv) in Result \ref{HC}}
\label{ssiv}
Our goal is to show that $S\cong {\rm{SU}}(3,2)$ and $\gg(\cX)=10$. In case (iv) of Result \ref{HC}, $|Z(S)|=d$ with $|\Omega|=d^2$. Furthermore,
the quotient curve $\tilde\cX=\cX/G_1$ is rational and the quotient group $\tilde{Z}=(Z(S)\times G_1)/G_1$ is a subgroup of $\aut(\tilde{\cX})$ isomorphic to $Z(S)$. Since $Z(S)$ fixes $\Omega$ pointwise whereas $G_1$ has two orbits on $\Omega$, we have that $\tilde{Z}$ has at least two fixed points in $\tilde{\cX}$.
Therefore, $p$ is prime to the order of $\tilde{Z}$, that is, $p\neq d$. Also, $\tilde{Z}$ has no further fixed point. This shows that $\Omega$ coincides with the set of all fixed points of $Z(S)$. Now, look at the quotient curve $\bar{\cX}=\cX/Z(S)$. From the Hurwitz genus formula, $2\gg(\cX)-2=d(2\gg(\bar{\cX})-2)+d^2(d-1)$. Since $\bar{S}=S/Z(S)$ is sharply doubly transitive on $\Omega$, Theorem  \ref{doublysolv} applies to $\bar{\cX}$. Thus, $\bar{\cX}$ is either rational, or elliptic. In the former case, as $d\neq p$, Result \ref{lem30oct2016} yields $\bar{S}\cong {\bf{A}}_4$. This implies $d=2$, a contradiction.

 Therefore, $\bar{\cX}$ is elliptic, and $\gg(\cX)=\ha(d^2(d-1)+2)$. Also, the quotient group $\bar{G}_1=(G_1\times Z(S))/Z(S)$ is a subgroup of $\aut(\bar{\cX})$ fixing the point $\bar{P}_1$ of $\bar{\cX}$ lying under $P_1$ in the cover $\cX|\bar{\cX}$. Since $\bar{G}_1\cong G_1$ and $|G_1|=d^2-1$ with $d\ge 3$, Result \ref{autelliptic} yields $p=2$ and $d=3,5$. For $d=3$, we have $|S|=216$. More precisely, a MAGMA computation shows that either $S\cong {\rm{SU}}(3,2)=SmallGroup(216,88)$, or $S\cong SmallGroup(216,160)$. The latter case cannot actually occur since the $3$-Sylow subgroup of $SmallGroup(216,160)$ is abelian, and hence is not extra-special.

 We are left with the possibility that $p=2$, $d=5$, $\gg(\cX)=51$, and $|S|=3000$. Since $16\nmid 3000$, a Sylow $2$-subgroup $S_2$ of $G_1$ is also a Sylow $2$-subgroup of $S$. Obviously, $S_2$ fixes $P_1$. We show that no non-trivial element in $S_2$ fixes a point other than $P_1$. The quotient group $\bar{S}_2=(Z(S)\times S_2)/Z(S)$ is isomorphic to $S_2$ and it is a subgroup of $\aut(\bar{\cX})$ which fixes $\bar{P}_1$. From Result \ref{autelliptic}, $\bar{S}_2$ (and hence $S_2$) is isomorphic to the quaternion group $Q_8$ of order $8$. The quotient curve $\hat{\cX}=\bar{\cX}/\bar{S}_2$
is rational, and it has zero $2$-rank. From Result \ref{lem11.131HKT}, $\bar{\cX}$ has also zero $2$-rank. Therefore,
no non-trivial element in $\bar{S}_2$ fixes a point of $\cX$ other than $\bar{P}_1$. This yields that $S_2$ fixes $P_1$ but its non-trivial elements fix no point other than $P_1$. To apply the Hurwitz genus formula to $S_2$, compute the ramification groups of $S_2$ at $P_1$. By definition, $S_2=S_2^{(0)}=S_2^{(1)}$. From Result \ref{lem11.75(i)} applied to a generator $\alpha$ of $Z(S)$, we have $S_2^{(1)}=\ldots =S_2^{(5)}$. Since $S_2$ is not an elementary abelian group, (ii) of Result \ref{res74} yields that $S^{(6)}$ is non-trivial. Therefore, $S^{(6)}$ contains the (unique) subgroup $T$ of $S_2$ of order $2$. Since $T$ is in $G_1$ and $G_1$ contains a (cyclic) subgroup $C_{15}$ of order $15$, Result \ref{lem11.75(i)} applies to a generator $\alpha$ of $C_{15}$ whence $S_2^{(i)}$ for contains $T$ for $i=6,\ldots 15$. Let $\cX'=\cX/S_2$. From the Hurwitz genus formula applied to $S_2$,
\begin{equation}
\label{eq13dic2018}
100=2(\gg(\cX)-1)\ge 16(\gg(\cX')-1)+42+10
\end{equation}
whence $\gg(\cX')\le 4$. Moreover, $(C_{15}\times S_2)/S_2\cong C_{15}$ is a subgroup of $\aut(\cX')$ which fixes
the point $P_1'$ lying under $P_1$ in the cover $\cX|\cX'$.

If $\cX'$ is rational, then the subgroup $(Z(S)\times S_2)/S_2\cong C_5$ of $\aut(\cX')$ fixes exactly two points, namely $P_1'$ and $U'$. Therefore, the fixed points of $C_5$ are $P_1$ and some (or all) of the points in the $S_2$-orbit lying over $U'$. This shows that $C_5$ has at most $9<25$ fixed points, a contradiction.

We may assume that $\gg(\cX')\ge 1$. Result \ref{theorem11.60HKT} yields $15\le 4\gg(\cX')+2$ whence $\gg(\cX')=4$. This shows that equality holds in (\ref{eq13dic2018}). In particular, $S_2=S_2^{(i)}$, for $i=0,1,\ldots,5$, and $T=S_2^{(i)}$ for
$i=6,\ldots 15$, and $S_2^{(16)}=\{1\}$. From (\ref{eq1bis}) applied to $G_1$, we have then $d_{P_1}=23+5\cdot 7+
10=68$. Let $C_3$ be the subgroup of $C_{15}$ of order $3$. Then the quotient group $C_3'=(S_2\rtimes C_3)/S_2\cong C_3$ is a subgroup of $\aut(\cX')$. Let $\check{\cX}$ be the quotient curve $\cX'/C_3'$. The Hurwitz genus formula applied to $C_3'$ reads $6=2(\gg(\cX')-1)=6(\gg(\check{\cX})-1)+2r$
where $r$ counts the fixed points of $C_3'$. Here $r\ge 1$ as $C_3'$ fixes $P_1'$. From this, $\gg(\check{\cX})\le 1$, and $r=3$ or $r=6$ according as $\check{\cX}$ is elliptic or rational. The former case cannot actually occur by Result \ref{autelliptic}, since
$(Z(S)\times (S_2\rtimes C_3))/(S_2\rtimes C_3)\cong C_5$ is a subgroup of $\aut(\check{\cX})$ fixing the point lying under the point $P_1$ in the cover $\cX|\check{\cX}$. Therefore, $\check{\cX}$ is rational, and $r=6$.
Take a fixed point $U'$ of $C_3'$ other than
$P_1'$ and consider the $S_2$-orbit $\Delta$ lying over $U'$. Since $C_3$ leaves $\Delta$ invariant, and $|\Delta|=8$, $C_3$ has at least two fixed points in $\Delta$. Therefore, $C_3$ has at least $12$ fixed points. Moreover, $G_1$ has four (pairwise conjugate) subgroups of order $3$. Now, the Hurwitz genus formula applied to $G_1$ reads,
$100=2(\gg(\cX)-1)\ge -48+68+4\cdot24=116$ a contradiction.

\subsection{$S$ coincides with $G$} By way of a contradiction, assume that some non-trivial element $g\in G_2$ does not belong to $S$. Since $S$ is a normal subgroup of $G$, $g$ is in the normalizer of $Z(S)$. Let $\bar{S}=S/Z(S)$ and $\bar{g}=gZ(S)/Z(S)$. We show that $\bar{g}\not\in Z(\bar{S})$. Assume on the contrary that $gsg^{-1}s^{-1}\in Z(S)$ for every $s\in S$. Since $Z(S)$ fixes $\Omega$ pointwise,  this yields $gs(P_2)=sg(P_2)=s(P_2)$. As $P_2$ is the unique fixed point of $g$, it follows $s(P_2)=P_2$, a contradiction $S$ being  transitive on $\Omega$. Therefore, $\bar{g}$ induces by conjugation a non-trivial automorphism of $\bar{S}$.

If $S$ is of type (i) in Result \ref{HC} then  $d-1$ a power of $p$ and $\bar{S}$ is isomorphic to one of the groups $L=\PSL(2,q),\, \PSU(3,q),\,Sz(q),\,Ree(q)$, and the action of $\bar{S}$ on $\Omega$ is the natural doubly transitive permutation representation of $L$. If $L=\PSL(2,q)$ then $L$ together with $\bar{g}$ generate a subgroup $D$ of ${\rm{P}}\Gamma L(2,q)$ strictly containing $\PSL(2,q)$. From (iii) of Result \ref{res74}, the stabilizer $M$ of $P_2$ in $D$ is the semidirect product of the Sylow $q$-subgroup of $D$ fixing $P_2$ by a cyclic complement. Now the second claim in Lemma \ref{lem11jan} yields that $D\leqq L$, a contradiction. Similar arguments can be used to investigate the other possibilities for $L$ where Lemma \ref{lem11jan} by Lemmas \ref{lem11Ajan}, \ref{lem11Cjan}, and \ref{lem11Bjan}, respectively.

If $S$ is of type (ii) in Result \ref{HC} then $S\cong {\rm{P\Gamma L}}(2,8)=\aut({\rm{P\Gamma L}}(2,8))\cong \aut(S)$, and hence $\bar{g}\in\bar{S}$, a contradiction.

If $S$ is of type (iii) in Result \ref{HC} then $\cX$ is either rational, or elliptic and one of the cases in Propositions \ref{pro23jan} and \ref{pro23janA} occurs. Let $N$ be the (unique) minimal normal subgroup of $S$. Then $N$ is a characteristic subgroup of $S$, and hence it is a minimal normal subgroup of $G$. Furthermore, $G_1N\le S$ is a sharply doubly transitive group on $\Omega$. Thus $S=G_1N$. Since $S\leq G$, either $G=S$, or $G>S$ and Lemma \ref{pro23janA} shows that $G_1N$ is the unique sharply doubly transitive subgroup of $G$ on $\Omega$. Since $G_2N$ is another sharply doubly transitive subgroup of $G$ on $\Omega$, this yields $G_2\leq S$, that is $G=S$.

If $S$ is of type (iv) in Result \ref{HC} then $S\cong {\rm{SU}}(3,2)$ and hence $\bar{S}\cong \PSU(3,2)$. Also, $\aut(\PSU(3,2))\cong {\rm{P\Gamma U}}(3,2)$, and every involution in ${\rm{P\Gamma U}}(3,2)\setminus
\PSU(3,2)$ has more than one fixed points. Again, $\bar{g}$ cannot be one of them, a contradiction.

\section{Examples for Theorem \ref{th1}}
\label{esempi}
For each group $G$ listed in Theorem \ref{th1} we exhibit an example of a plane curve with two different internal Galois points $P_1$ and $P_2$ both simple. These example arise from  automorphism groups satisfying (I),(II),(III) via Lemma \ref{profuka1}. We keep our notation used in Theorem \ref{th1}.

\subsection{Case (i)} We show that the curves on which $G$ acts naturally provide examples. All but the second examples on the Hermitian curve are known and they can be found in some recent papers of Fukasawa and his coauthors; see \cite{fuka7,fukaHiga1,FukaHiga2}.  We refer to those papers for the proofs of (I),(II),(III).
\subsubsection{Hermitian Curve}
 \label{exher}
 Let $q=p^h$. The Hermitian curve (also called the Deligne-Lusztig curve of unitary type) $\cX$ is  the non-singular plane curve $\cC$ of genus $\ha q(q-1)$ given by the affine equation $x^{q+1}+y^{q+1}+1=0$; see \cite[Section 12.3]{HKT}. Furthermore, $\PSU(3,q)$ is isomorphic to a subgroup $G$ of $\aut(\cX)\cong \PGU(3,q)$ which acts on the set $\Omega$ of all $\mathbb{F}_{q^2}$-rational points of $\cX$ as doubly transitive permutation group. Here $|\Omega|=q^3+1>2$, and the stabilizer of $P\in \Omega$ in $G$ contains a normal subgroup $N_P$ which acts on  $\Omega\setminus\{P\}$ as a sharply transitive permutation group, and $P$ is a Galois point of $\cC$ with Galois group $N_P$. For any two distinct points $P_1,P_2\in \Omega$, define $G_1=N_{P_1}$ and $G_2=N_{P_2}$. The subgroup $G=\langle G_1,G_2 \rangle$ is isomorphic $\PSU(3,q)$, and  $G$ is in turn the normal closure of $G_1$ in $G$.

 Another example arises from the Hermitian curve if $G$ is taken as the centralizer of an involution of $\aut(\cX)$ which is the subgroup of $\aut(\cX)$ preserving a chord $\ell$ of $\Omega$. Here $G\cong {\rm{SL}}(2,q)$ (and $\PSL(2,q)$ for even $q$). For any two distinct points $P_1,P_2\in \Omega\cap \ell$, define $G_i$ to be the subgroup fixing $P_i$. Then Conditions (II),(III) are satisfied. To show (I) the sequence of the ramification groups $G_1^{(i)}$ at $P_1$ is useful. From \cite[Lemma 12.1(e)]{HKT},  $G_1=G_1^{(0)}=G_1^{(1)}=\ldots =G_1^{(q)}$ whereas $G_1^{(q+1)}=\{1\}$. From the Hurwitz genus formula applied to $G_1$, $(q+1)(q-2)=2\gg(\cX)-2=q(2\gg(\cX/G_1)-2)+(q+1)(q-1)$, whence $\gg(\cX/G_1)=0$. Similarly, for $G_2$.
Moreover, $G=\langle G_1,G_2\rangle$, and $G$ is the normal closure of $G_1$ in $G$.

\subsubsection{Roquette Curve} Let $q=p^h>3$ with odd prime $p$. The Roquette curve $\cX$ is  the non-singular model of the irreducible (hyperelliptic) plane curve $\cC$ of genus $\ha (q-1)$ given by the affine equation $x^q-x=y^2$. Then either $\PSL(2,q)$ or ${\rm{SL}}(2,q)$ (according as $q\equiv 1 \pmod 4$ or $q\equiv -1 \pmod 4$) is isomorphic to a subgroup of $\aut(\cX)$ which acts on the set $\Omega$ of all $\mathbb{F}_{q^2}$-rational points of $\cX$ as a doubly transitive permutation group isomorphic to $\PSL(2,q)$.

\subsubsection{Suzuki Curve} Let $p=2,\ q_0=2^s,$ with $s\geq 0$ and $q=2q_0^2 = 2^{2s+1}$. The Suzuki curve (also called the Deligne-Lusztig curve of Suzuki type) $\cX$ is the non-singular model of the irreducible plane  curve $\cC$ of genus $q_0(q-1)$ given by the affine equation $x^{2q_0}(x^q+x)=y^q+y$; see \cite[Section 12.2]{HKT}. The Suzuki group $Sz(q)$ is isomorphic to a subgroup $G$ of $\aut(\cX)$ which acts on the set $\Omega$ of all $\mathbb{F}_{q^2}$-rational points of $\cX$. Here $|\Omega|=q^2+1>2$. 

\subsubsection{Ree Curve} Let $p=3$, $q=3q_0^2,$ with $q_0=3^s,\, s\geq 2.$ The Ree curve (also called the Deligne-Lusztig curve of Ree type) $\cX$ is the non-singular model of the irreducible  plane curve $\cC$ of genus $\frac{3}{2}q_0(q-1)(q+q_0+1)$ given by the  affine equation $y^{q^2}-[1+(x^q-x)^{q-1}]y^q+(x^q-x)^{q-1}y-x^q(x^q-x)^{q+3q_0}=0$; see \cite[Section 12.4]{HKT} Let $s\ge 2$. The Ree group $Ree(q)$ is isomorphic to a subgroup $G$ of $\aut(\cX)$ which acts on the set $\Omega$ of all $\mathbb{F}_{q^2}$-rational points of $\cX$ as a doubly transitive permutation group. 
\subsubsection{GK curve} Let $q=p^{3r},$ with $r\ge 1$. The GK curve is the non-singular model of the irreducible plane curve $\cC$ of genus $\ha\,(n^3+1)(n^2-2)+1$ given by the affine equation $y^{q+1}-(x^q+x)+(x^n+x)^{n^2-n+1}=0$ where $n=p^r$, see \cite{giuko}. Moreover, ${\rm{SU}}(3,n)$ is isomorphic to a subgroup of $\aut(\cX)$ which acts on the set $\Omega$ of the $n^3+1$  $\mathbb{F}_{q}$-rational points of $\cX$ as a doubly transitive permutation group. 

\subsection{Case (ii)} Let $p=3$. The Ree curve $\cX$ with $s=1$ provides an example. Indeed, $\rm{P\Gamma L}(2,8)$ is isomorphic to a subgroup $G$ of $\aut(\cX)$ which acts on the set $\Omega$ of the $28$ $\mathbb{F}_{q^2}$-rational points of $\cX$ as a doubly transitive permutation group.
\subsection{Cases (iii)} The basic tool is Result \ref{lem30oct2016}.
\subsubsection{Case {\rm{(iiia)}}} Let $m=p^h$. The rational curve $\cC$ with homogeneous equation $yz^{m-1}=x^m-xz^{m-1}$ is an example with $G\cong \AGL(1,m)$. To show this, observe that the non-singular points of $\cC$ defined over $\mathbb{F}_m$ are those lying on the $X$-axis, and they coincide with the points $P_u=(u,0,1)$ with $u\in \mathbb{F}_m$. For every non-zero $\lambda\in \mathbb{F}_m$ the transformation $w$ with $w(x)=\lambda x$, $w(y)=\lambda y$ is in $\aut(\cX)$ and preserves every line through $P_0$. They form a subgroup $G_1$ of order $m-1$ fixing $P_0$. Therefore, $P_0$ is a Galois point with Galois group $G_1$. The transformation $\tau$ with $\tau(x)=x-z$, $\tau(y)=y$ is in $\aut(\cX)$, and $G_2=\tau^{-1}G_1\tau$ is a subgroup of order $m-1$ fixing $P_1$. Therefore, $P_1$ is also a Galois point with Galois group $G_2$. Furthermore, $G_1\cap G_2=\{1\}$ and $G=\langle G_1,G_2\rangle\cong {\rm{AGL}}(1,m)$. Earlier reference for this example is \cite{fukahase}.
\subsubsection{Case {\rm{(iiib)}}}
 \label{m3}Let $p\neq 3$. The rational curve $\cC$ with  equation of degree $3$ provides an example with $G\cong {\rm{AGL}}(1,3)$. To show this, for a subgroup $G\cong {\rm{AGL}}(1,3)$, take an involution $\alpha\in G$. Let $P\in \cX$ be one of the fixed points of $\alpha$. Then the orbit $\Omega$ of $P$ in $G$ has size $3$. In $G$, take two distinct subgroups $G_1$ and $G_2$ of order $2$. Let $P_i$ with $i=1,2$ be the fixed point of $G_i$. Then conditions (I), (II) and (III) are satisfied. Therefore $P_i$ is an inner Galois point of $\cX$ with Galois group $G_i$.
\subsubsection{Case {\rm{(iiic)}}} Let $p\neq 2$. The quartic curve $\cC$ with homogeneous equation $x^2y^2+y^2z^2+z^2x^2=0$ is rational. For a primitive third root of unity $\varepsilon\in\mathbb{K}$, the cubic transformation $\alpha_1$ with $\alpha_1(x)=y,$ $\alpha_1(y)=z,$ $\alpha_1(z)=x$ is in $\aut(\cC)$ and fixes the point $P_1=(1:\varepsilon:\varepsilon^2)$. Also, the involution $\beta$ with $\beta(x)=x,$ $\beta(y)=-y,$ $ \beta(z)=z$  is in $\aut(\cC)$, and takes $P_1$ to the point $P_2=(1:-\varepsilon:\varepsilon^2)$. Therefore, $\alpha_2=\beta\alpha_1\beta\in \aut(\cC)$ is a cubic transformation such that $\alpha_2(x)=-y$, $\alpha_2(y)=-z$, $\alpha_2(z)=x$ and $\alpha_2(P_2)=P_2$. Let $G_i=\langle \alpha_i \rangle$  for $i=1,2$. Then $G=\langle G_1,G_2\rangle\cong {\rm{AGL}}(1,4)$, and Condition (I),(II), (III) are satisfied, and $|\Omega|=4>2$. Therefore,  $P_1$ and $P_2$ are Galois points with Galois groups $G_1$ and $G_2$, respectively. Plane quartic curves with two Galois points are investigated in \cite{fuka8}, where examples for Case (iiic) are also found.
\subsection{Cases (iv)} We show a general procedure relying on Lemma \ref{lemA10feb2019} which provides examples for $p\nmid m$. Let $\cE$ be an elliptic curve.  For a prime $r$ different from $p$, the translations in $\aut(\cE)$ associated to the $r$-torsion points together with the identity transformation form an elementary abelian subgroup $R$ of $\aut(\cE)$ of order $r^2$. In $\aut(\cE)$, the Jacobian subgroup $J(\cE)$  of $\aut(\cX)$ consisting of all translations of $\cE$ is abelian, and hence $R$ is the unique elementary abelian subgroup of $J(\cE)$. Since $J(\cE)$ is a normal subgroup of $\aut(\cE)$, this shows that $R$ is also a normal subgroup of $\aut(\cX)$. For a point $P_1\in \cE$ let $\Omega$ be the $R$-orbit of $P_1$, and $G_1$ the stabilizer of $P_1$ in $\aut(\cE)$.
For a non-trivial element $\alpha \in R$, the point $P_2=\alpha(P_1)$ is fixed by $G_2=\alpha^{-1}G_1\alpha$. Therefore, conditions (I) and (II) are satisfied. Moreover, Lemma \ref{lemA10feb2019} shows that no non-trivial element in $G_1$ fixes a point of $\Omega$ other than $P_1$. Therefore,  (III) holds with $\supp(D)=\Omega$ if and only if $|G_1|=r^2-1$.
If this is the case then
$G=\langle G_1,G_2 \rangle$ is sharply doubly transitive on $\supp(D)$, and, from Result \ref{sharply} and subsequent discussion, either $G\cong {\rm{AGL}}(1,r^2)$, or $G\cong {\rm{A\gamma L}}(1,r^2)$, or $G$ arises from an irregular nearfield.
This together with  Result \ref{autelliptic} provide an example with $m=4,9,25$; more precisely $\rm{AGL}(1,4)$ for $p\neq 2$, and ${\rm{A\gamma L}}(1,9)$, and $(C_5\times C_5)\rtimes {\rm{SL}}(2,3)$ for $p=2$. Therefore, Conditions (I), (II) and (III) are satisfied, and examples for (iva),(ivb),(ivc),(ivd),(ive) are obtained from (i),(ii) and of Proposition \ref{pro23janA}, respectively.

\subsection{Case (va)}
Let $p=2$. The $GK$ curve $\cC$ has genus $10$ and defined over $\mathbb{F}_{8}$ with homogeneous equation $z^9+x^8y+xy^8+(x^2y+xy^2)^3=0$.  $\cC$ has two Galois points $P_1=(0:1:0)$ and $P_2=(1:0:0)$  with Galois groups $G_1\cong G_2$. Here $G=\langle G_1,G_2\rangle\cong \SU(3,2)$ and $G_1$ is the Sylow $2$-subgroup of $P_1$ isomorphic to the quaternion group. Earlier reference of this example is \cite{FukaHiga2}.
\subsection{Case (vb)}
Let $p \neq 2,3$. The non-singular plane quartic $\cC$ of equation $X^4+Y^4+YZ^3=0$ has four internal Galois points, two of them are $P_1=(0:0:1)$  and $P_2=(0:-1:1)$. The group $G$ generated by the respective Galois groups is isomorphic to ${\rm{SL}}(2,3)$. Earlier reference of this example is \cite{my}.

\vspace{0.5cm}\noindent {\em Authors' addresses}:

\vspace{0.2cm}\noindent G\'abor KORCHM\'AROS, Stefano LIA and Marco TIMPANELLA\\ Dipartimento di
Matematica, Informatica ed Economia\\ Universit\`a degli Studi  della Basilicata\\ Contrada Macchia
Romana\\ 85100 Potenza (Italy).\\E--mail: {\tt
gabor.korchmaros@unibas.it}, {\tt stefano.lia@unibas.it} and {\tt marco.timpanella@unibas.it}.

\begin{thebibliography}{99}
\bibitem{acc} R.D.M. Accola, {\emph{Topics in theory of Riemann surfaces}}, Lecture Notes in Math. {\bf{1595}} Springer Verlag 1994.
\bibitem{cam} P.J. Cameron, Finite permutation groups and finite simple groups, \emph{Bull. London Math. Soc.} {\bf{13}} (1981), 1-22.
\bibitem{fuka1} S. Fukasawa, Complete determination of the number of Galois points for a smooth plane curve, \emph{Rend. Sem. Mat. Univ. Padova} {\bf{129}} (2013), 93-113.
\bibitem{fuka2} S. Fukasawa, Galois points for a non-reflexive plane curve of low degree, \emph{Finite Fields Appl.} {\bf{23}} (2013), 69-79.
\bibitem{fukas} S. Fukasawa, A birational embedding of an algebraic curve into a projective plane with two Galois points, \emph{J. Algebra} {\bf{511}} (2018), 95-101.
\bibitem{fuka3} S. Fukasawa, An upper bound for the number of Galois points for a plane curve, in ``Topics in Finite Fields''. \emph{Contemp. Math.} {\bf{632}} Amer. Math. Soc., 2015, pp. 111-119.
\bibitem{fuka7} S. Fukasawa, Birational embeddings of the Hermitian, Suzuki and Ree curves with two Galois points.
{\emph{Finite Fields Appl.}} {\bf{57}} (2019), 60-67.
\bibitem{fuka8} S. Fukasawa, Rational curves of degree four with two inner Galois points, arXiv:1511.02598.
\bibitem{fukahase} S. Fukasawa and T. Hasegawa, Singular plane curves with infinitely many Galois points, \emph{J. Algebra} {\bf{323}} (2010), 10-13.
\bibitem{fukaHiga1} S. Fukasawa and K. Higashine, A birational embedding with two Galois points for certain Artin-Schreier curves, \emph{Finite Fields Appl.} {\bf{52}} (2018), 281-288.
\bibitem{FukaHiga} S. Fukasawa and K. Higashine, A birational embedding with two Galois points for quotient curves, arXiv:1809.01777.
\bibitem{FukaHiga2} S. Fukasawa and K. Higashine, Galois lines for the Giulietti-Korchmáros curve. \emph{Finite Fields Appl.} {\bf{57}} (2019), 268–275.
\bibitem{fukamiura} S. Fukasawa and K. Miura, Galois points for a plane curve and its dual curve, \emph{Rend. Sem. Mat. Univ. Padova} {\bf{132}} (2014), 61-74.
\bibitem{fukasp} S. Fukasawa and P. Speziali, Plane curves possessing two outer Galois points, 	arXiv:1801.03198.
\bibitem{giuko} M.~Giulietti and G. Korchm\'aros, A new family of maximal curves, \emph{Math. Ann.} (2009) {\bf{343}}, 229-245.
\bibitem{gural} R.M. Guralnick, Frobenius groups as monodromy groups, \emph{J. Aust. Math. Soc.} {\bf{85}}, (2008), 191-196.
14H30 (14D05 14H05)
\bibitem{HY} H. Hayashi and H. Yoshihara, Galois Group at each point for some self-dual curves, \emph{Hindawi Publishing Corporation Geometry} (2013).
\bibitem{her} C. Hering, A theorem on group spaces, \emph{Hokkaido Math. J.} {\bf{8}} (1979), 115-120.
\bibitem{HKT} J.W.P. Hirschfeld, G. Korchm\'aros, and F. Torres, {\it Algebraic Curves over a Finite Field}, Princeton Series in Applied Mathematics, Princeton University Press, Princeton, NJ, 2008. xx+696 pp.
\bibitem{holt} D.F. Holt, Doubly transitive groups with a solvable one point stabilizer, \emph{J. Algebra} {\bf{44}}, (1977) 29-92.
\bibitem{hom} M. Homma, Galois points for a Hermitian curve, \emph{Comm. Algebra} {\bf{34}} (2006), 4503-4511.
\bibitem{huppertI1967} B.~Huppert, \emph{Endliche Gruppen. I}, Grundlehren der Mathematischen Wissenschaften {\bf 134}, Springer, Berlin, 1967, xii+793 pp.
\bibitem{huppert3} B.~Huppert and B.N.~Blackburn, \emph{Finite Groups. III}, Grundlehren der Mathematischen Wissenschaften {\bf 243}, Springer, Berlin, 1982, ix+454 pp.
\bibitem{miu2} K. Miura, Galois points for plane curves and Cremona transformations, \emph{J. Algebra} {\bf{320}} (2008), 987-995.
\bibitem{miu} K. Miura, Galois points on singular plane quartic curves, \emph{J. Algebra} {\bf{287}} (2005), 283-293.
\bibitem{my} K. Miura and H. Yoshihara, Field theory for function fields of plane quartic curves, \emph{J. Algebra} {\bf{226}} (2000), 283-294.
\bibitem{onan} M. O'Nan, Doubly transitive groups of odd degree whose one point stabilizers are local, \emph{J. Algebra} {\bf{39}}, (1976), 440-482.
\bibitem{serre1979} J.-P.~Serre, \emph{Local Fields}, Graduate Texts in
Mathematics {\bf 67}, Springer, New York, 1979. viii+241 pp.
\bibitem{silverman2009} J.H.~Silverman, \emph{The arithmetic of elliptic curves},
Second edition, Graduate Texts in Mathematics, {\bf{106}} Springer, Dordrecht, 2009. xx+513 pp
\bibitem{stichtenoth1973I} H.~Stichtenoth,
\"Uber die Automorphismengruppe eines algebraischen
Funktionenk{\"o}rpers von Primzahlcharakteristik. I. Eine
Absch{\"a}tzung der Ordnung der Automorphismengruppe, \emph{Arch.
Math.} {\bf 24} (1973), 527--544.
\bibitem{stichtenoth1973II} H.~Stichtenoth, \"Uber die Automorphismengruppe eines algebraischen
Funktionenk{\"o}rpers von Primzahl- charakteristik. II. Ein
spezieller Typ von Funktionenk{\"o}rpern, \emph{Arch. Math.} \textbf{24} (1973), 615--631.
\bibitem{stichtenoth1993} H.~Stichtenoth, \emph{Algebraic Function Fields and Codes} Springer Verlag.
\bibitem{maddenevalentini1982} R.C.~Valentini and M.L.~Madan,  A
Hauptsatz of L.E. Dickson and Artin--Schreier extensions, \emph{J.
Reine Angew. Math.} {\bf 318} (1980), 156--177.
\bibitem{sullivan1975} F.~Sullivan,  $p$-torsion in the class
group of curves with many automorphisms, \emph{Arch. Math.} {\bf 26}
(1975), 253--261.
\bibitem{wil} R.A. Wilson, \emph{The Finite Simple Groups}, Springer Verlag.
\bibitem{yu} H. Yoshihara, Function field theory of plane curves by dual curves, \emph{J. Algebra} {\bf{239}} (2001), 340-355.
\bibitem{yu1}  H. Yoshihara, Rational curve with Galois point and extendable Galois automorphism, \emph{J. Algebra} {\bf{321}} (2009), 1463-1472.

\end{thebibliography}
\end{document}